\theoremstyle{plain}
\def\cal@symb#1|#2{\expandafter\def\csname #2#1\endcsname{\mathcal{#1}}}
\def\calsymbols#1#2{\@for\@tmpz:=#2\do{\expandafter\cal@symb\@tmpz|{#1}}}
\def\dmth@p#1|{\expandafter\let\csname#1\endcsname\relax
  \expandafter\DeclareMathOperator\csname#1\endcsname{#1}}
\def\operators#1{\@for\@tmpz:=#1\do{\expandafter\dmth@p\@tmpz|}}
\newcommand{\chara}{\operatorname{char}}
\newtheorem{theorem}{Theorem}[section]
\newtheorem{corollary}[theorem]{Corollary}
\newtheorem{lemma}[theorem]{Lemma}
\newtheorem{proposition}[theorem]{Proposition}
\newtheorem{definition}[theorem]{Definition}
\newtheorem{example}[theorem]{Example}
\theoremstyle{definition}
\newtheorem{remark}[theorem]{Remark}
\title{\textbf{The Bernstein center in natural characteristic}}
\author{Konstantin Ardakov, Peter Schneider}
\date{\today}
\address{Mathematical Institute, University of Oxford, Oxford OX2 6GG}
\email{ardakov@maths.ox.ac.uk}
\urladdr{http://people.maths.ox.ac.uk/ardakov/}
\address{ Universit\"at M\"unster,  Mathematisches Institut,  Einsteinstr. 62, 48291 M\"unster, Germany}
\email{pschnei@wwu.de}
\urladdr{https://www.uni-muenster.de/Arithm/schneider/}
\begin{document}

\begin{abstract}
Let $G$ be a locally profinite group and let $k$ be a field of positive characteristic $p$. Let $Z(G)$ denote the center of $G$ and let $\mathfrak{Z}(G)$ denote the Bernstein center of $G$, that is, the $k$-algebra of natural endomorphisms of the identity functor on the category of smooth $k$-linear representations of $G$. We show that if $G$ contains an open pro-$p$ subgroup but no proper open centralisers, then there is a natural isomorphism of $k$-algebras $\mathfrak{Z}(Z(G)) \xrightarrow{\cong} \mathfrak{Z}(G)$. We also describe $\mathfrak{Z}(Z(G))$ explicitly as a particular completion of the abstract group ring $k[Z(G)]$. Both conditions on $G$ are satisfied whenever $G$ is the group of points of any connected smooth algebraic group defined over a local field of residue characteristic $p$. In particular, when the algebraic group is semisimple, we show that $\mathfrak{Z}(G) = k[Z(G)]$.
\end{abstract}

\maketitle

\section{Introduction}

Let $G$ be a locally profinite group and let $k$ be any field. Recall that a $k$-linear representation $V$ of $G$ is said to be \emph{smooth} if every vector in $V$ is fixed by an open subgroup of $G$. The smooth $k$-linear representations of $G$ form an abelian category $\Mod(G)$ and the \emph{Bernstein center} $\mathfrak{Z}(G)$ of $\Mod(G)$ is by definition the ring of natural endomorphisms of the identity functor on $\Mod(G)$: it is naturally a commutative $k$-algebra which acts on every $V$ in $\Mod(G)$ by $k$-linear endomorphisms commuting with the action of $G$ of $V$.

When $G$ is a reductive group over a local non-archimedean field of mixed characteristic $(0,p)$ and $k$ is the complex numbers, the Bernstein center $\mathfrak{Z}(G)$ was studied in detail by Bernstein in \cite{Ber}. He found that $\mathfrak{Z}(G)$ decomposes as a direct product of smaller algebras that are now called the \emph{Bernstein components}, and he found a parametrisation of these components in terms of $G$-conjugacy classes of cuspidal pairs on the Levi subgroups of $G$. Consequently, in this classical case, the Bernstein center is rather large, and it plays a fundamental role in the classical local Langlands correspondence \cite{Hen}.

Recently, there has been a lot of interest in a possible mod-$p$ local Langlands correspondence, where instead one works with smooth representations defined over a field $k$ of characteristic $p$; see, for example, \cite{Vig04}, \cite{BP}, \cite{Sch} and \cite{BH+}.  It is therefore natural to enquire about the structure of $\mathfrak{Z}(G)$. There has been an expectation that in this case $\mathfrak{Z}(G)$ is small, which we will confirm in this paper by a precise explicit calculation.

We begin the description of our results by introducing the following completion of the group algebra $k[Z(G)]$ of the center $Z(G)$ of $G$:
\[ \widehat{k[Z(G)]} := \varprojlim \, k[Z(G)/Z']\]
where $Z'$ runs over all compact open subgroups of $Z(G)$.  By considering the natural action of $k[Z(G)]$ on the subspace $V^U$ of $U$-fixed points of a given object $V$ in $\Mod(G)$ as $U$ ranges over all compact open subgroups of $G$, it is not difficult to see that the algebra $\widehat{k[Z(G)]}$ acts by $k[G]$-linear endomorphisms on every $V$ in $\Mod(G)$ in a natural way. This defines a $k$-algebra homomorphism $\Phi_G : \widehat{k[Z(G)]} \to \mathfrak{Z}(G)$ which is easily seen to be injective. Using this map we can now state our main result.

\begin{theorem}\label{thm:MainIntro}
Let $G$ be a locally profinite group that contains an open pro-$p$ subgroup and let $k$ be a field of characteristic $p$. Suppose that $G$ contains no proper open centralisers. Then the natural map $\Phi_G : \widehat{k[Z(G)]} \xrightarrow{\cong} \mathfrak{Z}(G)$ is an isomorphism.
\end{theorem}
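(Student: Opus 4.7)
Injectivity of $\Phi_G$ is routine and is already alluded to in the introduction. The substance of the theorem is surjectivity: given $\zeta \in \mathfrak{Z}(G)$, the plan is to construct $\hat z \in \widehat{k[Z(G)]}$ with $\Phi_G(\hat z) = \zeta$ by evaluating $\zeta$ on universal test representations. Using that compact open pro-$p$ subgroups $U \subseteq G$ are cofinal among all compact open subgroups, for each such $U$ consider the compactly induced representation $M_U := \ind_U^G k$, which is generated over $G$ by $\mathbf{1}_U \in M_U^U$. The endomorphism $\zeta_{M_U}$ is then determined by the single value $h_U := \zeta_{M_U}(\mathbf{1}_U) \in M_U^U$. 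Naturality of $\zeta$ applied to $G$-equivariant self-maps of $M_U$ forces $\zeta_{M_U}$ to lie in the centre of $\End_G(M_U)$; equivalently, $h_U$ is a central element of the Hecke algebra $\mathcal{H}(G,U)$, viewed as a compactly supported $U$-biinvariant function $G \to k$.

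For any compact open $V \subseteq U$, the canonical $G$-surjection $M_V \twoheadrightarrow M_U$ sends $\mathbf{1}_V$ to $\mathbf{1}_U$, and naturality of $\zeta$ applied to this map yields the averaging identity
\[ h_U(gU) \;=\; \sum_{v \in U/V} h_V(gvV) \quad \text{for every } g \in G. \]
The central technical claim will be that $\operatorname{supp}(h_U) \subseteq Z(G)\cdot U$ for every compact open pro-$p$ subgroup $U$. Granting this, each $h_U$ defines an element of $k[Z(G) \cdot U / U] \cong k[Z(G)/(Z(G) \cap U)]$, the averaging identity is precisely the transition condition defining $\widehat{k[Z(G)]} = \varprojlim_{Z'} k[Z(G)/Z']$, and the family $(h_U)_U$ assembles to an element $\hat z \in \widehat{k[Z(G)]}$. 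The identity $\Phi_G(\hat z) = \zeta$ then follows because every $W \in \Mod(G)$ is generated over $G$ by $\bigcup_U W^U$, and by Frobenius reciprocity both $\Phi_G(\hat z)$ and $\zeta$ act on $W^U$ via Hecke convolution by $h_U$.

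The main obstacle is the support claim, and my plan for it is as follows. Let $g \in G$ with $g \notin Z(G)$, so by hypothesis $C_G(g)$ is not open in $G$. The continuous map $f_g : U \to G$, $u \mapsto g^{-1}ugu^{-1}$, has $f_g^{-1}(\{1\}) = C_U(g)$, which is not open in $U$; hence $f_g^{-1}(V) \subsetneq U$ for every sufficiently small open $V \subseteq U$. For such $V$ open normal in $U$, the $U$-orbit of $gV$ under conjugation on $G/V$ has size $[U : f_g^{-1}(V)]$, a strictly positive power of $p$ (since $U$ is pro-$p$ and $f_g^{-1}(V)$ is open in $U$). Centrality of $h_V$ in $\mathcal{H}(G,V)$ forces $h_V$ to be invariant under the $N_G(V)$-conjugation action on $G/V$, hence constant on each such orbit; reorganising the averaging sum over $v \in U/V$ by $U$-conjugation orbits, every contributing orbit has $p$-power size $\geq p$, and since $\chara k = p$ each orbit contribution vanishes, yielding $h_U(gU) = 0$.

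The most delicate piece is verifying that this reorganisation captures the entire sum, since $gU/V$ is not $U$-conjugation-stable in $G/V$ unless $g \in N_G(U)$. My plan to handle this is to prove the support claim inductively along a decreasing cofinal chain $V_n \trianglelefteq U$ of open normal subgroups of $U$: at each stage the compatibility between $h_{V_n}$ and $h_{V_{n+1}}$ reduces the support question at level $n$ to an orbit-vanishing statement at level $n+1$, where the pro-$p$ hypothesis on $U$ and the characteristic-$p$ hypothesis on $k$ cooperate to force each relevant orbit sum to vanish.
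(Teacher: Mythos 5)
Your reduction is correct and pleasingly direct: it bypasses both Positselski's theorem and the Mackey/bimodule machinery the paper uses (computing $\End_{\Mod(G \times G)}(C^\infty_c(G,k))$ via the twisted fixed-point spaces $k[[U]]_w$), working instead with the Hecke algebras $\mathcal{H}(G,U)$ and the averaging identity. The extraction of $h_U \in Z(\mathcal{H}(G,U))$ from $\zeta$, the assembly of the compatible family $(h_U)_U$ into $\hat z \in \widehat{k[Z(G)]}$ once the support claim is granted, and the verification that $\Phi_G(\hat z) = \zeta$ via Hecke convolution on each $W^U$ are all sound. What each approach buys is different: the paper's route gives the stronger identification of $\mathfrak{Z}(G)$ with the bi-equivariant endomorphism ring, while yours is more elementary and gets straight at surjectivity. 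However, your proof of the support claim $\operatorname{supp}(h_U) \subseteq Z(G)\cdot U$ has a genuine gap which the proposed induction does not close.

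When you reorganise $h_U(gU) = \sum_{v \in U/V} h_V(gvV)$ by orbits, the correct invariance group is not $U$ acting by conjugation on $G/V$ (as you say, $gU/V$ is not $U$-conjugation-stable), but $U_g := U \cap gUg^{-1}$ acting on $U/V$ via the twisted conjugation $(u, xV) \mapsto (g^{-1}ug)xu^{-1}V$; centrality of $h_V$ makes $xV \mapsto h_V(gxV)$ invariant for this action. Since $U$ is pro-$p$ and $k$ has characteristic $p$, orbits of size $>1$ contribute zero, and one is left with $h_U(gU) = \sum_{xV \in F_V} h_V(gxV)$, a sum over the set $F_V$ of \emph{fixed points} of the twisted $U_g$-action on $U/V$. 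Your claim that ``every contributing orbit has $p$-power size $\geq p$'' is only verified for the base coset $gV$ (whose stabiliser is $f_g^{-1}(V)$), not for the other cosets $gvV$, and the fixed-point condition $(gx)u(gx)^{-1}u^{-1} \in V$ for all $u \in U_g$ is a congruence rather than an equality, so $F_V$ can perfectly well be nonempty at every finite level even when $g \notin Z(G)U$. The proposed induction therefore has no base case: proving $h_{V_n}(g) = 0$ just regresses to a support statement about $h_{V_{n+1}}$ on the fixed-point locus, indefinitely. The missing ingredient is a compactness argument: the $F_{V_n}$ form an inverse system of finite sets under the quotients $U/V_{n+1} \to U/V_n$, so if every $F_{V_n}$ were nonempty then $\varprojlim_n F_{V_n} \neq \emptyset$ would produce an honest $x \in U$ with $(gx)u(gx)^{-1}u^{-1} = 1$ for all $u \in U_g$; then $C_G(gx) \supseteq U_g$ is open and the hypothesis forces $gx \in Z(G)$, i.e.\ $g \in Z(G)U$, a contradiction. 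This is precisely the burden carried in the paper by Prop.\ \ref{FixedPoint} together with Lemma \ref{lem:centraliser} and Prop.\ \ref{prop:twistedDelta}, and the Example at the end of \S\ref{sec:FixPtsPMM} shows the compactness step is unavoidable: finite orbits can persist at every finite quotient level without surviving to a finite orbit in the limit, so no purely level-by-level argument can succeed.
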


We show in Thm.\ \ref{thm:open-cent} below that the assumptions on $G$ are satisfied whenever $G=\mathbf{G}(\mathfrak{F})$ for some connected smooth algebraic group $\mathbf{G}$ defined over a local nonarchimedean field $\mathfrak{F}$ of residue characteristic $p$. By applying Theorem \ref{thm:MainIntro} to $G$ and to $Z(G)$ in turn, we obtain the following

\begin{corollary}
Under the assumptions of Theorem \ref{thm:MainIntro}, the natural map
\[\Phi_G \circ \Phi_Z^{-1} : \mathfrak{Z}(Z(G)) \xrightarrow{\;\cong\;} \mathfrak{Z}(G)\]
is an isomorphism of $k$-algebras.
\end{corollary}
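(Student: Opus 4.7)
The strategy is straightforward: apply Theorem~\ref{thm:MainIntro} once to $G$ and once to its center $Z(G)$, and then compose the two resulting isomorphisms. All the substantive content is already contained in Theorem~\ref{thm:MainIntro}, so the corollary amounts to checking that its hypotheses are inherited by $Z(G)$.

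The first step is to verify those hypotheses for $Z(G)$. Since $G$ is Hausdorff, $Z(G) = \bigcap_{g \in G} \{x \in G : xg = gx\}$ is a closed subgroup of $G$, and is therefore itself locally profinite in the subspace topology. If $U \leq G$ is an open pro-$p$ subgroup of $G$, then $Z(G) \cap U$ is open in $Z(G)$ and, being a closed subgroup of the pro-$p$ group $U$, is itself pro-$p$; so $Z(G)$ contains an open pro-$p$ subgroup. Finally, $Z(G)$ is abelian, so the centraliser in $Z(G)$ of any element equals $Z(G)$, and in particular $Z(G)$ has no proper open centralisers.

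With the hypotheses confirmed, Theorem~\ref{thm:MainIntro} applied to $Z(G)$ produces an isomorphism $\Phi_Z : \widehat{k[Z(Z(G))]} \xrightarrow{\cong} \mathfrak{Z}(Z(G))$. Since $Z(G)$ is abelian we have $Z(Z(G)) = Z(G)$ as topological groups, hence $\widehat{k[Z(Z(G))]} = \widehat{k[Z(G)]}$. Theorem~\ref{thm:MainIntro} applied to $G$ itself gives $\Phi_G : \widehat{k[Z(G)]} \xrightarrow{\cong} \mathfrak{Z}(G)$, and composing yields the desired isomorphism $\Phi_G \circ \Phi_Z^{-1} : \mathfrak{Z}(Z(G)) \xrightarrow{\cong} \mathfrak{Z}(G)$. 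There is no substantive obstacle in this argument: the only subtlety worth recording is the identification $Z(Z(G)) = Z(G)$, which is what makes the composition well-defined in the first place and which is immediate from $Z(G)$ being abelian.
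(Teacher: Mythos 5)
Your proof is correct, and it matches the strategy announced in the paper's introduction (``By applying Theorem~\ref{thm:MainIntro} to $G$ and to $Z(G)$ in turn''). Your verification that $Z(G)$ inherits the hypotheses is accurate: $Z(G)$ is closed in $G$ so $Z(G) \cap U$ is an open, closed subgroup of the pro-$p$ group $U$ and hence pro-$p$, and an abelian group trivially has no proper open centralisers. The observation that $Z(Z(G)) = Z(G)$ is exactly what makes the composition well-defined.

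That said, the paper's actual detailed proof in the body takes a lighter route. Rather than invoking the full strength of Theorem~\ref{thm:MainIntro} for the abelian group $Z(G)$, the paper proves directly (Prop.~\ref{prop:commutative}) that $\Phi_Z : \widehat{k[Z]} \to \mathfrak{Z}(Z)$ is an isomorphism for \emph{any} locally profinite abelian group $Z$ and \emph{any} field $k$, by reducing to the discrete case where $C_c(Z,k)$ is free of rank one over $k[Z]$. This direct argument does not require the characteristic-$p$ hypothesis and does not use the machinery of twisted conjugation actions and fixed points in profinite permutation modules that drives the main theorem. In fact, Prop.~\ref{prop:commutative} and the injectivity half of the composite (Cor.~\ref{cor:Z-G}) are established before the main theorem is even stated, so the corollary drops out immediately once Theorem~\ref{thm:Main} is in hand. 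Your argument is logically sound but uses a considerably heavier tool twice where the paper uses it once; this is harmless here because the hypotheses already give characteristic $p$, but it is worth noting that the invertibility of $\Phi_Z$ is a more elementary fact than your proof makes it appear.
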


Here is a brief summary of the contents of this paper. In $\S \ref{sec:Bcenter}$, we use a result of Positselski \cite{Pos} to show that $\mathfrak{Z}(G)$ is naturally isomorphic to the $k$-algebra of bi-equivariant $k$-linear endomorphisms $\End_{\Mod(G \times G)}(C^\infty_c(G,k))$ of the space $C^\infty_c(G,k)$ of compactly supported locally constant $k$-valued functions on $G$. In $\S \ref{sec:univ-endo-ring}$, we use the fact that $C^\infty_c(G,k)$ is isomorphic in $\Mod(G)$ to the compact induction from $U$ to $G$ of the corresponding space of locally constant $k$-valued functions on some fixed open pro-$p$ subgroup $U$ of $G$, together with a version of the Mackey decomposition, to reduce the calculation of the bi-equivariant endomorphism algebra to the calculation of the fixed points in some profinite permutation modules $k[[U]]$ under a twisted conjugation action of various open subgroups $U_w$ of $U$. In $\S \ref{sec:FixPtsPMM}$ we crucially use the hypothesis that $k$ has characteristic $p$ to show that in this situation, all fixed points in such profinite permutation modules arise from the \emph{finite} orbits. In $\S \ref{sec:twisted-action}$ we use the hypothesis on open centralisers in $G$ to show that there are no such finite $U_w$-orbits in $U$ --- unless $w$ happens to be central in $G$ --- and deduce Theorem \ref{thm:MainIntro} which reappears as Theorem \ref{thm:Main}. We also explain how to verify the hypothesis on open centralisers in the case of connected algebraic groups. Finally, in \S \ref{sec:PosThmNewPf} we give a self-contained proof of Positselski's Theorem, for the convenience of the reader unfamiliar with the categorical machinery in \cite{Pos}.

We thank F.\ Pop and M.-F.\ Vign\'eras for pointing out that Prop.\ \ref{prop:density} is known to the experts and for indicating an argument. After this paper was finished, A.\ Dotto has informed us that he had proved (using different methods) that for certain $p$-adic reductive groups, the Bernstein center is a local ring when one fixes a central character.

The second author acknowledges support from Deutsche Forschungsgemeinschaft (DFG, German Research Foundation) under Germany's Excellence Strategy EXC 2044 –390685587, Mathematics Münster: Dynamics–Geometry–Structure.

\section{Notation}

Throughout $G$ is a locally profinite group, and $k$ is any field. If $G$ contains an open pro-$p$ subgroup then we call $G$ locally pro-$p$. By $\Op_c(G)$ we denote the set of open compact subgroups of $G$. Furthermore, $\Mod(G)$, resp.\ $\Vect$, denotes the abelian category of all smooth $G$-representations in $k$-vector spaces, resp.\ of all $k$-vector spaces. As usual, $Z(G)$ is the center of $G$. Similarly, $Z(R)$, for a ring $R$, is the center of $R$. For any topological space $X$ we let $C_c^\infty(X,k)$ denote the $k$-vector space of $k$-valued locally constant functions with compact support on $X$. If $X$ is compact, resp.\ discrete, we simply write $C^\infty(X,k)$, resp.\ $C_c(X,k)$. For a compact open subset $C \subseteq X$ let $\chara_C \in C_c^\infty(X,k)$ denote the characteristic function of $C$. For any group $\Gamma$ acting on a set $Y$ and any subset $P$ of $Y$, we write $\Gamma_P$ to denote the stabiliser of $P$ in $\Gamma$.

\section{The Bernstein center of $\Mod(G)$}\label{sec:Bcenter}

The Bernstein center $\mathfrak{Z}(G)$ of the abelian category $\Mod(G)$ by definition is the ring of natural endomorphisms of the identity functor on $\Mod(G)$. It obviously is a commutative $k$-algebra, which acts functorially on any object in $\Mod(G)$. In order to describe it in more down to earth terms we observe that the group $G \times G$ acts smoothly on $C_c^\infty(G,k)$ by ${^{(g_1,g_2)} F}(-) := F(g_1^{-1} - g_2)$. In the following we write $G_\ell$, resp.\ $G_r$, and correspondingly $g_\ell$ and $g_r$, if we refer to the action of the left, resp.\ right, factor $G$, and correspondingly $g \in G$.

We first make the larger $k$-algebra $\mathfrak{A}(G)$ of all natural endomorphisms of the forgetful functor $\Mod(G) \rightarrow \Vect$ more explicit. An element of $\mathfrak{A}(G)$, of course, is a family of $k$-linear endomorphisms $T_V$ for any $V$ in $\Mod(G)$ which commute, in an obvious sense, with any morphism in $\Mod(G)$. Any $g \in G$ via its action on the $V$ can be seen as an element $(g_V)_V \in \mathfrak{A}(G)$. Viewing $C_c^\infty(G,k)$ as an object in $\Mod(G)$ via the $G_\ell$-action we consider the $k$-algebra homomorphism
\begin{align*}
  \mathfrak{A}(G) & \longrightarrow \End_k(C_c^\infty(G,k)) \\
          (T_V)_V & \longmapsto T_{C_c^\infty(G,k)} \ .
\end{align*}
Observing that the $G_r$-action on $C_c^\infty(G,k)$ is by $G_\ell$-equivariant maps we see that any $T_{C_c^\infty(G,k)}$ has to commute with this $G_r$-action. The above map therefore is, in fact, a map
\begin{equation*}
  \Theta : \mathfrak{A}(G) \longrightarrow \End_{\Mod(G_r)}(C_c^\infty(G,k)) \ .
\end{equation*}
We then have the following result due to Positselski (\cite{Pos} Prop.\ 3.6(a)). See section \ref{sec:PosThmNewPf} for an alternative proof.

\begin{proposition}\label{prop:Pos}
   The map $\Theta$ is an isomorphism.
\end{proposition}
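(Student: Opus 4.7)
The plan is to prove injectivity and surjectivity of $\Theta$ separately. Injectivity is straightforward: if $T_{C_c^\infty(G,k)} = 0$, then for $v \in V^U$ with $U \in \Op_c(G)$, I will invoke naturality of $T$ with respect to (i) the $G_\ell$-equivariant inclusion $C_c(G/U,k) = C_c^\infty(G,k)^{U_r} \hookrightarrow C_c^\infty(G,k)$, and (ii) the $G_\ell$-equivariant ``Frobenius'' map $\alpha_v^U : C_c(G/U,k) \to V$ sending $\chara_{gU} \mapsto gv$, to conclude $T_V(v) = \alpha_v^U(T_{C_c(G/U,k)}(\chara_U)) = 0$.

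For surjectivity, given $\varphi \in \End_{\Mod(G_r)}(C_c^\infty(G,k))$, naturality forces the definition $T_V(v) := \alpha_v^U(\varphi(\chara_U))$ for $v \in V^U$; this makes sense because $\varphi(\chara_U) \in C_c(G/U,k)$ by $G_r$-equivariance of $\varphi$. The main obstacle will be well-definedness: for $U' \subseteq U$ also fixing $v$, one needs $\alpha_v^{U'}(\varphi(\chara_{U'})) = \alpha_v^U(\varphi(\chara_U))$, and a direct comparison fails because $\alpha_v^{U'}|_{C_c(G/U,k)} = [U:U']\,\alpha_v^U$, whose normalising factor may vanish in characteristic $p$.

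I will resolve this by introducing the transfer operator $p := \sum_{u \in U/U'} u_r$, which is a $G_\ell$-equivariant endomorphism of $C_c^\infty(G,k)$ restricting to a map $C_c(G/U',k) \to C_c(G/U,k)$. The three crucial observations are: $p$ sends $\chara_{U'}$ to $\chara_U$ (by decomposing $U$ as $\sqcup_u U' u^{-1}$); $\alpha_v^U \circ p = \alpha_v^{U'}$ as $G_\ell$-maps out of the cyclic $G_\ell$-module $C_c(G/U',k)$ (they agree on the generator $\chara_{U'}$); and $p$ commutes with $\varphi$ since $\varphi$ commutes with each $u_r$. Chaining these three identities gives
\[
\alpha_v^{U'}(\varphi(\chara_{U'})) = \alpha_v^U(p(\varphi(\chara_{U'}))) = \alpha_v^U(\varphi(p(\chara_{U'}))) = \alpha_v^U(\varphi(\chara_U)),
\]
establishing well-definedness. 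This is the only step where $G_r$-equivariance of $\varphi$ is genuinely used to constrain its behaviour across different compact open subgroups.

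The remaining verifications will be routine: $k$-linearity of $T_V$ reduces to $\alpha_{c_1 v_1 + c_2 v_2}^U = c_1 \alpha_{v_1}^U + c_2 \alpha_{v_2}^U$; naturality of $(T_V)_V$ in $V$ reduces to $\alpha_{f(v)}^U = f \circ \alpha_v^U$ for $G_\ell$-maps $f : V \to V'$; and the identity $\Theta(T) = \varphi$, applied to $V = C_c^\infty(G,k)$ and $F \in V^{U_\ell}$, is checked by decomposing $F$ as a linear combination of right translates $(g_j^{-1})_r \chara_U$ and invoking $G_r$-equivariance of $\varphi$ one last time to see that both $\alpha_F^U(\varphi(\chara_U))$ and $\varphi(F)$ expand to the same double sum $\sum_{i,j} c_i d_j\, \chara_{g_i U g_j}$.
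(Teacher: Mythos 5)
Your proof is correct, but it takes a genuinely different route for surjectivity than the paper does. The paper proceeds by a categorical bootstrap: after establishing that $\End_{\Mod(G_\ell)}(C_c^\infty(G,k))$ and $\End_{\Mod(G_r)}(C_c^\infty(G,k))$ are mutual centralisers inside $\End_k(C_c^\infty(G,k))$ (their Lemma~\ref{lem:A1}) and that every Hecke morphism $C_c(G/U_1,k) \to C_c(G/U_2,k)$ extends to a $G_\ell$-endomorphism of $C_c^\infty(G,k)$ (their Lemma~\ref{lem:A2}), it extends the given $\varphi$ step by step to a natural endomorphism on successively larger full subcategories of $\Mod(G)$: first the single object $C_c^\infty(G,k)$, then the objects $C_c(G/U,k)$, then arbitrary direct sums of these, and finally arbitrary quotients. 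The independence of the choice of presentation is handled by factoring through the direct sum of two presentations. Your approach short-circuits all of this: you define $T_V(v) := \alpha_v^U(\varphi(\chara_U))$ directly via Frobenius reciprocity and concentrate the entire difficulty into a single well-definedness check as $U$ shrinks, which you dispatch with the transfer operator $p = \sum_{u \in U/U'} u_r$. Since $p$ restricted to $C_c(G/U',k)$ is independent of the choice of coset representatives and commutes with $\varphi$ (each $u_r$ does), the chain
\[
\alpha_v^{U'}(\varphi(\chara_{U'})) = \alpha_v^U\bigl(p\,\varphi(\chara_{U'})\bigr) = \alpha_v^U\bigl(\varphi(p\,\chara_{U'})\bigr) = \alpha_v^U(\varphi(\chara_U))
\]
is valid, and crucially works over any coefficient field because no index $[U:U']$ is ever inverted. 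Your remaining verifications (linearity, naturality, $\Theta(T)=\varphi$) do reduce to the identities you state, so the argument is complete. The trade-off: the paper's bootstrap is heavier machinery but arguably more robust (it isolates the two structural lemmas and then runs a formal extension argument); your version is shorter, more explicit, and makes visible exactly where $G_r$-equivariance is used, namely to ensure $\varphi(\chara_U) \in C_c(G/U,k)$ and to make $\varphi$ commute with the transfer. The injectivity arguments are essentially identical in both.
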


In fact we will only need the injectivity of $\Theta$ the proof of which is easy.

Obviously $\mathfrak{Z}(G) = \{(T_V)_V \in \mathfrak{A}(G) : (T_V \circ g_V)_V = (g_V \circ T_V)_V \; \text{for any $g \in G$}\}$. But as a consequence of Prop.\ \ref{prop:Pos} the condition on the right hand side only needs to be verified on $C_c^\infty(G,k)$. This implies the following description.

\begin{corollary}\label{cor:B}
   $\mathfrak{Z}(G) = \End_{\Mod(G \times G)}(C_c^\infty(G,k))$.
\end{corollary}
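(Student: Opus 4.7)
The plan is to use $\Theta$ to transport the characterization of $\mathfrak{Z}(G)$ as a subalgebra of $\mathfrak{A}(G)$ onto the bi-equivariant endomorphism algebra on the right. By definition, an element $T = (T_V)_V \in \mathfrak{A}(G)$ lies in $\mathfrak{Z}(G)$ iff $T \circ g = g \circ T$ holds in $\mathfrak{A}(G)$ for every $g \in G$, where $g$ denotes the family $(g_V)_V$. The crucial observation is that this family itself lies in $\mathfrak{A}(G)$, because every morphism $\phi\colon V \to W$ in $\Mod(G)$ is $G$-equivariant and hence satisfies $\phi \circ g_V = g_W \circ \phi$. Consequently the commutator $T \circ g - g \circ T$ is again an element of $\mathfrak{A}(G)$.

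By the injectivity of $\Theta$ from Proposition \ref{prop:Pos}, such a commutator vanishes iff its component at the single object $V = C_c^\infty(G,k)$ vanishes, i.e., iff $\Theta(T)$ commutes with the $G_\ell$-action of $g$ on $C_c^\infty(G,k)$. Letting $g$ run over $G$, this identifies $\Theta(\mathfrak{Z}(G))$ inside $\End_{\Mod(G_r)}(C_c^\infty(G,k))$ with precisely those endomorphisms that are in addition $G_\ell$-equivariant, namely with $\End_{\Mod(G \times G)}(C_c^\infty(G,k))$. The surjectivity of $\Theta$ then upgrades this to a bijection: any bi-equivariant $S$ on the right lifts to some $T \in \mathfrak{A}(G)$ with $\Theta(T) = S$, and such $T$ automatically lies in $\mathfrak{Z}(G)$ by the preceding equivalence.

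The proof is thus purely formal once Proposition \ref{prop:Pos} is granted, and there is no real obstacle to overcome; the substantive content of the argument is the remark that $T \circ g - g \circ T$ is itself a natural endomorphism of the forgetful functor, so that the injectivity of $\Theta$ collapses the ``for all $V$'' centrality condition down to the single test module $C_c^\infty(G,k)$ endowed with its $G_\ell$-action.
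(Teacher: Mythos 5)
Your proof is correct and essentially identical to the paper's: both characterize $\mathfrak{Z}(G)$ inside $\mathfrak{A}(G)$ by the commutation condition with each $g$ and then use the injectivity of $\Theta$ (to collapse the ``for all $V$'' test to the single object $C_c^\infty(G,k)$) together with surjectivity (to hit all of $\End_{\Mod(G\times G)}(C_c^\infty(G,k))$). The only difference is that you make explicit the small observation—that $(g_V)_V$ and hence the commutator $T\circ g - g\circ T$ is itself an element of $\mathfrak{A}(G)$—which the paper leaves implicit in the phrase ``the condition on the right hand side only needs to be verified on $C_c^\infty(G,k)$.''
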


We will now write down some obvious elements in $\mathfrak{Z}(G)$.

\begin{definition}
Let $Z$ be a locally profinite abelian group,  and let $\widehat{k[Z]}$ denote the following completion of $k[Z]$:
\[ \widehat{k[Z]} := \varprojlim_{Z' \in \Op_c(Z)} \, k[Z/Z'].\]
\end{definition}

To get some feeling for this completion, consider the following special case.

\begin{remark}\label{rem:hkZdesc}
  Let $Z$ be a locally profinite abelian group. Suppose there is a compact open subgroup $Z_0$ of $Z$ and a discrete subgroup $A$ of $Z$ such that $Z = A \times Z_0$. Then $\widehat{k[Z]}$ is isomorphic to the usual completed group ring of the profinite group $Z_0$ but with coefficients taken in the discrete $k$-algebra $k[A]$:
\begin{equation*}
  \widehat{k[Z]}  \cong  k[A] [[Z_0]]  := \varprojlim_{Z_1 \in \Op_c(Z_0)} k[A][Z_0/Z_1]
\end{equation*}
If $A$ is finite this simplifies to $\widehat{k[Z]} = k[A] \otimes_k k[[Z_0]]$.
\end{remark}

\begin{example}
Suppose that $Z = \mathfrak{F}^\times$ for some finite extension $\mathfrak{F}$ of $\mathbb{Q}_p$. Fix a uniformiser $\pi \in o_{\mathfrak{F}}$, let $A$ be the subgroup of $Z$ generated by $\pi$ and let $Z_0 := o_{\mathfrak{F}}^\times$. Then $Z, A$ and $Z_0$ satisfy the conditions of Remark \ref{rem:hkZdesc}. Therefore in this case, $\widehat{k[\mathfrak{F}^\times]}$ is the completed group ring of the compact abelian $p$-adic Lie group $o_{\mathfrak{F}}^\times$ with coefficients in the Laurent polynomial ring $k[A] \cong k[\pi, \pi^{-1}]$ in the variable $\pi$.
\end{example}

\begin{lemma}\label{lem:k[[ZG]]action}
   $\widehat{k[Z(G)]}$ acts naturally on every $V \in \Mod(G)$ by $k[G]$-linear endomorphisms.
\end{lemma}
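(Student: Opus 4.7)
The plan is to define the action levelwise using smoothness of $V$, and then verify that it is well-defined, ring-theoretic, $G$-equivariant, and functorial.

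First I would observe that for any $V \in \Mod(G)$ and any $Z' \in \Op_c(Z(G))$, the fixed subspace $V^{Z'}$ is stable under the action of $Z(G)$ (since $Z(G)$ is abelian), and the action of $Z(G)$ on $V^{Z'}$ factors through the quotient $Z(G)/Z'$. Hence $V^{Z'}$ carries a natural structure of $k[Z(G)/Z']$-module. Moreover, since $V$ is smooth and since the collection $\Op_c(Z(G))$ is cofinal in the collection of open subgroups of $Z(G)$, every $v \in V$ lies in $V^{Z'}$ for some $Z' \in \Op_c(Z(G))$; equivalently, $V = \bigcup_{Z'} V^{Z'}$ as a directed union.

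Next, for $\mathbf{a} = (a_{Z'})_{Z'} \in \widehat{k[Z(G)]}$ and $v \in V$, I would define $\mathbf{a} \cdot v := a_{Z'} \cdot v$ for any $Z' \in \Op_c(Z(G))$ fixing $v$. To see this is well-defined, given two choices $Z'_1, Z'_2$ fixing $v$, pick any $Z'_3 \in \Op_c(Z(G))$ with $Z'_3 \subseteq Z'_1 \cap Z'_2$. The defining compatibility of elements of the inverse limit says that $a_{Z'_i}$ is the image of $a_{Z'_3}$ under the canonical surjection $k[Z(G)/Z'_3] \twoheadrightarrow k[Z(G)/Z'_i]$ for $i = 1, 2$; and since $v \in V^{Z'_3} \subseteq V^{Z'_i}$ and the $k[Z(G)/Z'_3]$-action on $V^{Z'_3}$ is the restriction of the $k[Z(G)/Z'_i]$-action under this surjection, we get $a_{Z'_1} \cdot v = a_{Z'_3} \cdot v = a_{Z'_2} \cdot v$. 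Additivity in $v$ and in $\mathbf{a}$, as well as the ring-homomorphism property, follow immediately by choosing a common $Z'$ fixing all vectors in sight and using that each $k[Z(G)/Z']$ acts.

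To check $k[G]$-linearity, let $g \in G$ and $v \in V^{Z'}$. Since $Z' \subseteq Z(G)$ commutes with $g$, we have $gv \in V^{Z'}$; and since the action of $k[Z(G)/Z']$ on $V^{Z'}$ commutes with the $G$-action (because the elements of $Z(G)/Z'$ act through central elements of $G$), we get $\mathbf{a} \cdot (gv) = a_{Z'}(gv) = g(a_{Z'}v) = g(\mathbf{a} \cdot v)$. Finally, naturality in $V$ is automatic: any $G$-equivariant map $f : V \to W$ restricts to $k[Z(G)/Z']$-linear maps $V^{Z'} \to W^{Z'}$, which immediately gives $f(\mathbf{a} \cdot v) = \mathbf{a} \cdot f(v)$.

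There is no real obstacle here; the argument is entirely bookkeeping, and the only slightly subtle point is the well-definedness check, which relies on the compatibility built into the definition of $\widehat{k[Z(G)]}$ as an inverse limit under the surjective transition maps $k[Z(G)/Z'_3] \twoheadrightarrow k[Z(G)/Z'_i]$.
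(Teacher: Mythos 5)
Your proof is correct and follows essentially the same strategy as the paper's: define the action on a directed union of fixed-point subspaces using the inverse-limit structure of $\widehat{k[Z(G)]}$, then verify $k[G]$-linearity via the centrality of $Z(G)$. The only cosmetic difference is that you filter by $V^{Z'}$ for $Z' \in \Op_c(Z(G))$ directly, whereas the paper filters by $V^H$ for $H \in \Op_c(G)$ and passes to $Z(G)\cap H \in \Op_c(Z(G))$; both work because any $v$ fixed by a compact open $H \leq G$ is fixed by $Z(G)\cap H$, so the restriction of $V$ to $Z(G)$ is smooth.
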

\begin{proof} For every compact open subgroup $H$ of $G$, restrict the $Z := Z(G)$-action on $V$ to $V^H = \{v \in V : h \cdot v = v$ for all $h \in H\}$. Since the action of $Z$ on $V$ commutes with the action of $H$, $V^H$ is a $Z$-submodule of $V$. The normal subgroup $Z \cap H$ of $Z$ acts trivially on $V^H$ and therefore the $Z$-action on $V^H$ factors descends to a well-defined action of $Z / ( Z \cap H )$ on $V^H$. By the definition of the completion $\widehat{k[Z]}$, there is a canonical map from this completion to $k[ Z / ( Z \cap H ) ]$, because $Z \cap H$ is a compact open subgroup of $Z$. In this way, every $V^H$ is naturally a $\widehat{k[Z]}$-module, via this homomorphism.  Let $J$ be another compact open subgroup of $G$, contained in $H$. Then $V^H$ is contained in $V^J$, and both are $\widehat{k[Z]}$-modules as explained above. We observe that the inclusion map $V^H \to V^J$ is in fact $\widehat{k[Z]}$-linear.

We have now defined a $\widehat{k[Z]}$-action on $V^H$ for every compact open subgroup $H$ of $G$ and we have checked that these actions are compatible with the inclusions $V^H \to V^J$ whenever $J \subseteq H$ is a smaller compact open subgroup. Therefore there is a well-defined action of $\widehat{k[Z]}$ on $V = \bigcup_H V^H$.

It remains to check that the $\widehat{k[Z]}$-action we have constructed commutes with the given $G$-action on $V$. Fix $x \in \widehat{k[Z]}$, $g \in G$ and $v \in V$. Choose a compact open subgroup $H$ of $G$ such that $v \in V^H$; then $v$ and $g\cdot v$ both lie in $V^J$ where $J := H \cap gHg^{-1} \in \Op_c(G)$. The action of $x$ on $V^J$ is equal to the action of its image $\overline{x}$ in $k[ZJ/J]$. Now $zJ \cdot (g \cdot v) = g \cdot (zJ \cdot v)$ for every $zJ \in ZJ/J$ so $x \cdot (g \cdot v) = \overline{x} \cdot (g \cdot v) = g \cdot (\overline{x} \cdot v) = g \cdot (x \cdot v)$ as required.
\end{proof}
Let $x \mapsto \Phi_V(x) \in \End_{\Mod(G)}(V)$ denote the action of $x \in \widehat{k[Z(G)]}$ on $V \in \Mod(G)$ that was constructed in Lemma \ref{lem:k[[ZG]]action}.
\begin{lemma}\label{lem:PhiMap} The map $x \mapsto \Phi(x) := (\Phi_V(x))_V$ is a $k$-algebra homomorphism
\[\Phi : \widehat{k[Z(G)]} \to \mathfrak{Z}(G).\]
\end{lemma}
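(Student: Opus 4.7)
The plan is to verify two things: (i) that for each $x \in \widehat{k[Z(G)]}$, the family $\Phi(x) = (\Phi_V(x))_V$ is actually a natural transformation of the identity functor on $\Mod(G)$, so that it lies in $\mathfrak{Z}(G)$ (Lemma \ref{lem:k[[ZG]]action} already gives $G$-linearity of each $\Phi_V(x)$, so naturality is all that is missing), and (ii) that the assignment $x \mapsto \Phi(x)$ respects addition, scalar multiplication, the unit, and the product.

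For naturality, let $f : V \to W$ be a morphism in $\Mod(G)$ and fix $v \in V$. Choose $H \in \Op_c(G)$ with $v \in V^H$; then $f(v) \in W^H$ since $f$ is $G$-equivariant. Set $Z := Z(G)$. By construction in Lemma \ref{lem:k[[ZG]]action}, the action of $x$ on $V^H$ and on $W^H$ both factor through the same canonical quotient map $\widehat{k[Z]} \to k[Z/(Z \cap H)]$, landing in the image $\overline{x}$. The restriction $f|_{V^H} : V^H \to W^H$ is in particular $Z$-equivariant, hence $k[Z/(Z\cap H)]$-linear, so $f(\Phi_V(x)v) = f(\overline{x}\cdot v) = \overline{x}\cdot f(v) = \Phi_W(x) f(v)$. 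This shows $\Phi(x) \in \mathfrak{A}(G)$; combined with the $k[G]$-linearity from Lemma \ref{lem:k[[ZG]]action} we obtain $\Phi(x) \in \mathfrak{Z}(G)$.

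For the algebra homomorphism properties, I would again reduce to working on a single fixed-point space $V^H$ for $H \in \Op_c(G)$. Fix $V \in \Mod(G)$ and $x, y \in \widehat{k[Z]}$; for any $v \in V$ pick $H \in \Op_c(G)$ with $v \in V^H$. Then $\Phi_V(x)v$ still lies in $V^H$ (since $\Phi_V(x)$ is $G$-linear, hence $H$-linear), so both $\Phi_V(xy)$ and $\Phi_V(x)\Phi_V(y)$ act on $v$ via the images $\overline{xy} = \overline{x}\cdot\overline{y}$ and $\overline{x}\cdot\overline{y}$ respectively in the honest group algebra $k[Z/(Z\cap H)]$, where they agree tautologically. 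Additivity, $k$-linearity, and the fact that $\Phi_V(1) = \id_V$ follow similarly since each canonical map $\widehat{k[Z]} \to k[Z/(Z\cap H)]$ is a unital $k$-algebra homomorphism, and the action of $k[Z/(Z\cap H)]$ on $V^H$ is the usual module action.

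I do not anticipate any genuine obstacle here; the content is essentially that limits of ring homomorphisms which are compatibly defined on the pieces $V^H$ assemble to a ring homomorphism. The only point requiring any care is keeping track of the fact that every relevant computation on $v \in V$ can be performed inside some $V^H$ where the completed group ring acts through a quotient that is an ordinary group algebra, and this is exactly the content of the previous lemma.
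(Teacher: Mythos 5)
Your proof is correct and takes essentially the same approach as the paper; the paper's own proof is a one-line assertion that the $k[Z(G)]$-action commutes with every $\Mod(G)$-morphism (hence so does the $\widehat{k[Z(G)]}$-action) and that the algebra-homomorphism check is straightforward, and your argument simply fills in the routine details of that claim by reducing to a single fixed-point space $V^H$ where $\widehat{k[Z(G)]}$ acts through the ordinary group algebra $k[Z/(Z\cap H)]$.
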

\begin{proof} The action of $k[Z(G)]$ commutes with every morphism $\varphi : V \to W$ in $\Mod(G)$. This implies that $\Phi_W(x) \circ \varphi = \varphi \circ \Phi_V(x)$ for every $x \in \widehat{k[Z(G)]}$. So $(\Phi_V(x))_V$ does define an element $\Phi(x)$ in $\mathfrak{Z}(G)$. The verification that $\Phi$ is a $k$-algebra homomorphism is straightforward.
\end{proof}

\begin{proposition}\label{prop:Phi_C}
Let $C := C_c^\infty(G,k)$, regarded as an object in $\Mod(G_\ell)$. Then
\begin{enumerate}
\item $\Phi_C : \widehat{k[Z(G)]} \to \End_{\Mod(G_\ell)}(C)$ is injective, and
\item $\im(\Phi_C) \subseteq \End_{\Mod(G \times G)}(C)$.
\end{enumerate}
\end{proposition}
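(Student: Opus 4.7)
My plan is to treat (2) first, since it is a direct consequence of the construction in Lemma \ref{lem:k[[ZG]]action}, and then to prove (1) by testing injectivity on the concrete vectors $\chara_H$.

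For part (2), the point is that the $G_\ell$-action and the $G_r$-action on $C = C_c^\infty(G,k)$ commute by inspection. When we run the recipe of Lemma \ref{lem:k[[ZG]]action} with $C$ viewed as an object of $\Mod(G_\ell)$, the action of $\widehat{k[Z(G)]}$ on each $C^H$ (for $H \in \Op_c(G)$) factors through $k[Z(G)/(Z(G) \cap H)]$ and coincides with the restriction of the left-translation action of $Z(G)$. The $G_r$-action preserves each $C^H$ and commutes with left translation by $Z(G)$, so it commutes with the action of the image $\bar{x}_H \in k[Z(G)/(Z(G)\cap H)]$ of any $x \in \widehat{k[Z(G)]}$. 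Passing to the union $C = \bigcup_H C^H$ shows that $\Phi_C(x)$ commutes with $G_r$, giving (2).

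For part (1), I would test injectivity on the characteristic functions $\chara_H$ of the compact open subgroups $H$ of $G$. We have $\chara_H \in C^H$, and a direct computation gives $z_\ell \cdot \chara_H = \chara_{zH}$. The cosets $\{zH : z \in Z(G)\}$ are in natural bijection with $Z(G)/(Z(G) \cap H)$, and distinct cosets yield linearly independent characteristic functions; hence $\chara_H$ generates a free $k[Z(G)/(Z(G) \cap H)]$-submodule of $C^H$. Consequently, if $\Phi_C(x) = 0$ then the image $\bar{x}_H$ of $x$ in $k[Z(G)/(Z(G) \cap H)]$ must vanish.

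To conclude that $x = 0$, I must show that the family $\{Z(G) \cap H : H \in \Op_c(G)\}$ is cofinal in $\Op_c(Z(G))$; this is the only genuinely non-formal step. Given $Z' \in \Op_c(Z(G))$, I fix some $U \in \Op_c(G)$ and set $Z'' := Z(G) \cap U$, so that $Z'' \setminus Z'$ is compact. For each $z \in Z'' \setminus Z'$ we have $z \neq 1$, so there exists $H_z \in \Op_c(G)$ with $z \notin H_z$; then the coset $zH_z$ is an open neighbourhood of $z$ disjoint from $H_z$. By compactness, finitely many such cosets $z_1H_{z_1},\ldots,z_nH_{z_n}$ cover $Z'' \setminus Z'$, and I propose $H := U \cap \bigcap_i H_{z_i}$: any $z \in Z(G) \cap H$ outside $Z'$ would lie in $Z''$, hence in some $z_i H_{z_i}$, so $z_i^{-1} z \in H_{z_i}$; together with $z \in H \subseteq H_{z_i}$ this forces $z_i = z(z_i^{-1}z)^{-1} \in H_{z_i}$, contradicting the choice of $H_{z_i}$. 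Once cofinality is established, the vanishing of all $\bar{x}_H$ forces $x = 0$ in the inverse limit $\widehat{k[Z(G)]}$.
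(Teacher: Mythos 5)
Your proof is correct and takes essentially the same approach as the paper: both parts hinge on the observation that $z_\ell\cdot\chara_H=\chara_{zH}$, so that $\chara_H$ generates a free rank-one $k[Z(G)/(Z(G)\cap H)]$-submodule of $C^H$ and hence each finite-level quotient of $\widehat{k[Z(G)]}$ acts faithfully; and part (2) is immediate because $z_\ell$ commutes with $G_r$. The one thing you spell out that the paper leaves implicit is the cofinality of $\{Z(G)\cap H : H\in\Op_c(G)\}$ in $\Op_c(Z(G))$, which is indeed needed to pass from the vanishing of each $\bar{x}_H$ to $x=0$ in the projective limit. Your compactness-and-cosets argument for this is correct, but can be shortened: any $Z'\in\Op_c(Z(G))$ is open for the subspace topology on $Z(G)$, hence $Z'=Z(G)\cap W$ for some open neighbourhood $W$ of $1$ in $G$, and since $G$ is locally profinite there is $H\in\Op_c(G)$ with $H\subseteq W$, giving $Z(G)\cap H\subseteq Z'$ directly.
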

\begin{proof}
(1) Write $Z := Z(G)$. It is enough to show that $k[Z / (Z\cap U)]$ acts faithfully on $C^U = C^\infty_c(G,k)^{U_\ell} = C_c(U\backslash G,k)$ for every compact open subgroup $U$ of $G$. For any $x \in G$, consider the characteristic function $\chara_{Ux} \in C_c(U \backslash G,k)$ of the coset $Ux \in U \backslash G$; if $z \in Z$ then ${}^{z_\ell} \chara_{U} = \chara_{zU} = \chara_{Uz}$ shows that $\chara_U$ generates a $k[Z / (Z \cap U)]$-submodule of $C^U$ which is free of rank one. Hence $k[Z / (Z \cap U)]$ acts faithfully on $C^U$.

(2) This follows from the construction of $\Phi_C: \widehat{k[Z]} \to \End_{\Mod(G_\ell)}(C)$: it suffices to see that $\Phi_C(k[Z])$ commutes with the action of $G_r$, but $\Phi_C(z)(f) = {^{z_\ell} f}$ for any $f \in C$ and any $z \in Z$, and  $z_\ell$ commutes with $G_r$.
\end{proof}

We now can easily compute the Bernstein center for commutative groups.

\begin{proposition}\label{prop:commutative}
   If $G = Z$ is commutative then the maps
\begin{equation*}
  \widehat{k[Z]} \xrightarrow[\cong]{\;\Phi\;} \mathfrak{Z}(Z) \xrightarrow[\cong]{\;\Theta\;} \End_{\Mod(Z)}(C_c^\infty(Z,k))
\end{equation*}
are isomorphisms.
\end{proposition}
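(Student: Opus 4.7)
My plan is to exploit commutativity of $Z$ to reduce both isomorphism statements to facts already at hand. First I would observe that when $Z$ is commutative, every action map $z_V : V \to V$ for $z \in Z$ and $V \in \Mod(Z)$ is itself a morphism in $\Mod(Z)$, so that naturality of any $T \in \mathfrak{A}(Z)$ forces $T_V z_V = z_V T_V$ for every $z$; hence $\mathfrak{A}(Z) = \mathfrak{Z}(Z)$. Similarly, the identity $z_\ell = (z^{-1})_r$ on $C := C_c^\infty(Z,k)$ (which holds because $Z$ is abelian) implies $\End_{\Mod(Z_\ell)}(C) = \End_{\Mod(Z_r)}(C) = \End_{\Mod(Z)}(C)$, since $T$ commutes with $z_\ell$ if and only if it commutes with $(z^{-1})_r$. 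Proposition \ref{prop:Pos} then immediately gives that $\Theta$ is an isomorphism.

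Since $\Theta \circ \Phi = \Phi_C$ and $\Theta$ is bijective, it remains to show that $\Phi_C : \widehat{k[Z]} \to \End_{\Mod(Z)}(C)$ is an isomorphism. Injectivity is Proposition \ref{prop:Phi_C}(1), so the task reduces to surjectivity. Given $T \in \End_{\Mod(Z)}(C)$ and $U \in \Op_c(Z)$, I would restrict $T$ to the $Z$-stable subspace $C^U = C_c(Z/U,k)$. Since $T$ is $Z$-equivariant and $C^U$ is a free $k[Z/U]$-module of rank one generated by $\chara_U$, the restriction $T|_{C^U}$ is multiplication by a unique $\beta_U \in k[Z/U]$. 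If the family $(\beta_U)_U$ is compatible under the natural projections $\pi_U : k[Z/U'] \twoheadrightarrow k[Z/U]$ for $U' \subseteq U$, it assembles into an element $\beta \in \widehat{k[Z]}$, and by construction $\Phi_C(\beta)$ agrees with $T$ on every $C^U$, hence on all of $C = \bigcup_U C^U$.

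The main (and only genuine) step is thus the compatibility relation $\pi_U(\beta_{U'}) = \beta_U$, which I would verify by computing $T(\chara_U) \in C^{U'}$ in two ways. On the one hand, expanding $\chara_U = \sum_{yU' \in U/U'} \chara_{yU'}$ and using $Z$-equivariance gives $T(\chara_U) = \sum_{yU' \in U/U'} y \cdot (\beta_{U'} \chara_{U'})$; writing $\beta_{U'} = \sum b_{zU'} zU'$, the coefficient of $\chara_{wU'}$ equals $\sum_{yU' \in U/U'} b_{ywU'}$. On the other hand, restricting $T$ to $C^U$ first yields $T(\chara_U) = \beta_U \chara_U$, and re-expanding via $\chara_{zU} = \sum_{yU' \in U/U'} \chara_{zyU'}$ shows that the coefficient of $\chara_{wU'}$ is just $c_{wU}$, where $\beta_U = \sum c_{zU} zU$. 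Matching gives $c_{wU} = \sum_{yU' \subseteq wU} b_{yU'}$, which is precisely the statement that $\pi_U(\beta_{U'}) = \beta_U$. Everything else in the argument is formal.
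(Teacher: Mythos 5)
Your proof is correct and takes essentially the same approach as the paper: both reduce the problem to the fixed-point subspaces $C^U = C_c(Z/U,k)$, which are free of rank one over $k[Z/U]$, and then assemble the resulting family of elements into $\widehat{k[Z]}$. The only difference is cosmetic --- you verify the compatibility of $(\beta_U)_U$ by an explicit coefficient computation and invoke the full surjectivity part of Positselski's theorem, whereas the paper packages the compatibility in a formal projective-limit identity and deliberately gets by using only the easy injectivity of $\Theta$.
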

\begin{proof}
We already know  from Cor.\ \ref{cor:B} that the right hand map is an isomorphism. But for this proof we only use its injectivity which is very easy to see (cf.\ section \ref{sec:PosThmNewPf}). It therefore suffices to establish that the composed map $\Theta \circ \Phi = \Phi_{C_c^\infty(Z,k)}$ is bijective. We observe that
\begin{align*}
  \End_{\Mod(Z)}(C_c^\infty(Z,k)) & = \Hom_{\Mod(Z)}( \bigcup_{Z' \in \Op_c(Z)} C_c(Z/Z',k), C_c^\infty(Z,k))   \\
                  & = \varprojlim_{Z' \in \Op_c(Z)} \Hom_{\Mod(Z)}(C_c(Z/Z',k), C_c^\infty(Z,k))  \\
                  & = \varprojlim_{Z' \in \Op_c(Z)} \End_{\Mod(Z)}(C_c(Z/Z',k)) \ .
\end{align*}
This shows that $\Phi_{C_c^\infty(Z,k)} = \varprojlim_{Z' \in \Op_c(Z)} \Phi_{C_c(Z/Z',k)}$ and reduces us to the case of a discrete commutative group $Z$. But in this case $C_c(Z,k)$ is a free module of rank one over $k[Z]$.
\end{proof}
\begin{corollary}\label{cor:Z-G}
  The composed map
\begin{equation*}
  \mathfrak{Z}(Z(G)) \xrightarrow[\cong]{\;\ \Phi^{-1}\;} \widehat{k[Z(G)]} \xrightarrow{\ \Phi\ } \mathfrak{Z}(G)
\end{equation*}
between Bernstein centers is an injective homomorphism of $k$-algebras.
\end{corollary}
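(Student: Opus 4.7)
The plan is to assemble the corollary directly from the results already established in this section. Since $Z(G)$ is itself a locally profinite commutative group, I can apply Proposition \ref{prop:commutative} with $Z = Z(G)$ to conclude that the map $\Phi : \widehat{k[Z(G)]} \to \mathfrak{Z}(Z(G))$ is an isomorphism of $k$-algebras, which justifies the notation $\Phi^{-1}$ in the statement and shows that the first leg of the composition is an isomorphism (in particular injective).

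Next I need the injectivity of the second map $\Phi : \widehat{k[Z(G)]} \to \mathfrak{Z}(G)$ from Lemma \ref{lem:PhiMap}. Here I would factor this map as
\begin{equation*}
  \widehat{k[Z(G)]} \xrightarrow{\ \Phi\ } \mathfrak{Z}(G) \xrightarrow{\ \mathrm{ev}_C\ } \End_{\Mod(G_\ell)}(C)
\end{equation*}
where $C = C_c^\infty(G,k)$ and $\mathrm{ev}_C$ is the evaluation at $C$ (well defined because any $(T_V)_V \in \mathfrak{Z}(G) \subseteq \mathfrak{A}(G)$ specialises to a $G_\ell$-equivariant endomorphism of $C$). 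The composite is exactly $\Phi_C$, which is injective by Proposition \ref{prop:Phi_C}(1). Therefore $\Phi$ itself is injective.

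Finally, both individual maps are $k$-algebra homomorphisms (the first by Lemma \ref{lem:PhiMap} applied to $Z(G)$, and the second by Lemma \ref{lem:PhiMap} applied to $G$), so their composition is as well. The composition of an isomorphism of $k$-algebras with an injective $k$-algebra homomorphism is an injective $k$-algebra homomorphism, which yields the corollary. No real obstacle arises: everything reduces to bookkeeping of the two applications of Lemma \ref{lem:PhiMap}/Proposition \ref{prop:commutative} (to $Z(G)$ and to $G$) together with the faithfulness statement in Proposition \ref{prop:Phi_C}(1).
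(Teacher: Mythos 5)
Your proof is correct and matches the approach the paper implicitly intends: the corollary is stated without a separate proof precisely because it follows immediately from Proposition~\ref{prop:commutative} applied to the commutative group $Z(G)$ (giving the isomorphism in the first leg) together with the injectivity of $\Phi_C$ from Proposition~\ref{prop:Phi_C}(1), which forces $\Phi : \widehat{k[Z(G)]} \to \mathfrak{Z}(G)$ to be injective since $\Phi_C$ factors through it by evaluation at $C$. You have identified exactly these two ingredients and the bookkeeping of $k$-algebra structures, so nothing is missing.
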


\section{The universal endomorphism ring}\label{sec:univ-endo-ring}

One of our goals is the computation of the endomorphism ring $\End_{\Mod(G \times G)}(C_c^\infty(G,k))$.

We fix a compact open subgroup $U \subseteq G$. We also choose a set $\cW \subseteq G$ of representatives of the double cosets of $U$ in $G$. For the background for the subsequent points \textbf{A)} to \textbf{D)} see \cite{Vig96} Chap.\ I.5.
\begin{itemize}
  \item[\textbf{A)}] \textbf{Transitivity of induction:} We have the $G_\ell$-equivariant isomorphism
  \begin{align}\label{f:trans-ind}
    \ind_{U_\ell}^{G_\ell}(C^\infty(U,k)) & \xrightarrow{\cong} C_c^\infty(G,k)   \\
                                         \phi & \longmapsto F_\phi(g) := \phi(g)(1) \ .  \nonumber
  \end{align}
  Note that $\phi(g)(u) = F_\phi(gu)$ for $u \in U$. One checks that the $G_r$-action on $C_c^\infty(G,k)$ corresponds under this isomorphism to the $G_r$-action on $ \ind_{U_\ell}^{G_\ell}(C^\infty(U,k))$ given by
  \begin{equation}\label{f:Gr}
    (h,\phi) \longmapsto h_r(\phi)(g)(u) := \phi(guh)(1) \ .
  \end{equation}
  In particular, for $h = v \in U$ we have
\begin{equation*}
  v_r(\phi)(g)(u) = \phi(guv)(1) = {^{(uv)_\ell^{-1}}(}\phi(g))(1) = \phi(g)(uv) = {^{v_r}(}\phi(g))(u)
\end{equation*}
  and hence
\begin{equation}\label{f:Ur}
  v_r(\phi)(g) = {^{v_r}(}\phi(g)) \ .
\end{equation}

  \item[\textbf{B)}] \textbf{Frobenius reciprocity:} We have the $U_\ell$-equivariant embedding
\begin{align*}
  C^\infty(U,k) & \longrightarrow \ind_{U_\ell}^{G_\ell}(C^\infty(U,k))   \\
            f & \longmapsto \iota(f)(g) :=
            \begin{cases}
            {^{g_\ell^{-1}} f} & \text{if $g \in U$},  \\
            0 & \text{otherwise}.
            \end{cases}
\end{align*}
  It gives rise to the isomorphism
\begin{align}\label{f:frob-rec}
  \End_{\Mod(G_\ell)}(\ind_{U_\ell}^{G_\ell}(C^\infty(U,k))) & \xrightarrow{\;\cong\;} \Hom_{\Mod(U_\ell)}(C^\infty(U,k), \ind_{U_\ell}^{G_\ell}(C^\infty(U,k)))  \\
                                                  \alpha & \longmapsto \alpha^\flat := \alpha \circ \iota \ .      \nonumber
\end{align}
  Later on we will be interested only in $G_r$-equivariant $\alpha$'s. The main point being, as it will turn out, the $U_r$-equivariance we note here only the following. For $v \in U$ we compute, using \eqref{f:Ur} in the last equality,
\begin{equation*}
  \iota({^{v_r} f})(g) =
  \begin{cases}
  {^{g_\ell^{-1}}(}{^{v_r}f}) \\
  0
  \end{cases}
  =
  \begin{cases}
  {^{v_r}(}{^{g_\ell^{-1}}f}) \\
  0
  \end{cases}
  = {^{v_r} (} \iota(f)(g)) = v_r(\iota(f))(g) \ ,
\end{equation*}
which means that the map $\iota$ is $U_r$-equivariant. Hence with $\alpha$ also $\alpha^\flat$ is $U_r$-equivariant.

  \item[\textbf{C)}] \textbf{Mackey decomposition:} The disjoint decomposition $G = \dot{\bigcup}_{w \in \cW} UwU$ gives rise to the $U_\ell$-invariant decomposition
\begin{equation*}
  \ind_{U_\ell}^{G_\ell}(C^\infty(U,k)) = \oplus_{w \in \cW} \ind_{U_\ell}^{(UwU)_\ell}(C^\infty(U,k)) \ .
\end{equation*}
  We put $U_w := U \cap w U w^{-1}$. For any $w \in \cW$ the map
\begin{align*}
  \ind_{U_\ell}^{(UwU)_\ell}(C^\infty(U,k)) & \xrightarrow{\;\cong\;} \ind_{(U_w)_\ell}^{U_\ell}(w_* \res_{(U_{w^{-1}})_\ell}^{U_\ell}(C^\infty(U,k)))  \\
                         \phi & \longmapsto \tilde{\phi}(u) := \phi(uw)
\end{align*}
  is a $U_\ell$-equivariant isomorphism.\footnote{This map does depend on the choice of the set $\cW$.} Here the functor $w_*$ sends a $U_{w^{-1}}$-representation $Y$ to the $U_w$-representation $w_* Y := Y$ but on which $U_w$ acts through the homomorphism $u \mapsto w^{-1} u w$. We see that
\begin{align}\label{f:mackey}
  \ind_{U_\ell}^{G_\ell}(C^\infty(U,k)) & \xrightarrow{\;\cong\;} \oplus_{w \in \cW}  \ind_{(U_w)_\ell}^{U_\ell}(w_* C^\infty(U,k))      \\
  \phi & \longmapsto (\phi_w(u) := \phi(uw))_w    \nonumber
\end{align}
  is a $U_\ell$-equivariant isomorphism. Using \eqref{f:Ur} one checks that this map also is $U_r$-equivariant where $U_r$ acts on the target in a way which is induced in each summand by the $U_r$-action on $C^\infty(U,k)$.

  \item[\textbf{D)}] \textbf{Second Frobenius reciprocity:} Let $Y$ be any object in $\Mod(U_w)$. Since $U_w$ is open and hence of finite index in $U$ the $U_w$-equivariant map
\begin{align}\label{f:frob2}
  \ind_{U_w}^U(Y) & \longrightarrow Y \\
                \phi & \longmapsto \phi(1)     \nonumber
\end{align}
  induces the isomorphism
\begin{align*}
 \Hom_{\Mod(U_\ell)}(C^\infty(U,k),\ind_{U_w}^U(Y)) & \xrightarrow{\;\cong\;} \Hom_{\Mod((U_w)_\ell)}(C^\infty(U,k),Y)  \\
                                       \beta & \longmapsto [f \mapsto \beta(f)(1)] \ .
\end{align*}
  For $Y = w_* C^\infty(U,k)$ this becomes the isomorphism
\begin{align}\label{f:frob2iso}
 \Hom_{\Mod(U_\ell)}(C^\infty(U,k),\ind_{(U_w)_\ell}^{U_\ell}(w_* C^\infty(U,k))) & \xrightarrow{\;\cong\;} \Hom_{\Mod((U_w)_\ell)}(C^\infty(U,k),w_* C^\infty(U,k)) \nonumber \\
                                       \beta & \longmapsto [f \mapsto \beta(f)(1)] \ .
\end{align}
  In this case the map \eqref{f:frob2} is visibly $U_r$-equivariant. Hence \eqref{f:frob2iso} respects $U_r$-equivariance. \\
\end{itemize}

The combination of \eqref{f:frob-rec}, \eqref{f:mackey}, and \eqref{f:frob2iso} leads to the following chain of maps:
\begin{equation}\label{f:mapchain}
  \xymatrix{
    \End_{\Mod(G_\ell)}(\ind_{U_\ell}^{G_\ell}(C^\infty(U,k))) \ar[d]^{\cong} \\
    \Hom_{\Mod(U_\ell)}(C^\infty(U,k), \ind_{U_\ell}^{G_\ell}(C^\infty(U,k))) \ar[d]^{\cong} \\
    \Hom_{\Mod(U_\ell)}(C^\infty(U,k), \oplus_{w \in \cW} \ind_{(U_w)_\ell}^{U_\ell}(w_* C^\infty(U,k))) \ar[d]^{\subseteq} \\
    \Hom_{\Mod(U_\ell)}(C^\infty(U,k), \prod_{w \in \cW} \ind_{(U_w)_\ell}^{U_\ell}(w_* C^\infty(U,k))) \ar[d]^{=} \\
    \prod_{w \in \cW} \Hom_{\Mod(U_\ell)}(C^\infty(U,k), \ind_{(U_w)_\ell}^{U_\ell}(w_* C^\infty(U,k))) \ar[d]^{\cong} \\
    \prod_{w \in \cW} \Hom_{\Mod((U_w)_\ell)}(C^\infty(U,k),w_* C^\infty(U,k))  }
\end{equation}
We denote the composite map by $\alpha \longmapsto \Omega(\alpha) = (\Omega_w(\alpha))_{w \in \cW}$. Its explicit description is
\begin{equation}\label{f:Omega}
  \Omega_w(\alpha)(f) = (\alpha \circ \iota(f))(w) \ .
\end{equation}
We have seen already that $\Omega$ respects $U_r$-equivariance. It is clear that the image of $\Omega$ is
\begin{multline}\label{f:imOmega}
  \im(\Omega) =   \\
  \{ (\omega_w)_w : \text{for any $f \in C^\infty(U,k)$ we have $\omega_w(f) = 0$ for all but finitely many $w \in \cW$}\}.
\end{multline}

To reformulate the target of $\Omega$ under the presence of the $U_r$-equivariance condition we first observe that
\begin{multline}\label{f:twisted0}
  \Hom_{\Mod(U_w \times U)}(C^\infty(U,k),w_* C^\infty(U,k)) =  \\
   \{\beta \in \End_{\Mod(U_r)}(C^\infty(U,k)) : \beta({^{u_\ell}(}-)) = {^{(w^{-1}uw)_\ell}(}\beta(-)) \; \text{for any $u \in U_w$}\}.
\end{multline}
The $U_\ell$-action on $C^\infty(U,k)$ extends uniquely to an action of the completed group ring $k[[U_\ell]]$. The $U_r$-action is equivariant for this $k[[U_\ell]]$-action. This gives rise to a ring homomorphism
\begin{equation*}
  k[[U_\ell]] \xrightarrow{\;\cong\;} \End_{\Mod(U_r)}(C^\infty(U,k))
\end{equation*}
which is easily seen to be an isomorphism. Using this identification the equality \eqref{f:twisted0} becomes the equality
\begin{multline}\label{f:twisted}
  \Hom_{\Mod(U_w \times U)}(C^\infty(U,k),w_* C^\infty(U,k)) =  \\
   \{\lambda \in k[[U]] : \lambda u = (w^{-1} u w)\lambda \; \text{for any $u \in U_w$}\} =: k[[U]]_w \ .
\end{multline}
We conclude that $\Omega$ restricts to an injective map
\begin{equation}\label{f:OmegaU}
  \Omega_U : \End_{\Mod(G \times U)}(\ind_{U_\ell}^{G_\ell}(C^\infty(U,k))) \longrightarrow \prod_{w \in \cW} k[[U]]_w
\end{equation}
with the additional property, by \eqref{f:imOmega}, that any element $(\lambda_w)_w$ in the image of $\Omega_U$ satisfies:
\begin{equation}\label{f:imOmegaU}
  \text{For any $f \in C^\infty(U,k)$ we have $\lambda_w(f) = 0$ for all but finitely many $w \in \cW$}.
\end{equation}
In section \ref{sec:twisted-action} we will study the vector spaces $k[[U]]_w$ based upon a formalism developed in the next section.
%\footnote{{\color{red} The full $G_r$-equivariance of an $\alpha$ leads to very complicated formulas for the $\Omega_U(\alpha)$. Surprisingly it seems to us that they are not needed %(we use of course $U_r$-equivariance).}}

\section{Fixed points in profinite permutation modules}
\label{sec:FixPtsPMM}
In this section, we extend \cite{Ard} Prop.\ 2.1 to a more general setting.

Let $\Gamma$ be a profinite group and let $Y$ be a profinite set equipped with a continuous action $\act : \Gamma \times Y \to Y$ of $\Gamma$. We begin with some topological preliminaries.

\begin{lemma}\label{OpenStab} Let $P$ be a closed and open subset of $Y$. Then the stabiliser $\Gamma_P$ is open in $\Gamma$.
\end{lemma}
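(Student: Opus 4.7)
The plan is to exploit continuity of the action together with compactness of $P$ and the profinite structure on $\Gamma$ to produce an open subgroup $V \leq \Gamma$ that lies inside $\Gamma_P$.

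First, I would use continuity of $\act$ pointwise on $P$: for each $y \in P$, since $1 \cdot y = y$ lies in the open set $P$, continuity yields open neighbourhoods $U_y \subseteq \Gamma$ of $1$ and $V_y \subseteq Y$ of $y$ with $U_y \cdot V_y \subseteq P$. Next, since $P$ is closed in the compact space $Y$, it is itself compact, so finitely many of the $V_y$'s cover $P$, say $V_{y_1}, \ldots, V_{y_n}$; then $U := U_{y_1} \cap \cdots \cap U_{y_n}$ is an open neighbourhood of $1$ in $\Gamma$ satisfying $U \cdot P \subseteq P$.

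The key refinement is now to pass from the open neighbourhood $U$ of $1$ to an open \emph{subgroup}. Because $\Gamma$ is profinite, the open subgroups of $\Gamma$ form a neighbourhood basis of $1$, so we may pick an open subgroup $V \leq \Gamma$ with $V \subseteq U$. Then $V \cdot P \subseteq P$, and for any $v \in V$ we also have $v^{-1} \in V$, so $v^{-1} \cdot P \subseteq P$, which upon multiplying by $v$ gives $P \subseteq v \cdot P$. Combining, $v \cdot P = P$ for all $v \in V$, i.e.\ $V \subseteq \Gamma_P$, and hence $\Gamma_P$ is open in $\Gamma$.

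The argument is essentially a routine continuity-plus-compactness sandwich; the only subtlety worth flagging is the last step, where one really does need that $\Gamma$ is profinite (and not merely locally compact) in order to upgrade the open neighbourhood of $1$ with $U \cdot P \subseteq P$ to an open subgroup, thereby converting the one-sided inclusion $v \cdot P \subseteq P$ into the equality $v \cdot P = P$ needed to land in the stabiliser.
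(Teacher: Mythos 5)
Your proof is correct, and it takes a genuinely more direct route than the paper's. The paper first proves the identity $\bigcap_{H \in \Op_c(\Gamma)} HP = P$ via a net/subnet compactness argument (citing Kelley and using that $\bigcap_H H = \{1\}$ and $Y$ is Hausdorff), then observes that each $HP$ is closed, so the complements $Y \setminus HP$ form an open cover of the compact set $Y \setminus P$, extracts a finite subcover to get $P = H_1 P \cap \cdots \cap H_n P$, and concludes that $H_1 \cap \cdots \cap H_n$ stabilises $P$. You instead run a tube-lemma argument directly: continuity of $\act$ at each $(1,y)$ with $y \in P$ produces product neighbourhoods inside $\act^{-1}(P)$, compactness of $P$ yields finitely many that cover it and hence a single open $U \ni 1$ with $U \cdot P \subseteq P$, and shrinking $U$ to an open subgroup $V$ (possible precisely because $\Gamma$ is profinite) converts the one-sided containment into $vP = P$ via inverses. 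Your argument dispenses with nets and with the auxiliary lemma that each $HP$ is closed, so it is shorter and more elementary; both proofs, however, converge on the same final mechanism --- open subgroups form a neighbourhood base of the identity, and $vP \subseteq P$ together with $v^{-1} \in V$ upgrades to equality --- so the core use of the profinite hypothesis is identical.
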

\begin{proof} Note that  $\Op_c(\Gamma)$ is a directed set under reverse inclusion. First, we show that
\[\bigcap\limits_{H \in \Op_c(\Gamma)}  HP = P.\]
Let $x$ lie in the intersection; then for each $H \in \Op_c(\Gamma)$ we can find $y_H \in P$ and $h_H \in H$ such that $x = h_H y_H$. Regarding $H \mapsto h_H$ as a net in $\Gamma$, we observe that $h_H \to 1$ because $\bigcap_{H \in \Op_c(\Gamma)} H = \{1\}$. Now $P$ is compact because $P$ is closed in $Y$ and $Y$ is compact. So, by \cite{Kel} Thm.\ 5.2, we can find a subnet $\varphi : B \to \Op_c(\Gamma)$ such that $y_{\varphi(b)} \to y$ for some $y \in P$. Note that $h_{\varphi(b)} \to 1$ still, so $(h_{\varphi(b)}, y_{\varphi(b)}) \to (1,y)$ in $\Gamma \times Y$. Since $\act : \Gamma \times Y \to Y$ is continuous, we conclude that $x = h_{\varphi(b)} \cdot y_{\varphi(b)} \to 1 \cdot y = y$. Therefore $x = y \in P$ because $Y$ is Hausdorff.

Now each $HP$ is the continuous image in $Y$ of the compact space $H \times P$ under $\act$, and is therefore compact. Since $Y$ is Hausdorff, each $HP$ is closed in $Y$. By the above,
\[ Y \backslash P = \bigcup\limits_{H \in \Op_c(\Gamma)} Y \backslash HP\]
is an open covering of $Y \backslash P$. Since $P$ is also open, its complement $Y \backslash P$ is closed and hence compact.  This implies that $P = H_1 P \cap \cdots \cap H_nP$ for some $H_1,\cdots, H_n \in \Op_c(\Gamma)$. But then the open subgroup $H_1 \cap \cdots \cap H_n$ of $\Gamma$ stabilises $P$.
\end{proof}

Now we can record a $\Gamma$-equivariant version of the well-known topological characterisation of profinite sets. Let $\mathfrak{P}$ be the set of open partitions $\cP$ of $Y$ so that every $\cP \in \mathfrak{P}$ is a finite discrete quotient space of $Y$. These form an inverse system if we order $\mathfrak{P}$ by refinement, so that $\cP_2 \geq \cP_1$ if and only if every $P_2 \in \cP_2$ is contained in some $P_1 \in \cP_1$. By \cite{Wil} Prop.\ 1.1.7, the natural map $Y \stackrel{\cong}{\longrightarrow} \varprojlim_{\cP \in \mathfrak{P}}\,\cP$ is an isomorphism of profinite sets.

We say that $\cQ \in \mathfrak{P}$ is \emph{$\Gamma$-stable} if $gQ \in \cQ$ for all $Q \in \cQ$ and all $g \in \Gamma$. Like $\mathfrak{P}$, the set $\mathfrak{Q} \subset \mathfrak{P}$ of $\Gamma$-stable open partitions of $Y$ forms  a directed set under refinement.

\begin{lemma}\label{lem:EqRefsExist}
Every $\cP \in \mathfrak{P}$ admits a $\Gamma$-stable refinement $\cQ \in \mathfrak{Q}$.
\end{lemma}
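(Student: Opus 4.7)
The plan is to build $\cQ$ as the atoms of the finite Boolean algebra of clopen subsets of $Y$ generated by the $\Gamma$-orbits of the blocks of $\cP$. Concretely, here is how I would proceed.

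First, I would use Lemma \ref{OpenStab} to observe that for every $P \in \cP$ the stabiliser $\Gamma_P$ is an open subgroup of the profinite group $\Gamma$, hence of finite index. Therefore the orbit $\Gamma P = \{gP : g \in \Gamma\}$ is a finite collection of clopen subsets of $Y$. Taking the union over the finitely many blocks $P \in \cP$, I obtain a finite family
\[
\mathcal{F} := \bigcup_{P \in \cP} \Gamma P
\]
of clopen subsets of $Y$ which contains $\cP$ and is stable under the action of $\Gamma$ (in the sense that $\Gamma$ permutes the elements of $\mathcal{F}$).

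Next, I would consider the finite Boolean subalgebra $\mathcal{B}$ of the clopen subsets of $Y$ generated by $\mathcal{F}$. Its atoms are the nonempty sets of the form $\bigcap_{S \in \mathcal{F}} S^{\varepsilon_S}$ with $\varepsilon_S \in \{+,-\}$, where $S^+ = S$ and $S^- = Y \setminus S$; these atoms form a finite partition $\cQ$ of $Y$ into clopen subsets, so $\cQ \in \mathfrak{P}$. Since every $P \in \cP \subseteq \mathcal{F}$ is a union of atoms of $\mathcal{B}$, the partition $\cQ$ refines $\cP$.

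Finally, I would check $\Gamma$-stability: for any $g \in \Gamma$ and any atom $Q = \bigcap_{S \in \mathcal{F}} S^{\varepsilon_S}$ of $\mathcal{B}$ we have
\[
gQ \;=\; \bigcap_{S \in \mathcal{F}} (gS)^{\varepsilon_S},
\]
and since $S \mapsto gS$ permutes $\mathcal{F}$, the set $gQ$ is again an atom of $\mathcal{B}$. Thus $\Gamma$ permutes $\cQ$, so $\cQ \in \mathfrak{Q}$, as required. The only real ingredient beyond bookkeeping is the openness of the stabilisers $\Gamma_P$, which has already been established in Lemma \ref{OpenStab}; once that is available the argument is essentially formal, so I do not expect any serious obstacle.
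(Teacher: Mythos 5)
Your argument is correct and is essentially the paper's proof in different clothing: the paper takes coset representatives $g_1,\dots,g_k$ of an open subgroup $H$ stabilising every block of $\cP$ and forms $\cQ := \{g_1P_1 \cap \cdots \cap g_kP_k\}\setminus\{\emptyset\}$, which is exactly the set of atoms of the finite Boolean algebra generated by your family $\mathcal{F}=\bigcup_P \Gamma P = g_1\cP \cup \cdots \cup g_k\cP$. Both rely on Lemma \ref{OpenStab} to make the orbits finite and then take the common refinement of the translated partitions; the only difference is whether one packages this as intersections over coset representatives or as Boolean-algebra atoms.
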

\begin{proof}
Because $Y$ is compact, $\cP$ is a finite set, so using Lemma \ref{OpenStab} we can find an open subgroup $H$ of $\Gamma$ which stabilises each member of $\cP$. Fix a complete set $\{g_1,\cdots,g_k\}$ of left coset representatives of $H$ in $\Gamma$, and define
\[\cQ := \{ g_1P_1 \cap \cdots \cap g_k P_k : P_1,\cdots, P_k \in \cP \} \backslash \{\emptyset\}.\]
Then every member of $\cQ$ is a non-empty closed and open subset of $\Gamma$, and since $Y = \bigcup_{P \in \cP} g_i P$ for each $i = 1, \cdots, k$, we see that
\[Y = \bigcap_{i=1}^k \bigcup\limits_{P \in \cP} g_i P = \bigcup\limits_{P_1,\cdots,P_k \in \cP}g_1 P_1 \cap \cdots \cap g_k P_k = \bigcup\limits_{Q \in \cQ} Q.\]
So, $\cQ$ forms a covering of $Y$ by non-empty closed and open subsets. Let $Q = g_1 P_1 \cap \cdots \cap g_k P_k$ and $Q' = g_1 P_1' \cap \cdots \cap g_k P_k'$ be two members of $\cQ$ such that $Q \cap Q' \neq \emptyset$. Then
\[ \emptyset \neq Q \cap Q' = \left( g_1 P_1 \cap \cdots \cap g_k P_k \right) \cap \left( g_1 P'_1 \cap \cdots \cap g_k P'_k \right) = g_1 (P_1 \cap P_1') \cap \cdots \cap g_k (P_k \cap P_k')\]
shows that $P_i \cap P_i' \neq \emptyset$ for each $i=1,\cdots, k$. Since $\cP$ is a partition, $P_i = P_i'$ for each $i=1,\cdots, k$ and hence $Q = Q'$. So, $\cQ$ is a partition of $Y$.

Finally, to see that $\cQ$ is $\Gamma$-stable, consider the permutation action of $\Gamma$ on $\Gamma / H$. Fix $x \in \Gamma$ and write $x g_iH = g_{x \cdot i} H$ for some permutation $i \mapsto x \cdot i$ of $\{1,\cdots, k\}$. Since $H$ stabilises each $P \in \cP$, we have $x g_i P = g_{x \cdot i} P$ for all $i = 1,\cdots,k$ and all $P \in \cP$. Then for any $Q = gP_1 \cap \cdots \cap g_k P_k \in \cQ$, we see that
\[ xQ = x(g_1P_1\cap \cdots \cap g_k P_k) = g_{x\cdot 1} P_1 \cap \cdots \cap g_{x \cdot k} P_k = g_1 P_{x^{-1} \cdot 1} \cap \cdots \cap g_k P_{x^{-1} \cdot k} \in \cQ.\]
Thus $\cQ$ is the required $\Gamma$-stable refinement of $\cP$.
\end{proof}

We regard each $\cQ \in \mathfrak{Q}$ as a finite discrete space; it admits a natural $\Gamma$-action. Lemma \ref{OpenStab} implies that this action is continuous. If $\cQ \in \mathfrak{Q}$ is a refinement of $\cR \in \mathfrak{Q}$, then the natural map $\pi_{\cQ\cR} : \cQ \to \cR$ which sends $Q \in \cQ$ to the unique element $R \in \cR$ containing $Q$ is $\Gamma$-equivariant, so the profinite set $\varprojlim_{\cQ \in \mathfrak{Q}} \cQ$ carries a natural continuous $\Gamma$-action. From Lemma \ref{lem:EqRefsExist} we easily deduce the following

\begin{corollary}\label{lem:EqPtsCofinal}
The natural $\Gamma$-equivariant map $Y \xrightarrow{\;\cong\;} \varprojlim_{\cQ \in \mathfrak{Q}}\cQ$ is an isomorphism.
\end{corollary}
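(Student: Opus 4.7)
The plan is to combine the known ungraded statement $Y \xrightarrow{\cong} \varprojlim_{\cP \in \mathfrak{P}}\cP$ (from \cite{Wil} Prop.\ 1.1.7, which is cited just before the statement of Lemma \ref{lem:EqRefsExist}) with the cofinality statement that Lemma \ref{lem:EqRefsExist} supplies. The only real content of the corollary is therefore that passing from the directed set $\mathfrak{P}$ of all open partitions of $Y$ to the sub-directed set $\mathfrak{Q}$ of $\Gamma$-stable open partitions does not change the limit.

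First I would observe that Lemma \ref{lem:EqRefsExist} is precisely the statement that $\mathfrak{Q}$ is cofinal in $\mathfrak{P}$: given any $\cP \in \mathfrak{P}$ there exists $\cQ \in \mathfrak{Q}$ with $\cQ \geq \cP$ in the refinement order. Since the transition maps $\pi_{\cQ\cR} : \cQ \to \cR$ for $\cQ \geq \cR$ in $\mathfrak{Q}$ are the restrictions of the corresponding transition maps $\cP \to \cP'$ in the larger system $\mathfrak{P}$, the standard cofinality principle for inverse limits of sets yields a canonical bijection
\[ \varprojlim_{\cP \in \mathfrak{P}} \cP \xrightarrow{\;\cong\;} \varprojlim_{\cQ \in \mathfrak{Q}} \cQ \]
which identifies a compatible family $(y_\cP)_\cP$ with its restriction $(y_\cQ)_\cQ$ to the cofinal subsystem. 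Composing with the isomorphism $Y \xrightarrow{\cong} \varprojlim_{\cP \in \mathfrak{P}}\cP$ from \cite{Wil} Prop.\ 1.1.7 then yields the desired bijection $Y \xrightarrow{\cong} \varprojlim_{\cQ \in \mathfrak{Q}}\cQ$, and unwinding the definitions shows that this bijection is indeed the natural map $y \mapsto ([y]_\cQ)_{\cQ \in \mathfrak{Q}}$, where $[y]_\cQ$ denotes the unique member of $\cQ$ containing $y$.

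Finally I would check $\Gamma$-equivariance: by hypothesis $g \cdot [y]_\cQ = [gy]_\cQ$ for every $\cQ \in \mathfrak{Q}$ and every $g \in \Gamma$, which is exactly what is needed. Both source and target carry continuous $\Gamma$-actions (the continuity on the target follows from the fact, noted in the paragraph preceding the corollary, that each $\cQ \in \mathfrak{Q}$ is a finite discrete $\Gamma$-set with continuous action by Lemma \ref{OpenStab}), and the map is continuous because it is the composition of two continuous isomorphisms of profinite sets. There is essentially no obstacle here: once Lemma \ref{lem:EqRefsExist} is in hand, the corollary is a purely formal consequence of cofinality of inverse limits, and the main work of the section has already been done in the preceding two lemmas.
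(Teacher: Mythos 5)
Your proof is correct and matches the paper's approach exactly: the paper gives no separate proof of this corollary, merely stating that it is "easily deduced" from Lemma~\ref{lem:EqRefsExist}, and the intended argument is precisely the cofinality of $\mathfrak{Q}$ in $\mathfrak{P}$ combined with the ungraded case from \cite{Wil}. Your unwinding of the natural map and the observation about continuity of the $\Gamma$-action are accurate elaborations of what the authors leave implicit.
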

%\begin{proof} The inclusion $\mathfrak{Q} \subset \mathfrak{P}$ induces a natural continuous map
%\[ \varprojlim_{\cP \in \mathfrak{P}} \cP  \to \varprojlim_{\cQ \in \mathfrak{Q}} \cQ\]
%and the map in question is the composition of this map with the isomorphism of profinite sets $Y \stackrel{\cong}{\longrightarrow} \varprojlim_{\cP \in \mathfrak{P}}\,\cP$. But the first map is also an isomorphism because of Lemma \re{lem:EqRefsExist}. \end{proof}
Let $R$ be any possibly noncommutative ring.

\begin{definition}
The \emph{permutation module} $R[[Y]]$ of $Y$ is the projective limit
\[ R[[Y]] := \varprojlim_{\cQ \in \mathfrak{Q}} \, R[ \cQ ]\]
of the usual permutation $R[\Gamma]$-modules $R[ \cQ ]$ as $\cQ$ ranges over the set $\mathfrak{Q}$ of $\Gamma$-stable open partitions of $Y$. It is equipped with the projective limit topology of the discrete topologies.
\end{definition}

By functoriality, there is a natural $\Gamma$-action on $R[[Y]]$ and we would like to describe the $R$-submodule of $\Gamma$-invariants $R[[Y]]^\Gamma$ in $R[[Y]]$.

\begin{definition} \,
\begin{enumerate}
\item Let $\Gamma \backslash Y$ denote the set of $\Gamma$-orbits in $Y$.
\item Let $\Delta_\Gamma(Y) := \{ y \in Y : |\Gamma \cdot y| < \infty\}$ be the set of points whose $\Gamma$-orbit is finite.
\item For each $\cO \in \Delta_\Gamma(Y)$, let $\upsilon_Y(\cO) := \sum\limits_{y \in \cO} y \in R[Y] \subset R[[Y]]$ denote the \emph{orbit sum} of $\cO$.
\end{enumerate}
\end{definition}

Any such finite orbit sum is $\Gamma$-invariant, so we obtain a natural $R$-linear map
\[ \upsilon_Y : R[\Gamma \backslash \Delta_\Gamma(Y)] \to R[[Y]]^\Gamma\]
which sends the basis vector $\cO \in R[\Gamma \backslash \Delta_\Gamma(Y)]$ to its orbit sum $\upsilon_Y(\cO)$. Evidently this map is injective.

\begin{remark}
Since the action of $\Gamma$ on $Y$ is continuous, the stabiliser $\Gamma_y$ of any point $y \in Y$ is closed. Since a closed subgroup of finite index in $\Gamma$ is open, we see that $y \in Y$ lies in $\Delta_\Gamma(Y)$ if and only if $y$ is fixed by some open subgroup $H$ of $\Gamma$, which we may as well take to be normal:
\[ \Delta_\Gamma(Y) = \bigcup\limits_{H \in \Op_c(\Gamma)} Y^H = \bigcup\limits_{N \in \Op_c(\Gamma)\,\text{normal}} Y^N.  \]
\end{remark}

We begin with the well-known, but instructive case where $Y$ is finite.

\begin{lemma}\label{FiniteInvariants}
Suppose that $Y$ is finite. Then $\upsilon_Y$ is a bijection.
\end{lemma}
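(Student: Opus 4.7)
The plan is to reduce the statement to the classical (and essentially trivial) fact that invariants in a finite permutation $R$-module are spanned by orbit sums, by verifying two simplifications afforded by the finiteness of $Y$.

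First I would note that when $Y$ is finite, every $\Gamma$-orbit in $Y$ is automatically finite, so $\Delta_\Gamma(Y) = Y$ and the source of $\upsilon_Y$ is just the free $R$-module $R[\Gamma \backslash Y]$ on the set of orbits. Next I would check that $R[[Y]]$ coincides with the ordinary permutation module $R[Y]$. Since $Y$ is a finite Hausdorff space, the partition $\cP_0$ into singletons is an element of $\mathfrak{P}$; by Lemma \ref{OpenStab} each singleton $\{y\}$ has open stabiliser $\Gamma_y$, and because $Y$ is finite the intersection $N := \bigcap_{y \in Y} \Gamma_y$ is still open in $\Gamma$. Thus $\Gamma$ acts on $Y$ through the finite quotient $\Gamma/N$, which implies that $\cP_0$ itself lies in $\mathfrak{Q}$. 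Hence $\cP_0$ is the minimal element of the directed poset $\mathfrak{Q}$, so the projective limit defining $R[[Y]]$ collapses to $R[Y] = R[\cP_0]$.

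It then suffices to show that the map $\upsilon_Y \colon R[\Gamma \backslash Y] \to R[Y]^\Gamma$ is bijective. Injectivity was already observed in the text. For surjectivity, an element $\xi = \sum_{y \in Y} r_y y \in R[Y]$ is $\Gamma$-invariant if and only if $r_{gy} = r_y$ for all $g \in \Gamma$ and $y \in Y$, i.e.\ the coefficient function $y \mapsto r_y$ is constant on each orbit $\cO \in \Gamma \backslash Y$. Writing $r_\cO$ for this common value, we obtain $\xi = \sum_{\cO \in \Gamma \backslash Y} r_\cO \, \upsilon_Y(\cO)$, which exhibits $\xi$ as lying in the image of $\upsilon_Y$.

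There is no genuine obstacle here: the only mildly non-obvious point is the identification $R[[Y]] = R[Y]$ for finite $Y$, which uses Lemma \ref{OpenStab} to show that $\Gamma$ acts through a finite quotient so that the singleton partition is $\Gamma$-stable. Once this is in place, the result is immediate from the standard description of $\Gamma$-invariants in a permutation module. This finite case will serve as the base case for the profinite analogue treated in the remainder of the section.
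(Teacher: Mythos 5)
Your proof is correct. The paper in fact gives no proof of this lemma at all, flagging it as \textquotedblleft{}well-known, but instructive\textquotedblright{} and moving straight on to the general case; your argument supplies the standard one. Two small remarks. First, the appeal to Lemma \ref{OpenStab} and the subgroup $N$ is unnecessary: the singleton partition $\cP_{0}$ is $\Gamma$-stable for purely formal reasons, since $g\{y\}=\{gy\}$ is again a singleton, and it is an open partition because a finite profinite set is discrete. Hence $\cP_{0}\in\mathfrak{Q}$ immediately, and as the maximal element of $\mathfrak{Q}$ under refinement it collapses the projective limit to $R[Y]$, exactly as you say. Second, the reduction $\Delta_{\Gamma}(Y)=Y$ and the orbit-sum description of $R[Y]^{\Gamma}$ are both fine as written. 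No gap.
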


For any two $\cQ \geq \cR$ in $\mathfrak{Q}$ we introduce the $R$-linear map
\begin{align*}
  \sigma_{\cQ\cR} : R[\Gamma \backslash \cQ] & \longrightarrow R[\Gamma \backslash \cR] \\
              \cC & \longmapsto \frac{|\cC|}{|\pi_{\cQ\cR}(\cC)|} \pi_{\cQ\cR}(\cC) \ .
\end{align*}
Note that the fraction actually is an integer. It is easy the check that the diagram
\begin{equation}\label{f:sigmaQR}
  \xymatrix{ R[ \Gamma \backslash \cQ] \ar[rr]^{\sigma_{\cQ\cR}}\ar[d]^{\cong}_{\upsilon_{\cQ}}    &&  R[ \Gamma \backslash \cR] \ar[d]_{\cong}^{\upsilon_{\cR}}\\
R[\cQ]^\Gamma \ar[rr]_{R[\pi_{\cQ\cR}]} && R[\cR]^\Gamma }
\end{equation}
is commutative. By passing to the projective limit we obtain an $R$-linear topological isomorphism
\begin{equation}\label{f:GenInv}
  \varprojlim_{\cQ \in \mathfrak{Q}} R[ \Gamma \backslash \cQ] \xrightarrow{\;\cong\;} R[[Y]]^\Gamma
\end{equation}
w.r.t.\ the projective limit topologies. Note that on the target this topology coincides with the subspace topology from $R[[Y]]$.

We now specialise to the case where $R$ is a ring of characteristic $p$ and $\Gamma$ is a pro-$p$ group. In this case, we have the following description.

\begin{lemma}\label{lem:currents}
Let $x = (x_{\cQ})_{\cQ \in \mathfrak{Q}} \in \prod\limits_{\cQ \in \mathfrak{Q}} R[\Gamma \backslash \cQ]$ and write $x_{\cQ} = \sum\limits_{\cC \in \Gamma \backslash \cQ} x(\cC) \, \cC$ for all $\cQ \in \mathfrak{Q}$. Then $x \in  \varprojlim_{\cQ \in \mathfrak{Q}}  R[\Gamma \backslash \cQ]$ if and only if for all $\cP \geq \cQ$ in $\mathfrak{Q}$ and all $\cC \in \Gamma \backslash \cQ$ we have
\[ x(\cC) = \sum_{\cB} x(\cB)\]
where the sum runs over the finite set $\{\cB \in \Gamma \backslash \cP : \pi_{\cP\cQ}(\cB) = \cC$ and $|\cB| = |\cC|\}$.
\end{lemma}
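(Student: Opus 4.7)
The plan is to unpack the compatibility condition defining the projective limit $\varprojlim_{\cQ} R[\Gamma \backslash \cQ]$ using the explicit description of the transition maps $\sigma_{\cP\cQ}$, and then use the pro-$p$ / characteristic $p$ hypotheses to discard all but the orbits of maximal size in each fibre.

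First I would observe that by definition, a tuple $x = (x_\cQ)_{\cQ \in \mathfrak{Q}}$ lies in the projective limit if and only if $\sigma_{\cP\cQ}(x_\cP) = x_\cQ$ for every pair $\cP \geq \cQ$ in $\mathfrak{Q}$. Expanding $x_\cP = \sum_{\cB \in \Gamma \backslash \cP} x(\cB) \, \cB$ and applying the formula
\[
\sigma_{\cP\cQ}(\cB) = \frac{|\cB|}{|\pi_{\cP\cQ}(\cB)|} \, \pi_{\cP\cQ}(\cB),
\]
and then reading off the coefficient of a fixed $\cC \in \Gamma \backslash \cQ$ on both sides, the compatibility condition becomes
\[
x(\cC) = \sum_{\cB \in \Gamma \backslash \cP,\ \pi_{\cP\cQ}(\cB) = \cC} \frac{|\cB|}{|\cC|}\, x(\cB).
\]
The fibre is finite because $\cP$ is a finite set, and each ratio $|\cB|/|\cC|$ is a positive integer because the $\Gamma$-equivariant surjection $\cB \twoheadrightarrow \pi_{\cP\cQ}(\cB) = \cC$ of transitive $\Gamma$-sets has constant fibre size $|\cB|/|\cC|$.

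Next, I would exploit the hypotheses. Since $\Gamma$ is pro-$p$ and $\cP, \cQ$ are finite discrete $\Gamma$-sets (so each orbit is acted on through a finite pro-$p$ quotient), both $|\cB|$ and $|\cC|$ are powers of $p$; hence so is the integer $|\cB|/|\cC|$. Because $R$ has characteristic $p$, the image of $|\cB|/|\cC|$ in $R$ equals $1$ when $|\cB| = |\cC|$ and vanishes whenever $|\cB|/|\cC| \geq p$, i.e.\ whenever $|\cB| > |\cC|$. Therefore the displayed compatibility condition collapses to
\[
x(\cC) = \sum_{\cB \in \Gamma \backslash \cP,\ \pi_{\cP\cQ}(\cB) = \cC,\ |\cB| = |\cC|} x(\cB),
\]
which is precisely the asserted characterisation. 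The equivalence is then immediate in both directions, so the lemma follows.

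There is essentially no analytic obstacle here — the entire content is the observation that in characteristic $p$, a $p$-power integer $> 1$ is zero, which kills all the non-bijective contributions to the fibre. The only care needed is the bookkeeping to verify that the ratios $|\cB|/|\cC|$ really are powers of $p$, which uses in an essential way both the pro-$p$ hypothesis on $\Gamma$ and the finiteness of the orbits appearing in the finite partitions $\cP, \cQ$.
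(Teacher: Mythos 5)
Your proposal is correct and follows essentially the same path as the paper's proof: expand the compatibility $\sigma_{\cP\cQ}(x_\cP) = x_\cQ$ coefficient by coefficient using the explicit transition map, observe that the integer $|\cB|/|\cC|$ is a power of $p$ (you derive this by noting both orbit sizes individually are $p$-powers, while the paper identifies the ratio directly with a subgroup index $[\Gamma_{\pi_{\cP\cQ}(P)}:\Gamma_P]$ — a cosmetic difference), and then use characteristic $p$ to kill the terms with $|\cB| > |\cC|$.
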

\begin{proof} Let $\cP \geq \cQ$ and write $x_{\cP} = \sum\limits_{\cB \in \Gamma \backslash \cP} x(\cB) \cB$. Then by \eqref{f:sigmaQR}, we have
\[\sigma_{\cP\cQ}(x_{\cP}) = \sum\limits_{\cC \in \Gamma \backslash \cQ} \left(\sum\limits_{\pi_{\cP\cQ}(\cB)=\cC}  x(\cB) \frac{ |\cB| }{| \cC| }\right) \cC. \]
Let $\cB \in \Gamma \backslash \cP$ and let $\cC = \pi_{\cP\cQ}(\cB)$. Then $|\cB| / |\cC| = [\Gamma_{\pi_{\cP\cQ}(P)} : \Gamma_P]$ for any $P \in \cB$. Since $\Gamma$ is a pro-$p$ group, the index $[\Gamma_{\pi_{\cP\cQ}(P)}:\Gamma_P]$ is a power of $p$. Since $R$ is a ring of characteristic $p$, $|\cB|/|\cC|$ is zero in $R$ if $|\cB| > |\cC|$. Hence
\[\sigma_{\cP\cQ}(x_{\cP}) = \sum\limits_{\cC \in \Gamma \backslash \cQ}\left(\sum_{\substack{\pi_{\cP\cQ}(\cB)=\cC  \\  |\cB| = |\cC|}}  x(\cB) \right) \cC.\]
The result follows, because $x \in \varprojlim_{\cQ \in \mathfrak{Q}}  R[\Gamma \backslash \cQ]$ if and only if $\sigma_{\cP\cQ}(x_{\cP}) = x_{\cQ}$ for all $\cP \geq \cQ$.
\end{proof}

Here is our generalisation of \cite{Ard} Prop.\ 2.1.

\begin{proposition}
\label{FixedPoint} Suppose that $R$ is a ring of characteristic $p$ and that $\Gamma$ is a pro-$p$ group. Then the map $\upsilon_Y : R[\Gamma \backslash \Delta_\Gamma(Y)] \to R[[Y]]^\Gamma$ has dense image.
\end{proposition}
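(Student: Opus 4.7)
Via the topological isomorphism \eqref{f:GenInv}, I identify $R[[Y]]^\Gamma$ with $\varprojlim_{\cQ \in \mathfrak{Q}} R[\Gamma \backslash \cQ]$, in which a base of open neighbourhoods of $0$ is given by the kernels of the projections $\pi_{\cQ_0}$ to the individual $R[\Gamma \backslash \cQ_0]$. Density of $\upsilon_Y$ is therefore equivalent to the statement that, for every $\cQ_0 \in \mathfrak{Q}$ and every $x \in R[[Y]]^\Gamma$, the projection $x_{\cQ_0} \in R[\Gamma \backslash \cQ_0]$ lies in the image of $\pi_{\cQ_0} \circ \upsilon_Y$. As a first step I would compute this composite on a single basis vector $\cO \in \Gamma \backslash \Delta_\Gamma(Y)$: the elements of $\cQ_0$ meeting $\cO$ form a single $\Gamma$-orbit $\cC \in \Gamma \backslash \cQ_0$, and by $\Gamma$-equivariance the intersection number $n := |\cO \cap Q|$ is independent of $Q \in \cC$ and equals $[\Gamma_Q : \Gamma_y]$ for any $y \in \cO \cap Q$. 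Hence $\pi_{\cQ_0}(\upsilon_Y(\cO)) = n \cdot \cC$; the pro-$p$ hypothesis on $\Gamma$ forces $n$ to be a power of $p$, so in $R$ this image equals $\cC$ if $\Gamma_y = \Gamma_Q$ and vanishes otherwise.

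Writing $x_{\cQ_0} = \sum_{\cC \in \Gamma \backslash \cQ_0} x(\cC) \cdot \cC$, density will therefore follow once I produce, for each $\cC$ with $x(\cC) \neq 0$, a finite $\Gamma$-orbit $\cO_\cC \subseteq Y$ meeting a fixed $Q \in \cC$ in a single $\Gamma_Q$-fixed point. Indeed the finite sum $\sum_\cC x(\cC) \cdot \cO_\cC \in R[\Gamma \backslash \Delta_\Gamma(Y)]$ will then project to $x_{\cQ_0}$ as required.

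To produce $\cO_\cC$, I fix $Q \in \cC$ and, for each $\cP \in \mathfrak{Q}$ with $\cP \geq \cQ_0$, set $T_\cP := \{ P \in \cP : P \subseteq Q,\ \Gamma_P = \Gamma_Q \}$; the restriction maps $\pi_{\cP \cP'}$ turn $(T_\cP)$ into an inverse system of finite sets. Lemma \ref{lem:currents} combined with the assumption $x(\cC) \neq 0$ forces each $T_\cP$ to be non-empty: the lemma supplies some $\cB \in \Gamma \backslash \cP$ with $\pi_{\cP \cQ_0}(\cB) = \cC$ and $|\cB| = |\cC|$, whereupon the $\Gamma$-equivariant surjection $\cB \to \cC$ between transitive $\Gamma$-sets is bijective, and the unique preimage of $Q$ lies in $T_\cP$. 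By Tychonoff, $\varprojlim_\cP T_\cP$ is non-empty; any element $(P_\cP)$ in it is a compatible system in $\varprojlim_{\cP \geq \cQ_0} \cP$, which by cofinality of $\{\cP \geq \cQ_0\}$ in $\mathfrak{Q}$ and Corollary \ref{lem:EqPtsCofinal} picks out a unique point $y \in Y$ with $y \in P_\cP$ for every $\cP \geq \cQ_0$; in particular $y \in Q$. Since each $P_\cP$ is $\Gamma_Q$-stable, $\Gamma_Q \subseteq \Gamma_y$, and the reverse inclusion holds because $y \in Q$ and $\cQ_0$ is a partition. Thus $\cO_\cC := \Gamma y$ has size $[\Gamma : \Gamma_Q] = |\cC|$ and meets each element of $\cC$ in exactly one point, as required. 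The main obstacle is exactly this inverse-limit step—extracting a $\Gamma_Q$-fixed point of $Q$ from the purely algebraic data of $x$—which relies on the pro-$p$/characteristic-$p$ hypotheses via Lemma \ref{lem:currents} to ensure non-emptiness of each $T_\cP$.
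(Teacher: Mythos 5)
Your proof is correct and follows essentially the same strategy as the paper: reduce via \eqref{f:GenInv} and the projective-limit topology to hitting each coordinate $R[\Gamma\backslash\cQ_0]$, use Lemma~\ref{lem:currents} to show certain fibres are non-empty, and extract a point $y$ of finite $\Gamma$-orbit by taking a projective limit of non-empty finite sets. The only cosmetic difference is that you fix a representative $Q\in\cC$ and form a single inverse system of \emph{elements} $T_\cP = \{P\in\cP : P\subseteq Q,\ \Gamma_P=\Gamma_Q\}$, whereas the paper first forms the inverse system of \emph{orbits} $\cS(\cQ)$ and then extracts a compatible family of elements as a second step; your packaging is a touch more streamlined but records the same information.
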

\begin{proof}
In view of \eqref{f:GenInv}, it is enough to show that the natural $R$-linear map
\[\eta : R[\Gamma \backslash \Delta_\Gamma(Y)] \to \varprojlim_{\cQ \in \mathfrak{Q}} R[ \Gamma \backslash \cQ]\]
has dense image. Let $\sigma_{\cR} : \varprojlim_{\cQ \in \mathfrak{Q}} R[ \Gamma \backslash \cQ]  \to R[\Gamma \backslash \cR]$ be the canonical projection maps; by definition of the projective limit topology, it will be enough to show that
\begin{equation}\label{f:density}
\sigma_{\cR}(\im \eta) = \im \sigma_{\cR} \quad\text{for all $\cR \in \mathfrak{Q}$}.
\end{equation}
Fix $\cR \in \mathfrak{Q}$ and $\cD \in \Gamma \backslash \cR$. For each $\cQ \geq \cR$ in $\mathfrak{Q}$, define
\[ \cS(\cQ) := \{ \cC \in \Gamma \backslash \cQ : \pi_{\cQ\cR}(\cC) = \cD \ \mbox{and}\ |\cC| = |\cD| \}.\]
Suppose $\cP \geq \cQ \geq \cR$ and pick $\cB \in \cS(\cP)$. Let $\cC := \pi_{\cP\cQ}(\cB) \in \Gamma\backslash \cQ$; then $\pi_{\cQ\cR}(\cC) = \pi_{\cQ\cR}\pi_{\cP\cQ}(\cB) = \pi_{\cP\cR}(\cB) = \cD$ and hence $|\cB| \geq |\pi_{\cP\cQ}(\cB)| = |\cC| \geq |\pi_{\cQ\cR}(\cC)| = |\cD| = |\cB|$ which implies that $|\cC| = |\cD|$. Hence $\cC \in \cS(\cQ)$. In view of \eqref{f:sigmaQR}, we see that the transition map $\sigma_{\cP\cQ} : R[\Gamma\backslash \cP] \to R[\Gamma\backslash \cQ]$ restricts to a well-defined map $\sigma_{\cP\cQ} : \cS(\cP) \to \cS(\cQ)$, so the finite sets $(\cS(\cQ))_{\cQ \geq \cR}$ form a projective system.

Now, suppose that there exists $x \in \varprojlim_{\cQ \in \mathfrak{Q}} R[ \Gamma \backslash \cQ]$ such that $x(\cD) \neq 0$ (notation as in Lemma \ref{lem:currents}). For every $\cQ \geq \cR$, we deduce from Lemma \ref{lem:currents} that $x(\cC) \neq 0$ for at least one $\cC \in \Gamma \backslash \cQ$ with $\pi_{\cQ\cR}(\cC) = \cD$ and $|\cC| = |\cD|$. Hence $\cS(\cQ)$ is non-empty for every $\cQ \geq \cR$. Since the indexing set $\{\cQ \in \mathfrak{Q} : \cQ \geq \cR\}$ of this projective system is directed, we deduce from \cite{Bou} Chap. I, $\S 9.6$, Prop. 8(b) that the projective limit $\varprojlim_{\cQ \geq \cR} \cS(\cQ)$ is non-empty. Choose a family $(\cC(\cQ))_{\cQ \geq \cR}$ in this limit. Then since each member of this family is a finite set of size $|\cD|$, it follows from the definition of $\cS(\cQ)$ that the transition maps $\pi_{\cP\cQ} : \cP \to \cQ$ restrict to bijections $\pi_{\cP\cQ} : \cC(\cP) \to \cC(\cQ)$ for every $\cP\geq \cQ \geq \cR$. Therefore the projective limit $\varprojlim_{\cQ \geq \cR} \cC(\cQ)$ is non-empty. So we may choose an element $(P(\cQ))_{\cQ \geq \cR}$ in this projective limit. Although we think of $P(\cQ)$ as a point in the finite set $\cQ$ we may view $P(\cQ)$ correspondingly as a non-empty open and closed subset of $Y$. By definition of $\pi_{\cQ\cR}$, we see that for each $\cP \geq \cQ \geq \cR$, $P(\cP)$ is contained in $P(\cQ)$. Since $\mathfrak{Q}$ is directed and forms a cofinal family of finite open partitions of $Y$ by Lemma \ref{lem:EqRefsExist} and since $Y$ is a profinite set, we see that $\bigcap_{\cQ \geq \cR} P(\cQ)$ is a singleton point, $\{y_{\cD}\}$ say. Now, $\Gamma_{P(\cP)} \subseteq \Gamma_{P(\cQ)}$ for all $\cP \geq \cQ \geq \cR$ since $\pi_{\cP\cQ} : \cP \to \cQ$ is $\Gamma$-equivariant and since its restriction $\pi_{\cP\cQ} : \cC(\cP) \to \cC(\cQ)$ is a bijection mapping $P(\cQ)$ to $P(\cP)$. On the other hand, $[\Gamma : \Gamma_{P(\cQ)}] = |\Gamma \cdot P(\cQ)| = |\cC(\cQ)| = |\cD|$ shows that in fact $\Gamma_{P(\cQ)} = \Gamma_{P(\cR)}$ for all $\cQ \geq \cR$. Since $\bigcap_{\cQ \geq \cR} \Gamma_{P(\cQ)}$ evidently stabilises $\bigcap_{\cQ \geq \cR} P(\cQ)$, we conclude that the finite index subgroup $\Gamma_{P(\cR)}$  of $\Gamma$ fixes $y_{\cD}$ and hence $y_{\cD} \in \Delta_\Gamma(Y)$. Hence $\sigma_{\cR}(\eta(\Gamma \cdot y_{\cD})) = \Gamma \cdot P(\cR) = \cD$.

Finally, since the inclusion $\sigma_{\cR}(\im \eta) \subseteq \im \sigma_{\cR}$ in \eqref{f:density} is clear, take $x = (x_{\cQ})_{\cQ \in \mathfrak{Q}} \in \varprojlim_{\cQ \in \mathfrak{Q}} R[ \Gamma \backslash \cQ]$ and consider its image $\sigma_{\cR}(x) = x_{\cR} = \sum\limits_{x(\cD) \neq 0} x(\cD) \cD \in R[\Gamma \backslash \cR]$. Then
\[x_\cR = \sigma_{\cR} \eta \left( \sum\limits_{x(\cD) \neq 0} x(\cD) \, \Gamma \cdot y_{\cD} \right) \in \sigma_{\cR}(\im \eta).\]
This shows that $\im \sigma_{\cR} \subseteq \sigma_{\cR}(\im \eta)$ and completes the proof.
\end{proof}

\begin{corollary}\label{cor:OrbitSumsGen}
Let $\Gamma$ be a pro-$p$ group which acts continuously on the profinite set $Y$ and let $R$ be a ring of characteristic $p$. Suppose that $\Delta_\Gamma(Y)$ is closed in $Y$. Then $\Delta_\Gamma(Y)$ is also a profinite set, and the natural maps
\[j : R[[\Gamma \backslash \Delta_\Gamma(Y)]] \to R[[\Delta_\Gamma(Y)]]^\Gamma \quad \mbox{and} \quad i : R[[\Delta_\Gamma(Y)]]^\Gamma \to R[[Y]]^\Gamma\]
are both isomorphisms.
\end{corollary}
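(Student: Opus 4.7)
The first claim is immediate: a closed subspace of the profinite set $Y$ is compact, Hausdorff and totally disconnected, hence profinite. I set $Y' := \Delta_\Gamma(Y)$, and note that $\Delta_\Gamma(Y') = Y'$ since every $\Gamma$-orbit in $Y'$ is finite by construction. Hence Proposition \ref{FixedPoint} applies to both $(Y,\Gamma,R)$ and $(Y',\Gamma,R)$, giving that $\upsilon_Y : R[\Gamma\backslash Y'] \to R[[Y]]^\Gamma$ and $\upsilon_{Y'} : R[\Gamma\backslash Y'] \to R[[Y']]^\Gamma$ each have dense image; both are injective because the orbit sums $\upsilon_Y(\cO) = \sum_{y \in \cO} y$ for distinct orbits $\cO$ are supported on disjoint sets of basis elements.

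For $i$, the plan is to exploit closedness of $Y'$ in $Y$. Every $\Gamma$-stable open partition $\cQ$ of $Y$ restricts to a $\Gamma$-stable open partition $\cQ|_{Y'}$ of $Y'$, and combining Lemma \ref{lem:EqRefsExist} with the fact that clopens of $Y'$ extend to clopens of $Y$, these restrictions are cofinal among all $\Gamma$-stable open partitions of $Y'$. This yields a continuous restriction $\rho: R[[Y]] \to R[[Y']]$ with extension-by-zero right inverse $\epsilon: R[[Y']] \to R[[Y]]$ satisfying $\rho\circ\epsilon = \mathrm{id}$. I identify $i$ with the restriction of $\epsilon$ to $\Gamma$-invariants; it is then automatically injective. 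For surjectivity, each orbit sum $\upsilon_Y(\cO)$ is supported on $\cO \subseteq Y'$ and so lies in $\im\epsilon$. Since $\epsilon(R[[Y']])$ is closed in $R[[Y]]$ (as the continuous image of a complete space) and $R[[Y]]^\Gamma$ is the closure of $\im\upsilon_Y$ by Proposition \ref{FixedPoint}, I conclude $R[[Y]]^\Gamma \subseteq \im\epsilon \cap R[[Y]]^\Gamma = \im i$.

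For $j$, I first use a separation argument based on Lemma \ref{lem:EqRefsExist} to show that $\Gamma\backslash Y'$ with the quotient topology is profinite, and that its open partitions correspond under $\pi^{-1}$ to the saturated $\Gamma$-stable open partitions of $Y'$ (those on which $\Gamma$ acts trivially). The map $j$ is the continuous extension of $\upsilon_{Y'}$ to the completion $R[[\Gamma\backslash Y']]$; surjectivity follows from density of $\im\upsilon_{Y'}$ together with closedness of $\im j$ in the complete space $R[[Y']]^\Gamma$, while injectivity I would verify via the projective limit description.

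The hardest step will be reconciling the two presentations underlying $j$: $R[[Y']]^\Gamma = \varprojlim_\cQ R[\Gamma\backslash\cQ]$ with the scaled transitions $\sigma_{\cP\cQ}$ of \eqref{f:sigmaQR}, versus $R[[\Gamma\backslash Y']] = \varprojlim_\cR R[\cR]$ with standard refinement. The key mechanism is Lemma \ref{lem:currents}: in characteristic $p$ with $\Gamma$ pro-$p$, only the ``stable'' refinements with $|\cB|=|\cC|$ contribute to coherency for $\sigma$, and these precisely match the trivial-action refinement structure on the saturated partitions, making the two inverse systems compatible after passage to the limit.
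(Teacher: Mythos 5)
Your overall strategy is the same as the paper's: place $i$ and $j$ in the commutative triangle with $\upsilon_Y$ and $\upsilon_{\Delta_\Gamma(Y)}$ (both of which have dense image by Prop.~\ref{FixedPoint}), and conclude by checking that $i$ and $j$ are injective with closed image. Where the paper simply cites ``functoriality of $Y\mapsto R[[Y]]$'' for closedness, your treatment of $i$ is a genuine improvement: the restriction $\rho\colon R[[Y]]\to R[[Y']]$ and extension-by-zero $\epsilon\colon R[[Y']]\to R[[Y]]$ do exist (the cofinality point about restricting $\Gamma$-stable open partitions of $Y$ to $Y'$ is exactly the right thing to check), and $\rho\circ\epsilon=\mathrm{id}$ is the honest reason $\im\epsilon$ is closed --- it is the equalizer of $\mathrm{id}$ and $\epsilon\circ\rho$ inside a Hausdorff space. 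Your stated justification, ``the continuous image of a complete space is closed,'' is false in general and should be replaced by the retraction argument; the conclusion you need is correct, but the reasoning you give for it is not.

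For $j$, however, there is a real gap. You correctly identify that $R[[Y']]^\Gamma = \varprojlim_\cQ R[\Gamma\backslash\cQ]$ with the scaled transition maps $\sigma_{\cP\cQ}$ of \eqref{f:sigmaQR} versus $R[[\Gamma\backslash Y']]=\varprojlim_\cR R[\cR]$ with ordinary refinement maps are genuinely different inverse systems, but your claim that $j$ is ``the continuous extension of $\upsilon_{Y'}$'' presupposes that $\upsilon_{Y'}$ is continuous for these topologies. This is not automatic: the projection of $\upsilon_{Y'}(\cO)$ to $R[\Gamma\backslash\cQ]$ is $\frac{|\cO|}{|\cC_0|}\cdot\cC_0$ (with $\cC_0$ the $\Gamma$-orbit of the block of $\cQ$ meeting $\cO$), and in characteristic $p$ this equals $\cC_0$ or $0$ according to whether $|\cO|=|\cC_0|$ or not. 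The locus $\{\cO : |\cO|=|\cC_0(\cO)|\}$ is governed by a fixed-point condition $y\in Q_0^{\Gamma_{Q_0}}$ which is closed but need not be open, so this map need not factor through any finite quotient $R[\cR]$ of $R[\Gamma\backslash Y']$. (Consider, for a concrete test, $\Gamma=\mathbb{Z}/p$ acting on $Y'=\{0\}\cup\{x_m^i : m\geq 1,\, 1\leq i\leq p\}$ where each cluster $\{x_m^1,\ldots,x_m^p\}$ is a free orbit and $x_m^i\to 0$; here $\Delta_\Gamma(Y')=Y'$ is closed, yet one can check directly that $\upsilon_{Y'}$ is not continuous.) So before you can speak of ``closedness of $\im j$ in the complete space $R[[Y']]^\Gamma$,'' you must first say precisely what $j$ is and why it is defined on all of $R[[\Gamma\backslash Y']]$. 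Your remark that Lemma~\ref{lem:currents} shows the ``stable'' refinements match the trivial-action refinement structure is the right intuition for where the characteristic-$p$ hypothesis enters, but as written it is a gesture, not an argument, and it does not by itself address continuity of $\upsilon_{Y'}$ or closedness of $\im j$. You should either give an explicit inverse-system description of $j$ and verify that it is well-defined and has closed image, or else deduce $j$ from $i$ (for instance by first proving $i$ is an isomorphism using your $\rho,\epsilon$, and then identifying $R[[\Gamma\backslash Y']]$ with $R[[Y']]^{\Gamma}$ through some auxiliary map whose existence and properties you establish directly).
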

\begin{proof} The first assertion is clear since any closed subspace of a profinite set is again profinite. The two maps in question appear in the following commutative diagram
\[\xymatrix{ R[\Gamma \backslash \Delta_\Gamma(Y)] \ar[rr] \ar[d]_{\upsilon_Y} \ar[rrd]_{\upsilon_{\Delta_\Gamma(Y)}} && R[[\Gamma \backslash \Delta_\Gamma(Y)]] \ar[d]^j \\
R[[Y]]^\Gamma  && R[[\Delta_\Gamma(Y)]]^\Gamma\ar[ll]^{i}
}\]
and are both injective. The maps $\upsilon_Y$ and $\upsilon_{\Delta_\Gamma(Y)}$ both have dense image by Prop. \ref{FixedPoint}, since $\Delta_\Gamma(Y)$ is profinite and since $\Delta_\Gamma(\Delta_\Gamma(Y)) = \Delta_\Gamma(Y)$. The maps $i$ and $j$ have closed image, by the functoriality of the $Y \mapsto R[[Y]]$ construction. Hence they are both isomorphisms.
\end{proof}

%From Cor. \ref{cor:GenInv}, we see that the permutation module $R[[Y]]$ carries a natural pseudocompact linear topology, since each $k[\Gamma \backslash \cQ]$ is a finite dimensional $k$-vector space for each $\cQ \in \mathfrak{Q}$.
In general, $\Delta_\Gamma(Y)$ need not be closed in $Y$ as the following example shows.

\begin{example} Let $\Gamma = \mathbb{Z}_p$. For each $n \geq 0$, let $Y_n$ be the disjoint union of $n+1$ $\Gamma$-orbits $Y_n(0), Y_n(1), \cdots, Y_n(n)$, where $|Y_n(m)| = p^m$ for all $n \geq m \geq 0$. Choose base points $y_n(m) \in Y_n(m)$ for each $n \geq m \geq 0$. For each $n \geq 0$, define $\pi_{n+1} : Y_{n+1} \to Y_n$ to be the unique $\Gamma$-equivariant map such that
\[ \pi_{n+1}(y_{n+1}(m)) = \left\{\begin{array}{lll}  y_n(m) & \quad \mbox{if} \quad & n \geq m \geq 0 \\ y_n(n) & \quad \mbox{if} \quad & m = n+1 .\end{array} \right.\]
The restriction of $\pi_{n+1}$ to $Y_{n+1}(m)$ is an isomorphism onto $Y_n(m)$ whenever $n \geq m \geq 0$. The projective limit $Y = \varprojlim Y_n$ is naturally a profinite set equipped with a continuous $\Gamma$-action.
\[\xymatrix{ Y_0(0) \\ Y_1(0) \ar[u] & Y_1(1) \ar[ul] \\ Y_2(0) \ar[u] & Y_2(1) \ar[u]& Y_2(2) \ar[ul]\\  Y_3(0) \ar[u]& Y_3(1) \ar[u]& Y_3(2)  \ar[u]& Y_3(3) \ar[ul] \\ \vdots \ar[u] & \vdots \ar[u] & \vdots \ar[u] & \vdots \ar[u]& \vdots \ar[ul] }\]
For each $m \geq 0$, let $y_\infty(m) := (y_n(m))_{n = 0}^\infty \in Y$. Then the $\Gamma$-orbit of $y_\infty(m)$ has size exactly $p^m$ for all $m \geq 0$, so that $y_\infty(m) \in \Delta_\Gamma(Y)$ for all $m \geq 0$.

Suppose now that $z \in Y$ has infinite $\Gamma$-orbit. Write $z = (z_n)_{n=0}^\infty$ where $z_n \in Y_n$ and suppose for a contradiction that for some $n \geq 0$, $z_n \notin Y_n(n+1)$. Then $z_n \in Y_n(m)$ for some $m \leq n$. Then for all $k \geq n$, $z_k$ must lie in the preimage of $Y_n(m)$ in $Y_k$, which is $Y_k(m)$ by construction. But then $p^m \mathbb{Z}_p$ fixes $z_k$ for all $k \geq n$ and hence $p^m \mathbb{Z}_p$ fixes $z$ as well. This contradicts the hypothesis that the $\Gamma$-orbit of $z$ was infinite. Therefore $z_n$ must lie in $Y_n(n+1)$ for each $n \geq 0$, and we conclude that $Y \backslash \Delta_\Gamma(Y)$ consists of a single infinite $\Gamma$-orbit. Since this orbit is necessarily closed, it follows that $\Delta_\Gamma(Y) = \bigcup\limits_{m = 0}^\infty \Gamma \cdot y_\infty(m)$ is open.

Finally, for any $n \geq 0$, the image of both $z$ and $y_\infty(n+1)$ in $Y_n$ is contained in $Y_n(n+1)$. Hence the image of $z$ in $Y_n$ is contained in the image of $\Delta_\Gamma(Y)$. Hence $z$ lies in the closure of $\Delta_\Gamma(Y)$, but $z \notin \Delta_\Gamma(Y)$. So, $\Delta_\Gamma(Y)$ is not closed.
\end{example}

\section{Twisted actions on completed group rings}\label{sec:twisted-action}

From here on we assume that the field $k$ has \textbf{characteristic $p$.} We keep our fixed compact open subgroup $U$ of $G$ but assume that it is pro-$p$, and we let $w \in G$ be any fixed element. Recall that $U_w = U \cap wUw^{-1}$ is an open subgroup of $U$.
We introduce the twisted conjugation action of $U_w$ on $U$, given by
\begin{align*}
   U_w \times U & \longrightarrow U  \\
          (u,x) & \longmapsto (w^{-1} u w) x u^{-1} \ .
\end{align*}
Let $\Delta_{U_w}(U)$ be the set of finite $U_w$-orbits in $U$ under this action and for any such orbit $\cO$ recall that $\upsilon_U(\cO) \in k[U]$ denotes the sum of all its elements.
This action extends to a twisted conjugation action of $U_w$ on $k[[U]]$, and $k[[U]]_w = k[[U]]^{U_w, *}$ simply is the subspace of fixed points under this action. Since $U$ is pro-$p$, Prop.\ \ref{FixedPoint} gives the following result.

\begin{proposition}\label{prop:Ard}
   The $k$-vector subspace of $k[U]$ spanned by the orbit sums $\upsilon_U(\cO)$ for $\cO \in \Delta_{U_w}(U)$ is dense in $k[[U]]^{U_w, *} = k[[U]]_w$.
\end{proposition}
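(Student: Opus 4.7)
The plan is to deduce this as a direct application of Proposition \ref{FixedPoint}, applied with the pro-$p$ group $\Gamma := U_w$, the profinite set $Y := U$, and the coefficient ring $R := k$. First I would verify the hypotheses of that proposition: $U_w = U \cap wUw^{-1}$ is open in the pro-$p$ group $U$ and hence is itself pro-$p$, and the twisted conjugation action $U_w \times U \to U$, $(u,x) \mapsto (w^{-1}uw) x u^{-1}$, is manifestly continuous, being assembled from the continuous operations of $U$ together with conjugation by the fixed element $w$.

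The main point to check is that the abstract permutation module $k[[Y]]$ of \S\ref{sec:FixPtsPMM} coincides with the completed group ring $k[[U]] = \varprojlim_N k[U/N]$, where $N$ runs over the open normal subgroups of $U$, and that the induced $U_w$-actions match up. Any coset partition of $U$ by an open normal subgroup $N$ is $U_w$-stable, since $N$ is normal in $U$, so such partitions belong to the indexing set $\mathfrak{Q}$. Conversely, given any $\cQ \in \mathfrak{Q}$, each $Q \in \cQ$ is open and compact and is therefore a finite union of cosets of a sufficiently small open normal subgroup of $U$; a common such subgroup that works simultaneously for the finitely many members of $\cQ$ yields a refining coset partition. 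Hence coset partitions are cofinal in $\mathfrak{Q}$ and the two inverse limits agree naturally as topological $k$-modules. The $U_w$-action induced on $k[[Y]]$ by functoriality of the permutation module construction is, by design, the continuous extension of the twisted conjugation action of $U_w$ on $U$; in view of the identification \eqref{f:twisted} this gives $k[[Y]]^{U_w} = k[[U]]^{U_w,\,\ast} = k[[U]]_w$.

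With these identifications in place, Proposition \ref{FixedPoint} immediately yields that the image of $\upsilon_U : k[U_w \backslash \Delta_{U_w}(U)] \to k[[U]]_w$ is dense, which is exactly the claim. There is essentially no obstacle beyond the bookkeeping of definitions, since the content of the result is precisely Proposition \ref{FixedPoint} once the correct profinite-set framework for the twisted conjugation action has been set up; if anything, the only slightly subtle step is the verification above that coset partitions by open normal subgroups of $U$ are cofinal in the $U_w$-stable open partitions of $U$.
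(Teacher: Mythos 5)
Your proposal is correct and follows exactly the route the paper intends: Proposition \ref{prop:Ard} is presented there as an immediate consequence of Proposition \ref{FixedPoint} applied to the twisted conjugation action of the pro-$p$ group $U_w$ on the profinite set $U$. The paper leaves the identification of the permutation module $k[[Y]]$ with the completed group ring $k[[U]]$ implicit, whereas you spell out the cofinality of coset partitions by open normal subgroups of $U$ inside $\mathfrak{Q}$ and the resulting match of $U_w$-actions; this is precisely the bookkeeping the paper omits, and you verify it correctly.
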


Next we make the following simple observation.

\begin{lemma}\label{lem:centraliser}
  The $U_w$-orbit of an element $x \in U$ under the twisted action is finite if and only if the centraliser $C_G(wx)$ of the element $wx \in G$ is open in $G$.
\end{lemma}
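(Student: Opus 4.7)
The plan is to identify the stabiliser of $x \in U$ under the twisted $U_w$-action with $U_w \cap C_G(wx)$, and then exploit the fact that $U_w$ is both compact and open in $G$ to pass between openness in $U_w$ and openness in $G$.

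First I would compute the stabiliser directly. For $u \in U_w$, the condition $(w^{-1}uw)xu^{-1} = x$ rearranges as $u(wx) = (wx)u$, so the stabiliser of $x$ is exactly $\Stab_{U_w}(x) = U_w \cap C_G(wx)$. This is the key algebraic manipulation, and it is elementary.

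Next, since $U$ is profinite and $U_w = U \cap wUw^{-1}$ is closed in $U$, the group $U_w$ is itself profinite. The stabiliser $U_w \cap C_G(wx)$ is closed in $U_w$ because $C_G(wx)$ is closed in $G$ (centralisers of points are closed in a Hausdorff topological group). Therefore the $U_w$-orbit of $x$ is finite if and only if $U_w \cap C_G(wx)$ has finite index in $U_w$, if and only if $U_w \cap C_G(wx)$ is open in $U_w$, using the standard fact that a closed subgroup of a profinite group is open precisely when it has finite index.

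Finally, I would observe that $U_w$ is open in $G$: $U$ is open in $G$ by assumption, and $wUw^{-1}$ is open as the conjugate of an open subgroup, hence so is the intersection. Consequently $U_w \cap C_G(wx)$ is open in $U_w$ if and only if it is open in $G$, and since it is a subgroup contained in $C_G(wx)$, this happens if and only if $C_G(wx)$ is open in $G$ (a subgroup of a topological group is open iff it contains a nonempty open subset). Chaining these equivalences yields the lemma.

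No single step looks like a genuine obstacle here; the content is really just the computation $u \cdot x = x \Leftrightarrow u \in C_G(wx)$ together with standard profinite bookkeeping, so the main thing to get right is simply being careful that openness in $U_w$ and openness in $G$ coincide for subgroups of $U_w$, which follows from $U_w$ being open in $G$.
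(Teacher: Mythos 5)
Your proposal is correct and follows essentially the same route as the paper: both identify the stabiliser of $x$ under the twisted action with $C_G(wx) \cap U_w$ via the rearrangement $(w^{-1}uw)xu^{-1} = x \Leftrightarrow u(wx) = (wx)u$, and then translate between openness in $U_w$ and openness in $G$ using that $U_w$ is compact open. Your version merely spells out the profinite bookkeeping (closed subgroup of finite index is open) a bit more explicitly than the paper, which compresses it into a single ``obviously.''
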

\begin{proof}
Obviously the orbit of $x$ is finite if and only if the stabiliser $S_x := \{u \in U_w : xu = (w^{-1} u w)x \}$ is open in $U_w$. But for a general $g \in G$ (with $w,x$ fixed) we have $xg = (w^{-1} g w)x$ if and only if $(wx)g = g(wx)$ if and only if $g \in C_G(wx)$. So, $S_x = C_G(wx) \cap U_w$. Thus, the orbit of $x$ is finite if and only if $S_x$ is open in $U_w$ if and only if $C_G(wx)$ is open in $G$.
\end{proof}

\begin{lemma}\label{rem:centers}
  Suppose that $G$ contains no proper open centralisers. Then $Z(U) = Z(G) \cap U$.
\end{lemma}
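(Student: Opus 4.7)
The plan is to prove this by a direct two-step argument, noting that the inclusion $Z(G) \cap U \subseteq Z(U)$ is immediate (anything commuting with all of $G$ commutes with all of $U$), so the real content is the reverse inclusion. For this, I would take $z \in Z(U)$ and show $z \in Z(G)$ by producing $C_G(z)$ as an open subgroup of $G$ and then invoking the hypothesis on open centralisers.

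Specifically, given $z \in Z(U)$, every $u \in U$ commutes with $z$, so $U \subseteq C_G(z)$. Since $U$ is open in $G$, this forces $C_G(z)$ to be an open subgroup of $G$. The hypothesis that $G$ has no proper open centralisers then yields $C_G(z) = G$, i.e.\ $z \in Z(G)$; combined with $z \in U$ this gives $z \in Z(G) \cap U$, completing the proof.

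There is no genuine obstacle here: the argument is essentially just unwinding the definitions, and the only point requiring care is the correct reading of the hypothesis, namely that for every $g \in G$ the centraliser $C_G(g)$ being an open subgroup forces $C_G(g) = G$, or equivalently $g \in Z(G)$. That reading is exactly what makes $U \subseteq C_G(z)$ sufficient to conclude $z \in Z(G)$, since openness of $U$ is transferred to openness of the larger subgroup $C_G(z)$.
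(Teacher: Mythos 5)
Your proof is correct and is essentially identical to the paper's own argument: both note $Z(G)\cap U\subseteq Z(U)$ is trivial and, for the converse, use $U\subseteq C_G(z)$ to conclude $C_G(z)$ is open and hence equal to $G$.
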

\begin{proof}
Obviously $Z(G) \cap U \subseteq Z(U)$. Let therefore $x \in Z(U)$. Then $U \subseteq C_G(x)$ so that $C_G(x)$ is open in $G$. Since $G$ contains no proper open centralisers, we conclude that $x \in Z(G)$.
\end{proof}

We assume from now on that $G$ contains no proper open centralisers.

\begin{proposition}\label{prop:twistedDelta}
For any $w \in \cW$, we have $\Delta_{U_w}(U) = U \cap w^{-1} Z(G)$.
\end{proposition}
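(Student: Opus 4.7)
The plan is to reduce the claim directly to the two preceding lemmas in this section. By Lemma \ref{lem:centraliser}, an element $x \in U$ lies in $\Delta_{U_w}(U)$ precisely when $C_G(wx)$ is open in $G$. This is the only non-trivial input: it translates the orbit-finiteness condition into an openness condition on a centraliser in $G$.

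Next I would invoke the standing hypothesis that $G$ contains no proper open centralisers. This hypothesis, as already used in the proof of Lemma \ref{rem:centers}, means exactly that for any $g \in G$, if $C_G(g)$ is open in $G$ then $C_G(g) = G$, i.e.\ $g \in Z(G)$. Applying this with $g = wx$ yields the chain of equivalences
\[ x \in \Delta_{U_w}(U) \iff C_G(wx) \text{ is open in } G \iff wx \in Z(G) \iff x \in w^{-1} Z(G), \]
and intersecting with $U$ on both sides gives $\Delta_{U_w}(U) = U \cap w^{-1} Z(G)$, which is the desired identity.

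I do not foresee any real obstacle: the combinatorial heart of the matter sits in Lemma \ref{lem:centraliser}, and the translation from open centralisers to central elements is entirely formal given the hypothesis on $G$. The one small check worth flagging is that both inclusions are obtained simultaneously from the equivalence, so no separate argument for containment in either direction is required.
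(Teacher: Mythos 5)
Your proof is correct and follows the same route as the paper: reduce to Lemma \ref{lem:centraliser}, which converts orbit-finiteness into openness of $C_G(wx)$, and then apply the no-proper-open-centralisers hypothesis to identify that condition with $wx \in Z(G)$. The chain of equivalences you write is exactly the paper's argument.
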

\begin{proof} The $U_w$-orbit of $x \in U$ is finite if and only if the centraliser $C_G(wx)$ is open in $G$ by Lemma \ref{lem:centraliser}. But this is equivalent to $wx \in Z(G)$ because $G$ contains no proper open centralisers.
\end{proof}

\begin{corollary}\label{cor:CalcTwDelta}
Let $w \in \cW$. Then
\begin{equation*}
  \Delta_{U_w}(U) =
  \begin{cases}
     U \cap Z(G) & \text{if $w \in Z(G)$},  \\
     \emptyset & \text{if $w \not\in Z(G)U$}.
  \end{cases}
\end{equation*}
\end{corollary}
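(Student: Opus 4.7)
The plan is to deduce this corollary directly from Proposition \ref{prop:twistedDelta}, which already identifies $\Delta_{U_w}(U)$ with $U \cap w^{-1} Z(G)$. So the content of the corollary amounts to simplifying the set $U \cap w^{-1} Z(G)$ in the two cases on $w$, and no further use of the pro-$p$ or centraliser hypotheses is needed beyond what already powered Prop.\ \ref{prop:twistedDelta}.

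For the first case, I would simply observe that if $w \in Z(G)$ then $w^{-1} \in Z(G)$ as well, so $w^{-1} Z(G) = Z(G)$ since $Z(G)$ is a subgroup of $G$. Substituting into Prop.\ \ref{prop:twistedDelta} gives $\Delta_{U_w}(U) = U \cap Z(G)$ immediately.

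For the second case, I would argue by contrapositive: if $\Delta_{U_w}(U) \neq \emptyset$, then by Prop.\ \ref{prop:twistedDelta} there exists $x \in U$ with $x \in w^{-1} Z(G)$, which means $wx \in Z(G)$, i.e.\ $w \in Z(G) x^{-1} \subseteq Z(G) U$ (using that $x \in U$ implies $x^{-1} \in U$). Hence if $w \notin Z(G) U$, the set $\Delta_{U_w}(U)$ is forced to be empty.

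There is no main obstacle here — the entire content of the corollary is a two-line set-theoretic rearrangement of the identity established in Prop.\ \ref{prop:twistedDelta}, merely organised so as to highlight the two extreme cases (fully central $w$ versus $w$ entirely outside the translate $Z(G)U$) that will be relevant in the subsequent application. The intermediate case $w \in Z(G)U \setminus Z(G)$ is implicit and can be handled identically by writing $w = zu$ with $z \in Z(G)$, $u \in U$, which gives $\Delta_{U_w}(U) = U \cap u^{-1} Z(G) = u^{-1}(U \cap Z(G))$, but the corollary as stated only needs the two cases displayed.
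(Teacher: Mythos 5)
Your proposal is correct and takes essentially the same route as the paper, which simply notes that the corollary follows immediately from Proposition \ref{prop:twistedDelta}; your write-up just spells out the two elementary set-theoretic rearrangements underlying that remark.
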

\begin{proof}
This follows immediately from Prop. \ref{prop:twistedDelta}.
\end{proof}
For convenience we choose a set of representatives $\cZ \subseteq Z(G)$ for the double cosets of $U$ contained in $Z(G)U$. We further assume that $\cZ \subseteq \cW$.

\begin{proposition}\label{prop-Ard2}
For any $w \in \cW$, we have
\begin{equation*}
  k[[U]]_w =
  \begin{cases}
  k[[Z(U)]] & \text{if $w \in \cZ$}, \\
  0   & \text{if $w \notin \cZ$}.
  \end{cases}
\end{equation*}
\end{proposition}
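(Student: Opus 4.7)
The plan is to combine the density statement of Proposition~\ref{prop:Ard} with the calculation of $\Delta_{U_w}(U)$ from Corollary~\ref{cor:CalcTwDelta}, splitting on whether $w \in \cZ$. A useful preliminary observation is that $k[[U]]_w$ is a closed subspace of $k[[U]]$: for each $u \in U_w$ the map $\lambda \mapsto \lambda u - (w^{-1}uw)\lambda$ is continuous, and $k[[U]]_w$ is the intersection of their kernels.

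Case $w \in \cW \setminus \cZ$: Since $\cZ \subseteq \cW$ and both consist of one representative per $(U,U)$-double coset, with $\cZ$ covering precisely those contained in $Z(G)U$, the assumption $w \notin \cZ$ forces $w \notin Z(G)U$. Corollary~\ref{cor:CalcTwDelta} then yields $\Delta_{U_w}(U) = \emptyset$, Proposition~\ref{prop:Ard} says the zero subspace is dense in $k[[U]]_w$, and closedness forces $k[[U]]_w = 0$. Case $w \in \cZ$: Here $w \in Z(G)$, so $U_w = U \cap wUw^{-1} = U$ and the twisted action $(u,x) \mapsto (w^{-1}uw)xu^{-1}$ reduces to ordinary conjugation. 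Corollary~\ref{cor:CalcTwDelta} together with Lemma~\ref{rem:centers} gives $\Delta_{U_w}(U) = U \cap Z(G) = Z(U)$, and each conjugation orbit inside $Z(U)$ is a singleton, so the span of orbit sums is exactly $k[Z(U)]$; Proposition~\ref{prop:Ard} then shows $k[Z(U)]$ is dense in $k[[U]]_w$. What remains, and is in my view the main step, is to identify the closure of $k[Z(U)]$ inside $k[[U]]$ with $k[[Z(U)]]$.

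For that identification, I would use that $Z(U)$ is a closed subgroup of the pro-$p$ group $U$, so the subgroups $\{Z(U) \cap N\}$, as $N$ ranges over open normal subgroups of $U$, are cofinal among open subgroups of $Z(U)$, and each quotient inclusion $Z(U)/(Z(U) \cap N) \hookrightarrow U/N$ induces an injection $k[Z(U) N/N] \hookrightarrow k[U/N]$. Passing to inverse limits identifies $k[[Z(U)]]$ with the subspace of coherent families $(f_N)_N \in k[[U]] = \varprojlim_N k[U/N]$ whose components lie in $k[Z(U) N/N]$, and this is precisely the description of the closure of $k[Z(U)]$ in the pro-discrete topology on $k[[U]]$. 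Combining these observations yields $k[[U]]_w = \overline{k[Z(U)]} = k[[Z(U)]]$. The subtlety worth flagging is that at any single level $k[U/N]^U$ may strictly contain $k[Z(U) N/N]$; the coherence condition across the tower, which is exactly the content of Proposition~\ref{FixedPoint} underlying Proposition~\ref{prop:Ard}, is what eliminates such extraneous fixed points in the limit.
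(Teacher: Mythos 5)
Your proof is correct and reaches the stated conclusion by a slightly different but valid route. The paper first checks that $\Delta_{U_w}(U)$ is closed in $U$ --- immediate from Proposition~\ref{prop:twistedDelta}, since $U \cap w^{-1}Z(G)$ is manifestly closed --- and then invokes Corollary~\ref{cor:OrbitSumsGen} to obtain $k[[U]]_w \cong k[[\Delta_{U_w}(U)]]^{U_w,\ast}$ in a single stroke; from there the two cases ($\Delta_{U_w}(U)=\emptyset$ when $w\notin\cZ$, respectively $\Delta_{U_w}(U)=Z(U)$ with trivial conjugation action when $w\in\cZ$) read off the answer. You instead bypass that corollary: you start directly from the density statement of Proposition~\ref{prop:Ard}, observe that $k[[U]]_w$ is a closed subspace of $k[[U]]$ (as an intersection of kernels of the continuous maps $\lambda \mapsto \lambda u - (w^{-1}uw)\lambda$), and for $w\in\cZ$ compute the closure of $k[Z(U)]$ in $k[[U]]$ explicitly, using cofinality of $\{Z(U)\cap N\}_N$ among open subgroups of $Z(U)$ together with the standard description $\overline{S} = \bigcap_N \pi_N^{-1}(\pi_N(S))$ of closures in an inverse limit of discrete modules. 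This is sound and self-contained; in effect you are re-proving the specialisation of Corollary~\ref{cor:OrbitSumsGen} to the case of a trivial action on $\Delta_\Gamma(Y)$. The paper's route is a shorter application of general machinery that would also handle a non-trivial orbit structure on $\Delta$, while your route is more elementary and makes the relevant pro-discrete topology explicit.
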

\begin{proof}
Prop. \ref{prop:twistedDelta} implies that $\Delta_{U_w}(U)$ is closed in $U$, so we may apply Cor. \ref{cor:OrbitSumsGen} to obtain
\[ k[[U]]_w = k[[U]]^{U_w, \ast} = k[[\Delta_{U_w}(U)]]^{U_w,\ast}.\]
Since $\Delta_{U_w}(U) = \emptyset$ when $w \notin \cZ$ by Cor. \ref{cor:CalcTwDelta}, we may assume that $w \in \cZ$. But then since $w \in Z(G)$, we have $U_w = U$ and the twisted action is the usual conjugation action of $U$ on itself. The result follows, because $\Delta_{U_w}(U) = Z(U)$ by Cor. \ref{cor:CalcTwDelta} and Lemma \ref{rem:centers}.
\end{proof}

Write $Z := Z(G)$. The following is straightforward.

\begin{lemma}\label{lem:ResSurj} The restriction map $\Res^G_{Z} : C^\infty_c(G,k) \to C^\infty_c(Z,k)$ is surjective, and $\widehat{k[Z]}$-equivariant.
\end{lemma}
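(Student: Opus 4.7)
The plan is to establish the two assertions separately, handling the equivariance as a formality and devoting more care to the surjectivity as a topological construction.

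For $\widehat{k[Z]}$-equivariance, I would first verify that the restriction map is $Z$-equivariant for the left-translation actions: for any $z \in Z$ and $F \in C^\infty_c(G,k)$ and any $x \in Z$, we have $({}^{z_\ell}F)|_Z(x) = F(z^{-1}x) = {}^{z_\ell}(F|_Z)(x)$. Given $F \in C^\infty_c(G,k)$ fixed by some compact open subgroup $U$ of $G$, the restriction $F|_Z$ is fixed by the compact open subgroup $Z \cap U$ of $Z$. By the construction in Lemma \ref{lem:k[[ZG]]action}, the $\widehat{k[Z]}$-actions on $F$ and on $F|_Z$ both factor through the canonical map $\widehat{k[Z]} \to k[Z/(Z\cap U)]$, and the resulting $k[Z/(Z\cap U)]$-actions intertwine by the $Z$-equivariance just verified. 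Taking the union over $U \in \Op_c(G)$ gives $\widehat{k[Z]}$-equivariance on all of $C^\infty_c(G,k)$.

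For surjectivity, since $C^\infty_c(Z,k)$ is $k$-linearly spanned by the characteristic functions $\chara_A$ of compact open subsets $A \subseteq Z$, it suffices to exhibit, for every such $A$, a compact open subset $C$ of $G$ with $C \cap Z = A$; then $\chara_C \in C^\infty_c(G,k)$ restricts to $\chara_A$. I would construct $C$ in three steps. First, since $G$ is locally profinite, every point of $Z$ has a basis of compact open neighbourhoods in $G$, so by compactness of $A$ there is a compact open subset $C_0 \subseteq G$ containing $A$. Second, set $D := C_0 \cap Z$; since $Z$ is closed in $G$ (as an intersection of centralisers, each of which is closed in any topological group), $D$ is a compact open subset of $Z$ containing $A$, and $D \setminus A$ is compact. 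Third, observe that $A$ is closed in $G$ (compact in a Hausdorff space), so for each $y \in D \setminus A$ we can find a compact open neighbourhood of $y$ in $G$ contained in $C_0$ and disjoint from $A$; extracting a finite subcover $W$ of the compact set $D \setminus A$ and setting $C := C_0 \setminus W$ yields a compact open subset of $G$ with $C \cap Z = D \setminus (W \cap Z) = A$, as required.

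The main technical point is the topological construction in the surjectivity step, which relies on the closedness of $Z(G)$ in $G$ together with the local profiniteness of $G$ to produce the requisite compact open neighbourhoods separating $A$ from $D \setminus A$. The equivariance step is essentially formal and reduces to the compatibility of left translation with restriction.
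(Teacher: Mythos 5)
The paper gives no proof for this lemma, simply calling it ``straightforward,'' so there is no written proof to compare against. Your argument correctly supplies the details of what is intended, and both halves are sound. The equivariance reduction is exactly right: the left-translation action on $C^\infty_c(G,k)$ restricts to the left-translation action on $C^\infty_c(Z,k)$, and since a function fixed by $U_\ell$ restricts to a function fixed by $(Z\cap U)_\ell$, both $\widehat{k[Z]}$-actions factor through the same quotient $k[Z/(Z\cap U)]$, where the intertwining is the pointwise computation you wrote. For surjectivity, the reduction to characteristic functions of compact open subsets of $Z$ is standard, and your separation argument is valid: $Z$ is closed in the Hausdorff group $G$ (an intersection of centralisers, each the preimage of $\{e\}$ under a continuous map), so $D = C_0 \cap Z$ is compact open in $Z$, $D\setminus A$ is compact, and since $G$ is locally profinite the totally disconnected locally compact structure provides the compact open neighbourhoods used to peel $D\setminus A$ away from $A$ inside $C_0$. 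The final bookkeeping $C\cap Z = D\setminus(W\cap D) = A$ checks out because $W$ covers $D\setminus A$ and is disjoint from $A$. In short, this is a correct and natural proof of the lemma.
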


Let $C := \ind_{U_\ell}^{G_\ell} (C^\infty(U,k))$ and let $C_Z := \ind_{Z(U)_\ell}^{Z_\ell} (C^\infty(Z(U),k))$.  From \eqref{f:OmegaU} together with Prop. \ref{prop-Ard2} we have the injective map
\begin{equation*}
  \Omega_U : \End_{\Mod(G \times U)} (C) \longrightarrow \prod\limits_{z \in \cZ} \End_{\Mod(Z(U))} (C^\infty(Z(U),k)) = \prod\limits_{z \in \cZ} k[[Z(U)]] \ .
\end{equation*}
On the other hand, since $\cZ$ is a transversal for $Z(U)$ in $Z$, replacing $G$ by $Z$ gives us the map $\Omega_{Z(U)}$ out of $\End_{\Mod(Z)} (C_Z)$ with the same target. These two maps appear in the following diagram:
\begin{equation}\label{f:GtoZ}
\xymatrix{ \widehat{k[Z]} \ar[rr]^-{\Phi_C} \ar[d]_{
\Phi_{C_Z}}
 && \End_{\Mod(G \times G)}(C) \ar[r]^{\subseteq} & \End_{\Mod(G \times U)}(C) \ar[d]^{\Omega_U} \\ \End_{\Mod(Z)} (
 C_Z) \ar[rrr]_-{\Omega_{Z(U)}} &&& \prod\limits_{z \in \cZ} k[[Z(U)]]}
\end{equation}

\begin{lemma}\label{lem:DiagComm}
  The diagram \eqref{f:GtoZ} is commutative.
\end{lemma}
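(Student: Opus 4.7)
The plan is to reduce the commutativity of \eqref{f:GtoZ} to checking it on the subspace $k[Z] \subseteq \widehat{k[Z]}$ (where $Z := Z(G)$), then verify it pointwise on $z \in Z$ by explicit computation, and finally promote this to $\widehat{k[Z]}$ by continuity. For each $w \in \cZ$ I would write $\Psi_w$ and $\Psi'_w$ for the two $k$-linear maps $\widehat{k[Z]} \to k[[Z(U)]]$ obtained by following the two routes of \eqref{f:GtoZ} and projecting onto the $w$-component of the target product; the goal is then to show $\Psi_w = \Psi'_w$ for every $w \in \cZ$.

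For the pointwise verification on $z \in Z$, I would use the formula \eqref{f:Omega}: by Lemma \ref{lem:k[[ZG]]action} and Prop.\ \ref{prop:Phi_C}, $\Phi_C(z)$ acts on $C$ by left translation by $z$, which under the transitivity-of-induction isomorphism \textbf{A)} is $\Phi_C(z)(\phi)(g) = \phi(z^{-1}g)$, so
\[ \Omega_w(\Phi_C(z))(f) \;=\; \bigl(\Phi_C(z)\circ \iota(f)\bigr)(w) \;=\; \iota(f)(z^{-1}w).\]
Because $z, w \in Z$, Lemma \ref{rem:centers} gives $z^{-1}w \in U$ if and only if $z^{-1}w \in Z(U)$; in that case the expression equals $^{(w^{-1}z)_\ell} f$, and otherwise it vanishes. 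Under the identification $\End_{\Mod(U_r)}(C^\infty(U,k)) \cong k[[U_\ell]]$ this shows $\Psi_w(z)$ equals the element $w^{-1}z \in Z(U) \subseteq k[[Z(U)]]$ when $w^{-1}z \in Z(U)$, and $0$ otherwise. Running the identical computation with $(G,U)$ replaced by $(Z, Z(U))$ gives exactly the same value for $\Psi'_w(z)$, so $\Psi_w$ and $\Psi'_w$ coincide on $Z$, and hence by $k$-linearity on $k[Z]$.

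To extend the equality from $k[Z]$ to $\widehat{k[Z]}$ I would argue continuity of $\Psi_w$ and $\Psi'_w$. For every open subgroup $Z_1 \subseteq Z(U)$, the composition of $\Psi_w$ with the surjection $k[[Z(U)]] \twoheadrightarrow k[Z(U)/Z_1]$ factors through the quotient $\widehat{k[Z]} \twoheadrightarrow k[Z/Z_1]$, since the formula above shows that on $z \in Z$ the image $w^{-1}z Z_1$ (or $0$) depends only on $z Z_1 \in Z/Z_1$ (using $Z_1 \subseteq Z(U) \subseteq Z$); the same holds for $\Psi'_w$. Because $k[Z]$ is dense in $\widehat{k[Z]}$ and $k[[Z(U)]]$ is Hausdorff, this continuity lifts the agreement on $k[Z]$ to the full equality $\Psi_w = \Psi'_w$ on $\widehat{k[Z]}$.

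The main obstacle I anticipate is purely one of bookkeeping: correctly chasing the explicit formulas for $\Omega_U$, $\Omega_{Z(U)}$, $\Phi_C$, and $\Phi_{C_Z}$ through the isomorphisms \textbf{A)}--\textbf{D)}, and in particular verifying that the induced-module identification \textbf{A)} intertwines left translation by $z$ on both sides so that the central formula $\phi \mapsto \phi(z^{-1}\cdot)$ really is what \mbox{$\Phi_C(z)$} and $\Phi_{C_Z}(z)$ become. Once this identification is secured, both rows of \eqref{f:GtoZ} yield the same explicit element of $k[[Z(U)]]$ and the argument closes by the density/continuity step above.
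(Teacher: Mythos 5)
Your strategy — project onto each $w$-component of the target, compute the two maps $\Psi_w,\Psi'_w : \widehat{k[Z]}\to k[[Z(U)]]$ explicitly on group elements $z\in Z$, and then pass from $k[Z]$ to $\widehat{k[Z]}$ by density — is a genuinely different route from the paper's, and the pointwise computation is correct: for $z\in Z$ one indeed finds $\Psi_w(z)=\Psi'_w(z)= w^{-1}z$ when $w^{-1}z\in Z(U)$ and $=0$ otherwise, using $U\cap Z = Z(U)$ from Lemma~\ref{rem:centers}.

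The gap is in the continuity step. You claim that the composite $\widehat{k[Z]}\xrightarrow{\Psi_w} k[[Z(U)]]\twoheadrightarrow k[Z(U)/Z_1]$ factors through $\widehat{k[Z]}\twoheadrightarrow k[Z/Z_1]$ ``since the formula above shows'' it --- but the formula you derived applies only to $z\in Z$, and a linear map on $\widehat{k[Z]}$ is not determined by its restriction to the dense subspace $k[Z]$ unless you already know it to be continuous, which is exactly what you are trying to establish. This circularity has to be broken by verifying the factoring directly for general $x\in\widehat{k[Z]}$: for instance, from the construction in Lemma~\ref{lem:k[[ZG]]action}, the value $\Omega_w(\Phi_C(x))(f) = \Phi_C(x)(\iota(f))(w)$ depends only on the image of $x$ in $k[Z/(Z\cap U')]$ whenever $f$ is fixed by $(U')_\ell$ for some open $U'\leq U$; taking $f=\chara_{U'}$, whose $Z(U)_\ell$-translates are distinct precisely modulo $Z\cap U'$, then yields the needed factoring through $k[Z/(Z\cap U')]$. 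The paper sidesteps the issue entirely by never splitting off $k[Z]$: it constructs the auxiliary $\widehat{k[Z]}$-linear map $\psi = \Res^U_{Z(U)}\circ\Res^G_Z$ (the dotted arrow in \eqref{f:ResFI}) and runs a single chain of identities valid for all $x\in\widehat{k[Z]}$ at once, using only the linearity of $\psi$ and of $\Res^U_{Z(U)}$ together with the faithfulness of the $k[[Z(U)]]$-action on $C^\infty(Z(U),k)$. Your computation is more explicit, but as written the continuity step does not go through without the supplementary argument above.
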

\begin{proof}
First, consider the following diagram:
\begin{equation}\label{f:ResFI}
\xymatrix{ C^\infty(U,k) \ar[rr]^-{\iota^G}\ar[d]_{\Res^U_{Z(U)}} && \ind_{U_\ell}^{G_\ell} (C^\infty(U,k)) \ar[rr]^-{F_G}_-{\cong} \ar@{..>}[d]^\psi && C^\infty_c(G,k) \ar[d]^{\Res^G_Z} \\
C^\infty(Z(U),k) \ar[rr]_-{\iota^Z} && \ind_{Z(U)}^Z (C^\infty(Z(U),k)) \ar[rr]_-{F_Z}^-{\cong} && C^\infty_c(Z,k)
}\end{equation}
where $F_G$ and $F_Z$ are the maps of Transitivity of Induction from \eqref{f:trans-ind}, and $\iota^G$ and $\iota^Z$ are the Frobenius reciprocity embeddings appearing in \eqref{f:frob-rec}. From the definitions of $F$ and $\iota$ it is straightforward to verify that the composed horizontal maps are simply the extension by zero maps. As $Z(U) = U \cap Z$ by Lemma \ref{rem:centers} the outer rectangle is commutative. Since $F_G$ and $F_Z$ are isomorphisms in $\Mod(G_\ell)$ and since $\Res^G_Z : C^\infty_c(G,k) \to C^\infty_c(Z,k)$ is $\widehat{k[Z]}$-linear by Lemma \ref{lem:ResSurj}, there is a unique $\widehat{k[Z]}$-linear map $\psi$ making the entire diagram commutative. Note that the inverse of $F_G$ is given as follows: $F_G^{-1}(f)(g)(u) = f(gu)$ for all $f \in C^\infty_c(G,k), g \in G$ and $u \in U$. Now, given $q \in \ind_{U_\ell}^{G_\ell} C^\infty(U,k)$, $z \in Z$ and $y \in Z(U)$, we calculate
\begin{align*}
\psi(q)(z)(y) &= (F_Z^{-1} \circ \Res^G_Z \circ F_G)(q) (z)(y) = (\Res^G_Z F_G(q))(zy) \\
&= F_G(q)(zy) = q(zy)(1)= ({}^{y_\ell^{-1}}q(z))(1) = q(z)(y).
\end{align*}
Therefore the map $\psi$ is given on $q \in \ind_{U_\ell}^{G_\ell} (C^\infty(U,k))$ by the explicit formula
\begin{equation}\label{f:ResRes}
\psi(q) = \Res^U_{Z(U)} \circ \Res^G_Z q.
\end{equation}
Fix $x \in \widehat{k[Z]}$. Let $\alpha := \Phi_C(x)$ denote the action of $x$ on $C$, and let $\beta := \Phi_{C_Z}(x)$ denote the action of $x$ on $C_Z$. Using formula \eqref{f:Omega}, we see that to prove the Lemma, it will be enough to show that $\Omega_z(\alpha) = \Omega_z(\beta)$ for every $z \in \cZ$. To this end, fix $h \in C^\infty(Z(U),k)$ and choose $f \in C^\infty(U,k)$ such that $h = \Res^U_{Z(U)} f$ using Lemma \ref{lem:ResSurj} applied to $U$ in place of $G$. Then we can calculate as follows:
\begin{align*}
 \Omega_z(\beta)(h) & = \beta(\iota^Z(\Res^U_{Z(U)} f))(z)  \qquad\text{by \eqref{f:Omega}} \\
& = \beta(\psi (\iota^G f))(z)  \quad\qquad\qquad\text{by \eqref{f:ResFI}} \\
& = \psi(\alpha(\iota^G f))(z) \\
& = \Res^U_{Z(U)} (\alpha(\iota^G f)(z))  \,\qquad\text{by \eqref{f:ResRes}} \\
& = \Res^U_{Z(U)}  \Omega_z(\alpha)(f)    \quad \qquad\text{by \eqref{f:Omega}}  \\
& = \Omega_z(\alpha)(\Res^U_{Z(U)}f)\\
& = \Omega_z(\alpha)(h)
\end{align*}
where on the third line we used the $\widehat{k[Z]}$-linearity of $\psi$, and on the sixth line we used the $k[[Z(U)]]$-linearity of $\Res^U_{Z(U)} : C^\infty(U,k) \to C^\infty(Z(U),k)$. The result follows, since $k[[Z(U)]]$ acts faithfully on $C^\infty(Z(U),k)$.
\end{proof}

\begin{proposition}\label{prop:Main}
 Let $G$ be a locally pro-$p$ group which contains no proper open centralisers and let $k$ be a field of characteristic $p$. For $C := \ind_{U_\ell}^{G_\ell} (C^\infty(U,k))$ the natural map
\[ \Phi_C : \widehat{k[Z(G)]} \xrightarrow{\;\cong\;} \End_{\Mod(G \times G)}(C)\]
is an isomorphism.
\end{proposition}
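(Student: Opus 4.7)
The plan is as follows. Injectivity of $\Phi_C$ is immediate from Prop.\ \ref{prop:Phi_C}(1): the $G_\ell$-equivariant isomorphism $C \cong C_c^\infty(G,k)$ from \eqref{f:trans-ind} identifies $\Phi_C$ with the injection of that proposition. For surjectivity, I would chase the diagram \eqref{f:GtoZ}. Given $\alpha \in \End_{\Mod(G \times G)}(C)$, the aim is to exhibit $x \in \widehat{k[Z(G)]}$ such that $\Omega_U(\Phi_C(x)) = \Omega_U(\alpha)$; the injectivity of $\Omega_U$, which comes from the chain \eqref{f:mapchain} and \eqref{f:OmegaU}, will then force $\Phi_C(x) = \alpha$.

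Write $Z := Z(G)$. First, Prop.\ \ref{prop-Ard2} lets me write $\Omega_U(\alpha) = (\lambda_z)_{z \in \cZ}$ with $\lambda_z \in k[[Z(U)]]$ and all components outside $\cZ$ vanishing. Next, Prop.\ \ref{prop:commutative} applied to $Z$, combined with the transitivity isomorphism $C_Z \cong C_c^\infty(Z,k)$, shows that $\Phi_{C_Z} : \widehat{k[Z]} \xrightarrow{\cong} \End_{\Mod(Z)}(C_Z)$. Since $Z$ is commutative, left and right translations on $C_Z$ coincide up to inversion, so any endomorphism commuting with the left $Z$-action automatically commutes with the right $Z(U)$-action; hence $\End_{\Mod(Z)}(C_Z) = \End_{\Mod(Z \times Z(U))}(C_Z)$, and $\Omega_{Z(U)}$ restricts to an injection on $\End_{\Mod(Z)}(C_Z)$ whose image in $\prod_{z \in \cZ} k[[Z(U)]]$ is, by the analogue of \eqref{f:imOmegaU}, exactly the set of tuples $(\mu_z)_z$ such that for every $h \in C^\infty(Z(U),k)$, $\mu_z \cdot h = 0$ for all but finitely many $z$. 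Once I verify that $(\lambda_z)_z$ lies in this image, I can set $x := \Phi_{C_Z}^{-1}(\beta)$ where $\beta \in \End_{\Mod(Z)}(C_Z)$ satisfies $\Omega_{Z(U)}(\beta) = (\lambda_z)_z$, and Lemma \ref{lem:DiagComm} finishes the argument.

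The main obstacle is then promoting the finiteness condition from test functions in $C^\infty(U,k)$, supplied by \eqref{f:imOmegaU} applied to $\alpha$, to the analogous condition over the smaller space $C^\infty(Z(U),k)$. The tool I would use is the restriction map $\Res^U_{Z(U)} : C^\infty(U,k) \to C^\infty(Z(U),k)$, which I claim is surjective and $k[[Z(U)]]$-equivariant. Surjectivity holds because any $h \in C^\infty(Z(U),k)$ factors through some finite quotient $Z(U)/(Z(U) \cap U')$ for an open subgroup $U' \subseteq U$, and this finite set injects into $U/U'$, so extending $h$ by zero along this injection produces a lift $f \in C^\infty(U,k)$ with $\Res^U_{Z(U)}(f) = h$. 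Equivariance for the left-translation $Z(U)$-action is immediate, and extends to $k[[Z(U)]]$-equivariance by continuity. Then for any $h$ and lift $f$, the identity $\lambda_z \cdot h = \Res^U_{Z(U)}(\lambda_z \cdot f)$ shows $\lambda_z \cdot h = 0$ whenever $\lambda_z \cdot f = 0$, which happens for all but finitely many $z$ by hypothesis on $\alpha$; this yields the required finiteness and completes the plan.
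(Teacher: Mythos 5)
Your argument is correct and follows essentially the same path as the paper: both reduce to diagram \eqref{f:GtoZ}, use Prop.~\ref{prop-Ard2} to identify the target of $\Omega_U$, promote the finiteness condition \eqref{f:imOmegaU} from test functions in $C^\infty(U,k)$ to $C^\infty(Z(U),k)$ via the surjective $k[[Z(U)]]$-equivariant restriction map (Lemma~\ref{lem:ResSurj}), and then conclude from Lemma~\ref{lem:DiagComm}, the isomorphism $\Phi_{C_Z}$ of Prop.~\ref{prop:commutative}, and the injectivity of $\Omega_U$, with injectivity of $\Phi_C$ coming from Prop.~\ref{prop:Phi_C}(1). The only difference is presentational: the paper introduces a diagonal map $\theta$ and argues via commuting triangles in \eqref{f:GtoZ}, whereas you chase an element $\alpha$ directly to produce the preimage $x=\Phi_{C_Z}^{-1}(\beta)$, but the logical content is identical.
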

\begin{proof}
Suppose given $(\lambda_z)_{z \in \cZ} \in \prod_{z \in \cZ} k[[Z(U)]]$ such that for all $h \in C^\infty(U,k)$, $\lambda_z(h) = 0$ for all but finitely many $z \in \cZ$. Let $f \in C^\infty(Z(U),k)$ be given and choose $h \in C^\infty(U,k)$ extending $f$ using Lemma \ref{lem:ResSurj}. Then $\lambda_z(f) = \lambda_z( \Res^U_{Z(U)}(h) ) = \Res^U_{Z(U)}( \lambda_z(g) ) = 0$ for all but finitely many $z \in \cZ$. In view of condition \eqref{f:imOmegaU}, this shows that in the diagram \eqref{f:GtoZ}, the image of $\Omega_U$ is contained in the image of $\Omega_{Z(U)}$. Write $C_Z := \ind_{Z(U)_\ell}^{Z_\ell} (C^\infty_c(Z,k))$. Since $\Omega_U$ is injective by \eqref{f:OmegaU}, this gives us an injective map $\theta$ which appears as the diagonal arrow in the diagram \eqref{f:GtoZ}:
\[
\xymatrix{ \widehat{k[Z]} \ar[rr]^-{\Phi_C} \ar[d]_{
\Phi_{C_Z}}
 && \End_{\Mod(G \times G)}(C) \ar[r]^{\subseteq} & \End_{\Mod(G \times U)}(C) \ar[dlll]_\theta \ar[d]^{\Omega_U} \\ \End_{\Mod(Z)} (
 C_Z) \ar[rrr]_-{\Omega_{Z(U)}} &&& \prod\limits_{z \in \cZ} k[[Z(U)]]}
\]
and which makes the bottom triangle commutative. Since the rectangle commutes by Lemma \ref{lem:DiagComm} and since $\Omega_{Z(U)}$ is injective, the top triangle commutes. Now, $C_Z$ is isomorphic to $C^\infty_c(Z,k)$ by \eqref{f:trans-ind}, so Prop. \ref{prop:commutative} implies that $\Phi_{C_Z}$ is an isomorphism. Since $\theta$ is injective, we can now conclude that $\Phi_C$ is surjective. But since $C$ is isomorphic to $C^\infty_c(G,k)$ in $\Mod(G)$ by \eqref{f:trans-ind}, $\Phi_C$ is also injective by Prop. \ref{prop:Phi_C}(1), and this completes the proof.
\end{proof}

\begin{theorem}\label{thm:Main}
Let $G$ be a locally pro-$p$ group which contains no proper open centralisers and let $k$ be a field of characteristic $p$. Then the natural map
\[ \Phi : \widehat{k[Z(G)]} \xrightarrow{\;\cong\;} \mathfrak{Z}(G)\]
from Lemma \ref{lem:PhiMap} is an isomorphism.
\end{theorem}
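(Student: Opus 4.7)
The plan is to assemble Theorem \ref{thm:Main} directly from the machinery already built up, so the proof reduces to a short bookkeeping exercise once Proposition \ref{prop:Main} is in hand. I would begin by using the hypothesis that $G$ is locally pro-$p$ to pick a compact open pro-$p$ subgroup $U \subseteq G$, and then set $C := \ind_{U_\ell}^{G_\ell}(C^\infty(U,k))$ as in the previous sections.

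The next step is to identify $\mathfrak{Z}(G)$ with $\End_{\Mod(G\times G)}(C)$. By Cor.\ \ref{cor:B} we already have $\mathfrak{Z}(G) = \End_{\Mod(G\times G)}(C^\infty_c(G,k))$, and under this identification the map $\Phi$ from Lemma \ref{lem:PhiMap} is $x \mapsto \Phi_{C^\infty_c(G,k)}(x)$. I would then invoke the transitivity-of-induction isomorphism \eqref{f:trans-ind}, $\phi \mapsto F_\phi$, which is $G_\ell$-equivariant by construction and, as verified in point \textbf{A)} via \eqref{f:Gr}, also intertwines the two $G_r$-actions, hence is an isomorphism in $\Mod(G\times G)$. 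Because the $\widehat{k[Z(G)]}$-action of Lemma \ref{lem:k[[ZG]]action} is defined purely in terms of the $Z(G) \subseteq G_\ell$-action, this isomorphism automatically sends $\Phi_C(x)$ to $\Phi_{C_c^\infty(G,k)}(x)$ for all $x$. Consequently $\Phi$ factors as
\[
\widehat{k[Z(G)]} \xrightarrow{\;\Phi_C\;} \End_{\Mod(G\times G)}(C) \xrightarrow{\;\cong\;} \End_{\Mod(G\times G)}(C^\infty_c(G,k)) \;=\; \mathfrak{Z}(G).
\]

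Finally I would invoke Prop.\ \ref{prop:Main}, whose hypotheses are exactly those of the theorem, to conclude that $\Phi_C$ is an isomorphism; composing with the bi-equivariant isomorphism above then yields that $\Phi$ itself is an isomorphism. I do not anticipate any real obstacle at this step: the entire substance of the argument has already been absorbed into Prop.\ \ref{prop:Main} (which in turn rests on the twisted-orbit analysis of \S\ref{sec:twisted-action} and the fixed-point density result Prop.\ \ref{FixedPoint}). The only care needed is to check that the identifications of the three objects $\mathfrak{Z}(G)$, $\End_{\Mod(G\times G)}(C_c^\infty(G,k))$, and $\End_{\Mod(G\times G)}(C)$ really do carry $\Phi$ to $\Phi_C$, and this follows at once from functoriality of the $V \mapsto \Phi_V$ construction in Lemma \ref{lem:k[[ZG]]action}.
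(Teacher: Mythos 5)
Your proposal is correct and follows the same chain of reductions the paper uses: identify $\mathfrak{Z}(G)$ with a bi-equivariant endomorphism ring, transport along the $(G\times G)$-equivariant isomorphism \eqref{f:trans-ind}, observe via naturality that $\Phi$ is carried to $\Phi_C$, and invoke Prop.\ \ref{prop:Main}. The one small difference is that you go through Cor.\ \ref{cor:B}, which rests on the full bijectivity of $\Theta$ (the nontrivial surjectivity half of Positselski's theorem), whereas the paper's proof only needs the elementary injectivity of $\Theta$: from the commutative triangle $\Theta \circ \Phi = \Phi_C$, bijectivity of $\Phi_C$ and injectivity of $\Theta$ already force $\Phi$ to be an isomorphism. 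Either route is valid given what has been proved, but the paper's version is self-consciously a little leaner in its dependencies.
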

\begin{proof}
By Prop. \ref{prop:Phi_C}(2), we have the commutative diagram
\[\xymatrix{ \widehat{k[Z(G)]} \ar[rr]^{\Phi}\ar[rrd]_{\Phi_C} && \mathfrak{Z}(G) \ar[d]_{\Theta} \\ && \End_{\Mod(G \times G)}(C).
}\]
Now $\Phi_C$ is an isomorphism by Proposition \ref{prop:Main}, and $\Theta$ is injective by the injectivity part of Prop.\ \ref{prop:Pos}. Hence $\Phi$ is an isomorphism.
\end{proof}

\begin{corollary}\label{cor:Main}
Let $G$ be a locally pro-$p$ group which contains no proper open centralisers and let $k$ be a field of characteristic $p$. Then the $k$-algebra homomorphism $\mathfrak{Z}(Z(G)) \xrightarrow{\;\cong\;} \mathfrak{Z}(G)$ from Cor.\ \ref{cor:Z-G} is an isomorphism.
\end{corollary}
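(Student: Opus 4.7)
The plan is to simply observe that the map in question, as described in Cor.\ \ref{cor:Z-G}, is a composition of two maps, both of which we have already shown to be isomorphisms under the standing hypotheses.

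More concretely, Cor.\ \ref{cor:Z-G} defines the homomorphism $\mathfrak{Z}(Z(G)) \to \mathfrak{Z}(G)$ as the composite
\[
\mathfrak{Z}(Z(G)) \xrightarrow{\;\Phi_{Z(G)}^{-1}\;} \widehat{k[Z(G)]} \xrightarrow{\;\Phi_G\;} \mathfrak{Z}(G),
\]
where $\Phi_{Z(G)}$ refers to the map constructed in Lemma \ref{lem:PhiMap} applied to the locally profinite abelian group $Z(G)$ (so that $Z(Z(G)) = Z(G)$), and $\Phi_G$ is the corresponding map for $G$ itself. So it suffices to show each factor is an isomorphism.

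For the first factor: since $Z(G)$ is commutative, Proposition \ref{prop:commutative} applies directly with $Z = Z(G)$ and shows that $\Phi_{Z(G)} : \widehat{k[Z(G)]} \xrightarrow{\cong} \mathfrak{Z}(Z(G))$ is an isomorphism; hence so is its inverse. Note that this step does not require any of the running hypotheses on $G$ beyond $Z(G)$ being a locally profinite abelian group, which follows from $G$ being locally profinite.

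For the second factor: $\Phi_G : \widehat{k[Z(G)]} \xrightarrow{\cong} \mathfrak{Z}(G)$ is precisely the content of Theorem \ref{thm:Main}, whose hypotheses (namely that $G$ is locally pro-$p$, $k$ has characteristic $p$, and $G$ contains no proper open centralisers) are exactly those of the current corollary. Composing the two isomorphisms yields the result. There is no real obstacle here; the work has all been done in establishing Thm.\ \ref{thm:Main} and Prop.\ \ref{prop:commutative}, and the corollary is a formal consequence.
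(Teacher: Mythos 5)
Your proof is correct and matches the paper's (implicit) argument exactly: the paper omits a proof body for this corollary precisely because, as you note, it follows by composing the isomorphism $\Phi_{Z(G)}^{-1}$ from Prop.\ \ref{prop:commutative} (used already in Cor.\ \ref{cor:Z-G}) with the isomorphism $\Phi_G$ from Theorem \ref{thm:Main}.
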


Finally, we specialise to the case where $G = \mathbf{G}(\mathfrak{F})$ for some connected smooth algebraic group $\mathbf{G}$ over a local nonarchimedean field $\mathfrak{F}$ of residue characteristic $p$. Certainly then $G$ is locally pro-$p$ (cf.\ \cite{Ser} Thm.\ in \S II.IV.8 and Cor.\ 2 in \S II.IV.9). We quickly remind the reader that the smoothness condition is automatic if $\mathfrak{F}$ has characteristic zero (\cite{DG} II \S 6.1.1). We need the following fact which is known to the experts. We provide a proof since we could not find one in the literature.

\begin{proposition}\label{prop:density}
   Let $X$ be a smooth scheme over the local field $\mathfrak{F}$, and let $Y \subseteq X$ be a Zariski open dense subset. Then $Y(\mathfrak{F})$ is dense in $X(\mathfrak{F})$ w.r.t.\ the valuation topology.
\end{proposition}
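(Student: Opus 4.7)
My plan is to argue locally around any given $\mathfrak{F}$-rational point $x \in X(\mathfrak{F})$ by introducing $\mathfrak{F}$-analytic étale coordinates coming from the smoothness of $X$, and then to translate the density question into the statement that a nonzero convergent power series in $n$ variables over $\mathfrak{F}$ cannot vanish throughout any valuation-neighborhood of the origin.

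First I would reduce to a convenient local setup. Smoothness of $X$ implies that $\mathcal{O}_{X,x}$ is a regular, hence integral, local ring, so $x$ lies on a unique irreducible component; after shrinking $X$ to a Zariski open affine neighborhood of $x$ inside that component, we may assume $X = \operatorname{Spec}(A)$ is integral, smooth, and affine of pure dimension $n$. Then $Z := X \setminus Y$ is the vanishing locus of some nonzero ideal $I \subseteq A$, and choosing any nonzero $f \in I$ yields $X \setminus V(f) \subseteq Y$; hence it suffices to produce points of $X(\mathfrak{F})$ arbitrarily close to $x$ in the valuation topology at which $f$ does not vanish.

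Since $x$ is rational and $X$ is smooth of dimension $n$ at $x$, one can choose $t_1,\dots,t_n \in \mathfrak{m}_x \subseteq \mathcal{O}_{X,x}$ whose images form an $\mathfrak{F}$-basis of $\mathfrak{m}_x/\mathfrak{m}_x^2$; then $\tau := (t_1,\dots,t_n)$ defines an étale morphism from some Zariski open neighborhood $X' \subseteq X$ of $x$ to $\mathbb{A}^n_{\mathfrak{F}}$ sending $x$ to $0$. The $\mathfrak{F}$-analytic inverse function theorem then supplies a valuation-open neighborhood $V$ of $x$ in $X'(\mathfrak{F})$ and a valuation-open polydisk $W \subseteq \mathfrak{F}^n$ around $0$ such that $\tau$ restricts to an $\mathfrak{F}$-analytic isomorphism $V \xrightarrow{\;\cong\;} W$. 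Transferring $f$ through this chart yields an $\mathfrak{F}$-analytic function $g := f \circ (\tau|_V)^{-1}$ on $W$ whose Taylor expansion at $0$ coincides with the image of $f$ under the chain of maps
\[
A \hookrightarrow \mathcal{O}_{X,x} \hookrightarrow \widehat{\mathcal{O}}_{X,x} \cong \mathfrak{F}[[t_1,\dots,t_n]],
\]
where the first map is injective because $A$ is a domain, the second by Krull's intersection theorem for the Noetherian local ring $\mathcal{O}_{X,x}$, and the final isomorphism is a standard consequence of $\tau$ being étale with rational target point $0$. Ensuring this compatibility between the analytic germ of $g$ at $0$ and the formal image of $f$ in $\widehat{\mathcal{O}}_{X,x}$ is the step I expect to require the most care; the rest is largely bookkeeping.

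Once the Taylor series of $g$ is known to be nonzero, let $P$ be its lowest-degree nonzero homogeneous component, of degree $d$. Then $P$ is a nonzero polynomial in $n$ variables over the infinite field $\mathfrak{F}$, so there exists $v \in \mathfrak{F}^n$ with $P(v) \neq 0$; for $s \in \mathfrak{F}^\times$ of sufficiently small absolute value one has $sv \in W$, and the ultrametric inequality gives $|g(sv)| = |s|^d\,|P(v)| > 0$. The point $x_s := \tau^{-1}(sv) \in V$ therefore satisfies $f(x_s) \neq 0$, so $x_s \in Y(\mathfrak{F})$, and $x_s \to x$ in the valuation topology as $s \to 0$, proving that $x$ lies in the closure of $Y(\mathfrak{F})$.
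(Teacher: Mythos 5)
Your proof is correct and follows essentially the same strategy as the paper: reduce to an integral affine neighbourhood, pick a nonzero regular function $f$ vanishing on the complement of $Y$, pass to an analytic chart at $x$ furnished by the smoothness (you via \'etale coordinates plus the inverse function theorem, the paper via the implicit function theorem in the form of \cite{GPR}), and then show the nonvanishing locus of the resulting convergent power series is dense near the origin. The only real divergence is at this last step: where the paper cites \cite{BGR} for the fact that the zero set of a nonzero convergent power series is nowhere dense, you supply a short self-contained argument by restricting to a generic line through the origin and using the ultrametric inequality against the lowest-degree homogeneous term; this makes the argument slightly more elementary at the cost of spelling out a routine estimate (your equality $|g(sv)| = |s|^d|P(v)|$ does hold for $|s|$ sufficiently small, but it relies on the coefficient growth bound implicit in convergence on the polydisk, which you leave tacit).
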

\begin{proof}
First we recall the elementary fact that the density of $Y$ in $X$ implies the density of $Y \cap X'$ in any open subset $X'$ of $X$ (check that any nonempty open subset of $X'$ must meet $Y$ non-trivially). Hence we may assume that $X$ is affine.

By passing to one of the finitely many connected components of $X$ we may further assume that $X$ is connected. If $X(\mathfrak{F}) = \emptyset$ then there is nothing to prove. We therefore assume that $X(\mathfrak{F}) \neq \emptyset$. Then $X$ is geometrically connected by \cite{Sta} Lemma 33.7.14. The smoothness finally implies that $X$ is geometrically irreducible. Therefore we may use the implicit function theorem in the form of \cite{GPR} Thm.\ 9.2. It says the following:

Let $x \in X(\mathfrak{F})$ be a point and let $\mathbf{t} = (t_1, \ldots, t_d)$ be a system of local parameters at $x$. Choose a sufficiently small open neighbourhood $x \in U \subseteq X$ on which the $t_i$ are defined. This gives us a morphism $U \rightarrow \mathbf{A}^d$ to the affine space over $\mathfrak{F}$ which sends $x$ to $0$. Then there exists an open neighbourhood $x \in B \subseteq U(\mathfrak{F})$ w.r.t.\ the valuation topology which is mapped by $\mathbf{t}$ homeomorphically onto a polydisk $B_r(0) \subseteq \mathbf{A}^d(\mathfrak{F})$ around $0$ of sufficiently small radius $r$.

We apply this now to a point $x \in X(\mathfrak{F}) \setminus Y(\mathfrak{F)}$. The open neighbourhood $x \in U \subseteq X$ can be chosen to be affine. By the irreducibility of $X$ the Zariski closed subset $U \setminus Y$ of $U$ cannot be equal to $U$. Hence the restriction map $\mathcal{O}(U) \rightarrow \mathcal{O}(U \setminus Y)$ is not injective. We pick a function $f \neq 0$ in the kernel. By possibly making the polydisk $B$ smaller we may view $f = F(t_1,\ldots,t_d)$ as a nonzero formal power series convergent on $B$. The zero set of this power series is nowhere dense in $B$ by the comment after Cor.\ 5 on p.\ 198 in \cite{BGR}. Hence we can find a sequence $(x_n)_{n \in \mathbb{N}}$ of points in $B \subseteq U(\mathfrak{F})$ such that
\begin{equation*}
    \text{$\lim_{n \rightarrow \infty} x_n = x$ w.r.t.\ the valuation topology and $f(x_n) \neq 0$ for any $n \in \mathbb{N}$}.
\end{equation*}
The latter says that each $x_n \in Y(\mathfrak{F})$, because $f$ vanishes on $U \setminus Y$. The former then implies that $x$ lies in the closure of $Y(\mathfrak{F})$ w.r.t.\ the valuation topology.
\end{proof}
\begin{theorem}\label{thm:open-cent} 
Let $\mathbf{G}$ be a connected smooth algebraic group over a local nonarchimedean field $\mathfrak{F}$ of residue characteristic $p$, and let $k$ be a field of characteristic $p$. Then $G = \mathbf{G}(\mathfrak{F})$ contains no proper open centralisers, and we have $ \widehat{k[Z(G)]} \cong \mathfrak{Z}(Z(G)) \cong \mathfrak{Z}(G)$.
\end{theorem}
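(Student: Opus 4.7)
The theorem has two parts: the absence of proper open centralisers in $G = \mathbf{G}(\mathfrak{F})$, and the ensuing chain of isomorphisms. The second part is essentially free once the first is in hand, in combination with what has already been established. Indeed, the local pro-$p$ hypothesis for $G$ is exactly the fact cited from Serre; $Z(G)$ is closed in $G$ (as an intersection of element-wise centralisers) and thus itself a locally profinite abelian group; Prop.\ \ref{prop:commutative} applied to $Z(G)$ then yields $\widehat{k[Z(G)]} \cong \mathfrak{Z}(Z(G))$, while Theorem \ref{thm:Main} gives $\widehat{k[Z(G)]} \cong \mathfrak{Z}(G)$ (the composition appearing also as Corollary \ref{cor:Main}). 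So the only real task is the centraliser claim.

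For this, the natural approach is to compare the abstract centraliser $C_G(g)$ of an element $g \in G$ with its scheme-theoretic counterpart $\mathbf{C} := \mathbf{C}_{\mathbf{G}}(g)$, defined as the equaliser of left and right multiplication by $g$ viewed as two morphisms $\mathbf{G} \to \mathbf{G}$. This $\mathbf{C}$ is a Zariski closed subscheme of $\mathbf{G}$ with $\mathbf{C}(\mathfrak{F}) = C_G(g)$. The plan is to show that if $C_G(g)$ is open in $G$, then necessarily $\mathbf{C} = \mathbf{G}$ as closed subschemes, which at once gives $g \in Z(\mathbf{G})(\mathfrak{F}) \subseteq Z(G)$.

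The implication will go by contradiction: suppose $\mathbf{C}$ is a proper closed subscheme of $\mathbf{G}$. Since $\mathbf{G}$ is smooth and connected and carries the identity as an $\mathfrak{F}$-point, \cite{Sta} Lemma 33.7.14 combined with smoothness forces $\mathbf{G}$ to be geometrically irreducible, mirroring the reduction already carried out inside the proof of Prop.\ \ref{prop:density}. Hence $\mathbf{G} \setminus \mathbf{C}$ is Zariski open and dense in $\mathbf{G}$, so Prop.\ \ref{prop:density} forces $G \setminus C_G(g) = (\mathbf{G} \setminus \mathbf{C})(\mathfrak{F})$ to be dense in $G$ for the valuation topology. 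But the assumed openness of $C_G(g)$ renders its complement closed in $G$; being both closed and dense, this complement would equal all of $G$, contradicting $g \in C_G(g)$. The heavy lifting has already gone into Prop.\ \ref{prop:density}; given that, the present argument is really just a one-line topological clash between \emph{closed} and \emph{dense}, with the geometric irreducibility step being the only minor verification required.
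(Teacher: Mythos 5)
Your proposal is correct and follows essentially the same route as the paper: realise the abstract centraliser $C_G(g)$ as the $\mathfrak{F}$-points of a closed subscheme $C_{\mathbf{G}}(g) \subset \mathbf{G}$, use irreducibility of $\mathbf{G}$ to see that its complement is Zariski dense whenever nonempty, and invoke Prop.\ \ref{prop:density} to get density of $G \setminus C_G(g)$ in the valuation topology, which is incompatible with $C_G(g)$ being open and nonempty. The only cosmetic differences are that you build the scheme-theoretic centraliser directly as an equaliser rather than citing \cite{DG} II \S 1 Cor.\ 3.7, and you re-derive geometric irreducibility from \cite{Sta} plus smoothness (as inside Prop.\ \ref{prop:density}) where the paper simply cites \cite{DG} II \S 5 Thm.\ 1.1.
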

\begin{proof} Suppose that the centraliser $C_G(y)$ of some $y \in G$ is proper; then $y \notin Z(G)$. Now, $C_G(y) = C_\mathbf{G}(y)(\mathfrak{F})$, where the centraliser $C_\mathbf{G}(y)$ in $\mathbf{G}$ of the closed point $y$ is a closed algebraic $\mathfrak{F}$-subgroup of $\mathbf{G}$ by \cite{DG} II \S 1 Cor.\ 3.7. Since $y \notin Z(G)$, $\mathbf{G} \setminus C_\mathbf{G}(y)$ is a nonempty Zariski open subset of $\mathbf{G}$. Since $\mathbf{G}$ is irreducible by \cite{DG} II \S 5 Thm.\ 1.1 it also is Zariski dense in $\mathbf{G}$. So we may apply Prop.\ \ref{prop:density} with $X = \mathbf{G}$ and $Y = \mathbf{G} \setminus C_\mathbf{G}(y)$ to obtain that $G \setminus C_G(y) = (\mathbf{G} \setminus C_\mathbf{G}(y))(\mathfrak{F})$ is dense in $\mathbf{G}(\mathfrak{F}) = G$. Hence $C_G(y)$ cannot be open in $G$.

The second part of the assertion now follows from Thm.\ \ref{thm:Main} and Cor.\ \ref{cor:Main}.
%The assumption implies that the Lie algebras of $C_G(y)$ and $G$ in the sense of $p$-adic Lie groups coincide. But these Lie algebras are the same as the Lie algebras of $C_\mathbf{G}(y)$ and $\mathbf{G}$, respectively, in the sense of algebraic groups, so that the latter coincide. Then \cite{DG} II \S 5 Cor.\ 5.6 implies that $C_\mathbf{G}(y) = \mathbf{G}$ and hence $C_G(y) = G$.
\end{proof}

%\begin{corollary}\label{cor:ConnAlgGp}
%Let $G = \mathbf{G}(\mathfrak{F})$ for some connected smooth algebraic group $\mathbf{G}$ over a local field $\mathfrak{F}$ of residue characteristic $p$, and let $k$ be of characteristic $p$. Then $G$ satisfies the hypotheses of Theorem \ref{thm:Main}.
%\end{corollary}
\begin{corollary}\label{cor:semisimple}
  Suppose that the connected smooth algebraic group $\mathbf{G}$ is semisimple and that $k$ has characteristic $p$; then the Bernstein center of $\Mod(G)$ is $\mathfrak{Z}(G) = k[Z(G)]$.
\end{corollary}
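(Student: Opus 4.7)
The plan is to invoke Theorem \ref{thm:open-cent} to reduce the statement to showing that the completion $\widehat{k[Z(G)]}$ collapses, that is, $\widehat{k[Z(G)]} = k[Z(G)]$. This in turn will follow once I know that $Z(G)$ is a finite group: being finite and sitting inside the Hausdorff locally profinite group $G$, it is forced to carry the discrete subspace topology, so the trivial subgroup $\{1\}$ is a compact open subgroup of $Z(G)$ and is minimal in $\Op_c(Z(G))$. The inverse system defining the completion then stabilises at its term $k[Z(G)/\{1\}] = k[Z(G)]$, and combining with the isomorphism $\mathfrak{Z}(G) \cong \widehat{k[Z(G)]}$ from Theorem \ref{thm:open-cent} gives $\mathfrak{Z}(G) = k[Z(G)]$.

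The only real content is thus the finiteness of $Z(G)$. Here I would use the standard structural fact that the scheme-theoretic center $Z(\mathbf{G})$ of a semisimple algebraic group is a finite $\mathfrak{F}$-group scheme; in particular $Z(\mathbf{G})(\mathfrak{F})$ is a finite abstract group. It then remains to identify $Z(G) = \mathbf{G}(\mathfrak{F})_{\text{centre}}$ with $Z(\mathbf{G})(\mathfrak{F})$. The inclusion $Z(\mathbf{G})(\mathfrak{F}) \subseteq Z(G)$ is immediate, and for the reverse I would replay the density argument from the proof of Theorem \ref{thm:open-cent}: if some $y \in Z(G)$ failed to lie in $Z(\mathbf{G})(\mathfrak{F})$, then $C_\mathbf{G}(y)$ would be a proper closed subscheme of the irreducible variety $\mathbf{G}$, the complement $\mathbf{G} \setminus C_\mathbf{G}(y)$ would be Zariski open and dense, and Proposition \ref{prop:density} would produce $\mathfrak{F}$-points in this complement, i.e.\ elements of $G$ that do not commute with $y$, contradicting $y \in Z(G)$.

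There is no serious obstacle beyond this: the only input from the semisimplicity hypothesis is the finiteness of $Z(\mathbf{G})$ as a group scheme, and the rest is a tautological unwinding of the definition of $\widehat{k[Z(G)]}$ in the case where $Z(G)$ is discrete. If anything deserves mild care it is the insistence on using Proposition \ref{prop:density} to pass from the finiteness of $Z(\mathbf{G})$ to the finiteness of the abstract center $Z(G)$, rather than silently asserting equality of the two centres.
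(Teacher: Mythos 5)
Your argument is correct and follows the same route as the paper, whose entire proof is the single sentence that $Z(G)$ is finite under the semisimplicity hypothesis. The extra care you take in identifying $Z(G)$ with $Z(\mathbf{G})(\mathfrak{F})$ via the density argument of Proposition \ref{prop:density}, before invoking finiteness of the scheme-theoretic center, supplies exactly the justification the paper leaves implicit.
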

\begin{proof}
Under the present assumption $Z(G)$ is finite.
\end{proof}
%{\color{Maroon} We may assume that $U$ has no nontrivial elements of finite order.} Hence $Z(G) \cap U = \{1\}$, and \colorblue{after the calculations of $\S \ref{sec:univ-endo-ring}$ and Prop. \ref{prop-Ard2}},  $\Omega_U$ induces an injective map $\mathfrak{Z}(G) \hookrightarrow \prod_{z \in Z(G)} k$. By \eqref{f:central} the composite map $k[Z(G)] \rightarrow \mathfrak{Z}(G) \hookrightarrow \prod_{z \in Z(G)} k$ is bijective. Therefore the left hand map is an isomorphism.
\section{Appendix: Positselski's Theorem}\label{sec:PosThmNewPf}

We give a proof of Prop.\ \ref{prop:Pos} which is more direct than the one in \cite{Pos}.

For the \textbf{injectivity} of $\Theta$ let $(T_V)_V \in \mathfrak{A}(G)$ such that $T_{C_c^\infty(G,k)} = 0$. Since $C_c(G/U,k)$, for any compact open subgroup $U \subseteq G$, is a $G_\ell$-subrepresentation of $C_c^\infty(G,k)$ it follows that $T_{C_c^\infty(G/U,k)} = 0$. Now consider any object $V$ in $\Mod(G)$. Any vector $v \in V$ is fixed by some compact open subgroup $U_v \subseteq G$. By Frobenius reciprocity we obtain a $G$-equivariant map $\rho_v : C_c(G/U_v,k) \rightarrow V$ sending the characteristic function of $U_v$ to $v$. We deduce that $T_{\im(\rho_v)} = 0$. By varying the vector $v$ finally obtain that $T_V = 0$.

The argument for the \textbf{surjectivity} of $\Theta$ is slightly more involved.

\begin{lemma}\label{lem:A1}
  Inside the algebra $\End_k(C_c^\infty(G,k))$ the subalgebras $\End_{\Mod(G_\ell)}(C_c^\infty(G,k))$ and $\End_{\Mod(G_r)}(C_c^\infty(G,k))$ are mutually centralisers of each other.
\end{lemma}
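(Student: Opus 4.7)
The statement is that $\cE_\ell := \End_{\Mod(G_\ell)}(C)$ and $\cE_r := \End_{\Mod(G_r)}(C)$, with $C := C_c^\infty(G,k)$, are mutually the centraliser of each other inside $\End_k(C)$. One half is formal. Since the $G_\ell$- and $G_r$-actions on $C$ commute, each right translation $g_r$ lies in $\cE_\ell$ and each left translation $g_\ell$ lies in $\cE_r$. So any $\varphi \in \End_k(C)$ centralising $\cE_r$ commutes in particular with every $g_\ell$ and thus belongs to $\cE_\ell$, and symmetrically. Hence
\[Z_{\End_k(C)}(\cE_r) \subseteq \cE_\ell \quad \text{and} \quad Z_{\End_k(C)}(\cE_\ell) \subseteq \cE_r.\]

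The main point is the reverse inclusion: every $T \in \cE_\ell$ must commute with every $S \in \cE_r$. I would verify this directly on the spanning family $\{\chi_{gU} : g \in G, \, U \in \Op_c(G)\}$ of $C$. Since $T$ preserves $U_\ell$-invariants, $T(\chi_U) \in C^{U_\ell}$ has compact support and so admits a \emph{finite} expansion $T(\chi_U) = \sum_h t_h \chi_{Uh} = \sum_h t_h \, {^{h^{-1}_r}}\chi_U$; similarly $S(\chi_{gU}) \in C^{U_r}$ admits a finite expansion $S(\chi_{gU}) = \sum_{g'} s_{g'} \chi_{g'U} = \sum_{g'} s_{g'} \, {^{g'_\ell}}\chi_U$. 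The $G_\ell$-equivariance of $T$, the $G_r$-equivariance of $S$, and the fact that $g'_\ell$ and $h^{-1}_r$ commute as operators on $C$ then together force $TS(\chi_{gU})$ and $ST(\chi_{gU})$ to unfold to the same double sum
\[\sum_{g', h} s_{g'} \, t_h \, {^{h^{-1}_r}}\chi_{g'U}.\]

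There is no real obstacle beyond bookkeeping: the identity $TS(\chi_{gU}) = ST(\chi_{gU})$ is essentially a reformulation of the commutation of left and right translation on $C$. Once this is checked on the spanning family, the two preliminary inclusions become equalities and the lemma follows.
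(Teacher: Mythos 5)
Your proposal is correct and takes essentially the same approach as the paper's proof: both establish the easy containment formally via $k[G_r]\subseteq\End_{\Mod(G_\ell)}(C)$ and $k[G_\ell]\subseteq\End_{\Mod(G_r)}(C)$, and then verify $TS=ST$ by direct computation on characteristic functions, using compact support to get finite coset expansions and the respective one-sided equivariances to move translations through $T$ and $S$. Your version is a touch cleaner in that you expand $S(\chi_{gU})$ directly in left cosets of $U$, whereas the paper expands $\beta$ on $\chara_{xUx^{-1}}$ and works with the conjugate subgroup; the underlying bookkeeping and the resulting double sum $\sum_{g',h}s_{g'}t_h\,\chi_{g'Uh}$ are the same.
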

\begin{proof}
Since $k[G_r] \subseteq \End_{\Mod(G_\ell)}(C_c^\infty(G,k))$ and $k[G_\ell] \subseteq \End_{\Mod(G_r)}(C_c^\infty(G,k))$ it is immediate that the centraliser of one algebra is contained in the other algebra. For the opposite inclusions we have to show that $\beta \circ \alpha = \alpha \circ \beta$ for any $\alpha \in \End_{\Mod(G_\ell)}(C_c^\infty(G,k))$ and $\beta \in \End_{\Mod(G_r)}(C_c^\infty(G,k))$. Since
\begin{equation*}
   \bigcup_U C_c(U\backslash G,k) = C_c^\infty(G,k) = \bigcup_U C_c(G/U,k) \ ,
\end{equation*}
where $U$ runs over all compact open subgroups of $G$, both homomorphisms $\alpha$ and $\beta$ are completely determined by their values on the characteristic functions $\chara_U$. Moreover, $\alpha(\chara_U) \in C_c(U\backslash G,k)$ and $\beta(\chara_U) \in C_c(G/U,k)$. We have to check that $\beta \circ \alpha (\chara_{xU}) = \alpha \circ \beta (\chara_{xU})$ for any $U$ and any $x \in G$. Let $\alpha(\chara_U) = \sum_i c_i \chara_{U g_i}$ and $\beta(\chara_{xUx^{-1}}) = \sum_j d_j \chara_{h_j xUx^{-1}}$ for finitely many constants $c_i, d_j \in k$ and elements $g_i, h_j \in G$ (depending on $x$ and $U$). We compute
\begin{align*}
  \beta \circ \alpha (\chara_{xU}) & = \beta( \sum_i c_i \chara_{xU g_i}) = \sum_i c_i \beta(\chara_{xU g_i}) = \sum_i c_i \beta(\chara_{xUx^{-1} x g_i}) \\
        & = \sum_i c_i \beta({^{( x g_i)^{-1}_r}\chara}_{xUx^{-1}}) = \sum_i c_i {^{( x g_i)^{-1}_r}\beta}(\chara_{xUx^{-1}}) =   \\
        & = \sum_{i, j} c_i d_j {^{( x g_i)^{-1}_r} \chara}_{h_j xUx^{-1}} = \sum_{i, j} c_i d_j \chara_{h_j xUg_i}
\end{align*}
using that $\alpha$ is $G_\ell$-equivariant, resp.\ $\beta$ is $G_r$-equivariant, in the first, resp.\ fifth, equality. Correspondingly
\begin{align*}
  \alpha \circ \beta (\chara_{xU}) & = \alpha(\beta({^{x^{-1}_r} \chara}_{xUx^{-1}})) = \alpha({^{x^{-1}_r} \beta}(\chara_{xUx^{-1}})) = \alpha( {^{x^{-1}_r} \sum_j} d_j \chara_{h_jxUx^{-1}} )   \\
       & = \sum_j d_j \alpha( \chara_{h_jxU} ) = \sum_{i, j} c_i d_j \chara_{h_jxUg_i} \ . \qedhere
\end{align*}
\end{proof}

\begin{lemma}\label{lem:A2}
  Suppose that $U_i$, for $i = 1, 2$, are compact open subgroups of $G$; any map $\rho \in \Hom_{\Mod(G)}(C_c(G/U_1,k), C_c(G/U_2,k))$ is the restriction of a map $\tilde{\rho} \in \End_{\Mod(G_\ell)}(C_c^\infty(G,k))$.
\end{lemma}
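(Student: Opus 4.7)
The plan is to reduce the extension problem to a $U_{1,\ell}$-equivariant one via Frobenius reciprocity, and then exploit the fact that $C^\infty(U_1,k) = \ind_{\{1\}}^{U_{1,\ell}}(k)$ is injective in $\Mod(U_{1,\ell})$.

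First, I would use \eqref{f:trans-ind} to identify $C_c^\infty(G,k) \cong \ind_{U_{1,\ell}}^{G_\ell}(C^\infty(U_1,k))$, under which the submodule $C_c(G/U_1,k) = \ind_{U_{1,\ell}}^{G_\ell}(k)$ corresponds to the subrepresentation of constant functions $k \hookrightarrow C^\infty(U_1,k)$. Applying Frobenius reciprocity \eqref{f:frob-rec} with codomain $C_c^\infty(G,k)$, producing a $G_\ell$-equivariant endomorphism $\tilde\rho$ extending $\rho$ is equivalent to producing a $U_{1,\ell}$-equivariant map $\tilde\rho^\flat : C^\infty(U_1,k) \to C_c^\infty(G,k)$ whose restriction to the subspace of constants equals $\rho^\flat : k \to C_c^\infty(G,k)$, $1 \mapsto f_0 := \rho(\chara_{U_1})$.

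Second, I would decompose $C_c^\infty(G,k)$ as a $U_{1,\ell}$-representation: because left translation by $U_1$ preserves each left coset $U_1 g$, one obtains
\[ C_c^\infty(G,k) = \bigoplus_{U_1 g \in U_1 \backslash G} C^\infty(U_1 g,k), \]
and each summand is $U_{1,\ell}$-isomorphic to $C^\infty(U_1,k)$ after choosing a representative $g$. Since $f_0 \in C_c^\infty(G,k)^{U_{1,\ell}}$ has compact support and is $U_{1,\ell}$-invariant, it must have the form $f_0 = \sum_{i=1}^n c_i\, \chara_{U_1 g_i}$ for finitely many cosets $U_1 g_i$ and scalars $c_i \in k$; in particular, $\rho^\flat$ factors through the finite partial sum $\bigoplus_{i=1}^n C^\infty(U_1 g_i,k)$.

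Third, I would construct $\tilde\rho^\flat$ component-wise: for each $i$, define $\tilde\rho^\flat_i : C^\infty(U_1,k) \to C^\infty(U_1 g_i,k)$ by $\tilde\rho^\flat_i(f) := c_i \cdot f^{g_i}$, where $f^{g_i}$ is the function on $G$ supported on $U_1 g_i$ with $f^{g_i}(u g_i) := f(u)$ for $u \in U_1$. A short direct calculation verifies that each $\tilde\rho^\flat_i$ is $U_{1,\ell}$-equivariant (it amounts to $f^{g_i}(v^{-1} u g_i) = f(v^{-1}u)$ for $v \in U_1$), and the sum $\tilde\rho^\flat := \sum_{i=1}^n \tilde\rho^\flat_i$ satisfies $\tilde\rho^\flat(1) = f_0$ and hence extends $\rho^\flat$. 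Transporting $\tilde\rho^\flat$ back through \eqref{f:frob-rec} yields the required $\tilde\rho \in \End_{\Mod(G_\ell)}(C_c^\infty(G,k))$. The underlying conceptual point — which is the obstacle bypassed — is that $C^\infty(U_1,k)$ is $U_{1,\ell}$-injective (by Frobenius reciprocity $\Hom_{\Mod(U_{1,\ell})}(-, C^\infty(U_1,k)) = \Hom_k(-,k)$), so that the extension can be carried out summand by summand; this sidesteps the fact that the inclusion $k \hookrightarrow C^\infty(U_1,k)$ admits no $U_{1,\ell}$-equivariant splitting when $\chara k$ divides $[U_1 : V]$ for some open subgroup $V \subseteq U_1$.
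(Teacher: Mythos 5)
Your proof is correct, and it reaches the result by a genuinely different route than the paper. The paper's argument stays entirely on the $G_\ell$-induced side: it identifies $\End_{\Mod(G_\ell)}(C_c^\infty(G,k)) = \varprojlim_{U \subseteq U_1} C_c(U\backslash G,k)$ (with transition maps $\chara_{U'g} \mapsto \chara_{Ug}$) and simply exhibits $\tilde\rho$ as the compatible family $(\sum_i c_i\chara_{Ug_i})_U$. You instead pass through \eqref{f:trans-ind} and Frobenius reciprocity once, reducing to a $U_{1,\ell}$-equivariant extension problem $k \hookrightarrow C^\infty(U_1,k)$ targeting $C_c^\infty(G,k)$, and solve it via the coset decomposition $C_c^\infty(G,k) = \oplus_{U_1g} C^\infty(U_1g,k)$ in $\Mod(U_{1,\ell})$, constructing the extension summand by summand. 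A short check shows the two constructions produce the \emph{same} endomorphism $\tilde\rho$; your presentation trades the projective-limit bookkeeping for a more module-theoretic picture and makes the underlying injectivity of $C^\infty(U_1,k)$ visible, which is a nice conceptual addition. One small caveat: writing $C^\infty(U_1,k) = \ind_{\{1\}}^{U_{1,\ell}}(k)$ is a slight abuse of the paper's notation, since $\ind$ denotes \emph{compact} induction and $\{1\}$ is not open in the profinite group $U_1$; what you mean is the full smooth induction $\Ind_{\{1\}}^{U_{1,\ell}}(k)$ (equivalently, the smooth left-regular representation). Since you do not actually invoke injectivity in the construction, this does not affect the correctness of the argument.
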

\begin{proof}
By Frobenius reciprocity we have
\begin{equation*}
  \Hom_{\Mod(G)}(C_c(G/U_1,k), C_c(G/U_2,k)) = C_c(U_1 \backslash G/U_2,k)
\end{equation*}
with $\rho$ corresponding to $\rho(\chara_{U_1})$ and
\begin{align*}
  \End_{\Mod(G_\ell)}(C_c^\infty(G,k)) & = \Hom_{\Mod(G_\ell)}( \bigcup_{U \subseteq U_1} C_c(G/U,k), C_c^\infty(G,k))   \\
                        & = \varprojlim_{U \subseteq U_1} \Hom_{\Mod(G_\ell)}(C_c(G/U,k), C_c^\infty(G,k))   \\
                        & = \varprojlim_{U \subseteq U_1} C_c(U \backslash G,k)
\end{align*}
with $\tilde{\rho}$ corresponding to $(\tilde{\rho}(\chara_U))_U$. The transition map $C_c(U' \backslash G,k) \rightarrow C_c(U \backslash G,k)$, for $U' \subseteq U$, in the projective system is given by $\chara_{U' g} \mapsto \chara_{Ug}$. Suppose that $\rho(\chara_{U_1}) = \sum_i c_i \chara_{U_1 g_i}$. Then $(\sum_i c_i \chara_{U g_i})_U$ defines a $\tilde{\rho}$ as in the assertion which restricts to $\rho$.
\end{proof}

We now fix a $T \in \End_{\Mod(G_r)}(C_c^\infty(G,k))$, and we have to construct a $(T_V)_V \in \mathfrak{A}(G)$ such that $T_{C_c^\infty(G,k)} = T$. Of course, we turn the latter condition into the definition $T_{C_c^\infty(G,k)} := T$ and extend it in several steps to all of $\Mod(G)$.

\textit{Step 1:} Let $\mathfrak{C}_0$ denote the full subcategory of $\Mod(G)$ with the single object $C_c^\infty(G,k)$. Then Lemma \ref{lem:A1} tells us that $T_{C_c^\infty(G,k)} = T$ is a natural transformation of the forgetful functor $\mathfrak{C}_0 \rightarrow \Vect$.

\textit{Step 2:} Let $\mathfrak{C}_1$ denote the full subcategory of $\Mod(G)$ with objects representations of the form $C_c(G/U,k)$ for $U \subseteq G$ some compact open subgroup. We have $C_c(G/U,k) = C_c^\infty(G,k)^{U_r} \subseteq C_c^\infty(G,k)$. The map $T$ being $G_r$-equivariant it respects the subrepresentation $C_c(G/U,k)$, and we may define $T_{C_c(G/U,k)} := T | C_c(G/U,k)$. It follows from Lemma \ref{lem:A2} and Step 1 that this defines a natural transformation of the forgetful functor $\mathfrak{C}_1 \rightarrow \Vect$.

\textit{Step 3:} Let $\mathfrak{C}$ denote the full subcategory of $\Mod(G)$ with objects arbitrary direct sums of objects in $\mathfrak{C}_1$. If $V = \oplus_{i \in I} V_i$ with $V_i$ in $\mathfrak{C}_1$ then we simply define $T_V := \oplus_{i \in I} T_{V_i}$. Let $V' = \oplus_{j \in J} V'_j$ be another object in $\mathfrak{C}$ with $V'_j$ in $\mathfrak{C}_1$ and let $\gamma : V \rightarrow V'$ be a homomorphism in $\Mod(G)$. There are homomorphisms $\gamma_{i,j} : V_i \rightarrow V'_j$ in $\Mod(G)$ such that $\gamma((v_i)_i) = (\sum_i \gamma_{i,j}(v_i))_j$. Using Step 2 in the fourth equality we compute
\begin{align*}
  T_{V'} \circ \gamma ((v_i)_i) & = T_{V'}((\sum_i \gamma_{i,j}(v_i))_j) = ( T_{V'_j}(\sum_i \gamma_{i,j}(v_i)))_j = ( \sum_i T_{V'_j}(\gamma_{i,j}(v_i)))_j   \\
               & = ( \sum_i \gamma_{i,j}(T_{V_i}(v_i)))_j = \gamma ((T_{V_i}(v_i))_i) = \gamma \circ T_V ((v_i)_i) \ .
\end{align*}
This shows that our definition extends $T$ further to a natural transformation of the forgetful functor $\mathfrak{C} \rightarrow \Vect$.

\textit{Step 4:} In the proof of injectivity we have seen already that for any $V$ in $\Mod(G)$ there is a surjective homomorphism $C \twoheadrightarrow V$ in $\Mod(G)$ with $C$ in $\mathfrak{C}$.

\textit{Step 5:} Consider a surjective homomorphism $\beta : C \twoheadrightarrow V$  as well as a surjective homomorphism $\gamma : C' \twoheadrightarrow \ker(\beta)$ as in Step 4. By Step 3 we then have the commutative diagram
\begin{equation*}
  \xymatrix{
    C' \ar[d]_{T_{C'}} \ar@{->>}[r] & \ker(\beta)  \ar[r]^-{\subseteq} & C \ar[d]^{T_C} \\
    C' \ar@{->>}[r] & \ker(\beta) \ar[r]^-{\subseteq} & C.   }
\end{equation*}
This shows that $T_C$ respects $\ker(\beta)$ and induces therefore a map $T_V$ on $V$. We first check that $T_V$ is independent of the choices. For this let $\beta_i : C_i \twoheadrightarrow V$, for $i = 1, 2$, be two surjective homomorphisms as in Step 4. At first we consider the case that there is a homomorphism $\gamma : C_1 \rightarrow C_2$ in $\Mod(G)$ such that $\beta_2 \circ \gamma = \beta_1$. Let $\tau_i$ be the linear endomorphism of $V$ induced by $T_{C_i}$. Given a $v \in V$ we choose a preimage $x \in C_1$ such that $\beta_1(x) = v$. Using Step 3 in the third equality we compute
\begin{equation*}
  \tau_1(v) = \beta_1(T_{C_1}(x)) = \beta_2 \circ \gamma \circ T_{C_1}(x) = \beta_2 \circ T_{C_2} \circ \gamma(x) = \tau_2(\beta_2(\gamma(x))) = \tau_2(\beta_1(x)) = \tau_2(v)  .
\end{equation*}
In general without the presence of a $\gamma$ we consider the commutative diagram
\begin{equation*}
  \xymatrix{
  C_1 \ar[r] \ar[dr]_{\beta_1} & C_1 \oplus C_2 \ar[d]^(0.4){\beta_1 + \beta_2} & C_2 \ar[l] \ar[dl]^{\beta}  \\
                & V .                 }
\end{equation*}
with the horizontal maps being the canonical ones and apply the previous case to both halves.

\textit{Step 6:} Finally we have to show that the thus defined $T_V$ constitute a natural transformation of the forgetful functor $\Mod(G) \rightarrow \Vect$, i.e., we have to check that, for any homomorphism $\gamma : V_1 \rightarrow V_2$ in $\Mod(G)$ the diagram
\begin{equation*}
  \xymatrix{
    V_1 \ar[d]_{T_{V_1}} \ar[r]^{\gamma} & V_2 \ar[d]^{T_{V_2}} \\
    V_1 \ar[r]^{\gamma} & V_2   }
\end{equation*}
is commutative. We choose two surjective homomorphisms $\beta_i : C_i \twoheadrightarrow V_i$, for $i = 1, 2$, as in Step 4 and obtain the commutative diagram
\begin{equation*}
  \xymatrix{
    C_1 \ar@{->>}[d]_{\beta_1} \ar[r]^-{\subseteq} & C_1 \oplus C_2 \ar@{->>}[d]^{\gamma \circ \beta_1 + \beta_2} \\
    V_1 \ar[r]^{\gamma} & V_2 .   }
\end{equation*}
It then follows from Step 3 that $\gamma \circ T_{V_1} = T_{V_2} \circ \gamma$.

\end{document}